\theoremstyle{plain}
\newtheorem{theorem}{Theorem}[section]
\newtheorem{corollary}[theorem]{Corollary}
\newtheorem{lemma}[theorem]{Lemma}
\newtheorem{proposition}[theorem]{Proposition}
\newtheorem{exa}[theorem]{Example}
\newenvironment{ack}{\noindent{\bf Acknowledgments}}
\theoremstyle{definition}
\newtheorem{definition}[theorem]{Definition}
\newcommand{\vol}{{\rm vol}}
\newcommand{\cs}{{\rm cs}}
\newcommand{\li}{{\rm Li}_2}
\newcommand{\modulo}{~~({\rm mod}~\pi^2)}
\begin{document}

\title{Reidemeister transformations of the potential function and the solution
}
\author{\sc Jinseok Cho and Jun Murakami}
\maketitle
\begin{abstract}
The potential function of the optimistic limit of the colored Jones polynomial and the construction of the solution of the hyperbolicity equations
were defined in the authors' previous articles. In this article, we define the Reidemeister transformations of the
potential function and the solution by the changes of them under the Reidemeister moves of the link diagram
and show the explicit formulas. 
These two formulas enable us to see the changes of the complex volume
formula under the Reidemeister moves. As an application,
we can simply specify the discrete faithful representation of the link group by
showing a link diagram and one geometric solution.
\end{abstract}

\section{Introduction}\label{sec1}

\subsection{Overview}

One of the fundamental theorem of knot theory is the Reidemeister theorem, which states
two links are equivalent if and only if their diagrams are related by finite steps of the Reidemeister moves.
Therefore, one of the most natural method to obtain a knot invariant is to define a value from a knot diagram
and show that the value is invariant under the Reidemeister moves. 
However, some invariants cannot be defined in this way, especially the ones defined from the hyperbolic structure of the link.
This is because, the hyperbolicity equations\footnote{{\it Hyperbolicity equations} are the gluing equations of the edges of a given triangulation
together with the equation of the completeness condition. Each solution of the equations determines a boundary-parabolic representation 
and some of them determines the hyperbolic structure of the link.}
and their solutions do not change locally under the Reidemeister moves. 
Especially, when a solution of certain hyperbolicity equations is given,
we cannot see how the equations and the solution change under the Reidemeister moves. 
This is one of the major obstructions to develop a combinatorial approach to the hyperbolic invariants of links.

On the other hand, {the optimistic limit method was first introduced at \cite{Murakami00b}.
Although this method was not defined rigorously, the resulting value was {\it optimistically} expected to be the actual limit
of certain quantum invariants. The rigorous definition of the optimistic limit of the Kashaev invariant 
was proposed at \cite{Yokota10} and the resulting value was proved to be the complex volume of the knot. 
Although this definition is rigorous and general enough, 
they requires some unnatural assumptions on the diagram and several technical difficulties.
Therefore it was modified to more combinatorial version at \cite{Cho13a}.
The optimistic limit used in this article is the one defined at \cite{Cho13b} and the main results are based on \cite{Cho14c}.

In our definition, the triangulation is naturally defined from the link diagram
and its hyperbolicity equations, whose solutions determine the boundary-parabolic representations\footnote{
A representation $\rho:\pi_1(L)\rightarrow {\rm PSL}(2,\mathbb{C})$ of the link group $\pi_1(L):=\pi_1(\mathbb{S}^3\backslash L)$ is {\it boundary-parabolic} when any meridian loop of the boundary-tori of the link complement $\mathbb{S}^3\backslash L$ maps to a parabolic element in ${\rm PSL}(2,\mathbb{C})$ under $\rho$.} of the link group, 
are the partial derivatives 
$\exp(w_k\frac{\partial W}{\partial w_k})=1$ $(k=1,\ldots,n)$ of certain potential function $W(w_1,\ldots,w_n)$.
Note that this potential function is combinatorially defined from the link diagram, 
so it changes naturally under the Reidemeister moves.
Then the optimistic limit is defined by the evaluation of the potential function (with slight modification) at certain solution of the hyperbolicity equations. (Explicit definition is the equation (\ref{optimistic}).)

Let $\mathcal{P}$ be the conjugation quandle consisting of the parabolic elements 
of ${\rm PSL}(2,\mathbb{C})$ proposed at \cite{Kabaya14}.
The shadow-coloring of $\mathcal{P}$ is a way of assigning elements of $\mathcal{P}$ to arcs and regions of the link diagram. The elements on arcs are naturally determined from $\rho$ and the ones on the regions are from certain rules. 
According to \cite{Kabaya14} and \cite{Cho14c},
we can construct the developing map of a given boundary-parabolic representation $\rho:\pi_1(L)\rightarrow {\rm PSL}(2,\mathbb{C})$ directly from the shadow-coloring. 
(The explicit construction is in Figure \ref{fig06}. Note that this construction is based on \cite{Neumann99} and \cite{Zickert09}.)}

This construction of the solution has two major advantages. At first, if a boundary-parabolic representation $\rho$ is given, then we can always
construct the solution corresponding to $\rho$ for any link diagram. (This was the main theorem of \cite{Cho14c}.) In other words,
for the hyperbolicity equations of our triangulation, we can always guarantee the existence of a geometric solution,\footnote{
{\it Geometric solution} is a solution of the hyperbolicity equations which determines the discrete faithful representation.
(Unlike the standard definition, we allow some tetrahedron can have negative volume.
If we consider the triangulation of $\mathbb{S}^3\backslash (L\cup\{\text{two points}\}$), 
the negative volume tetrahedra are unavoidable.) Note that geometric solution in our context is not unique.} which is an assumption in many other texts.
Furthermore, the constructed solution changes locally under the Reidemeister moves on the link diagram $D$.
Note that the variables $w_1,\ldots,w_n$ of the hyperbolicity equations are assinged to regions of the diagram.
(See Section \ref{sec12} below.)
Assume the solution $(w_1^{(0)},\ldots,w_n^{(0)})$ is constructed from the diagram $D$ together with the representation $\rho$. 

{
\begin{definition} 
A solution $(w_1^{(0)},\ldots,w_n^{(0)})$ is called {\it essential} when 
$w_k^{(0)}\neq 0$ for all $k=1,\ldots,n$ and $w_k^{(0)}\neq w_m^{(0)}$ for the pairs $w_k^{(0)}$ and $w_m^{(0)}$
assigned to adjacent regions of the diagram $D$.
\end{definition}

According to Lemma \ref{lem}, essentialness of a solution is generic property, so we can always construct uncountably many essential solutions from any $D$ and $\rho$. From now on, solutions in this article are always assumed to be essential
and the Reidemeister transformations are defined between two essential solutions.
(This assumption is guaranteed by Corollary \ref{cor32}.)
Note that essentialness of the solution guarantees that the shape parameters defined in Section \ref{sec33}
are not in $\{0,1,\infty\}$.


Let $D'$ be the link diagram obtained by applying one Reidemeister move to $D$.}
In this article, we will show that if a new variable $w_{n+1}$ is appeared in $D'$, then 
the values $w_1^{(0)},\ldots,w_n^{(0)}$ of the newly constructed solution from $D'$ and $\rho$ are preserved and
the value $w_{n+1}^{(0)}$ is uniquely determined by the other values $w_1^{(0)},\ldots,w_n^{(0)}$. 
(The explicit relations are in Section \ref{sec4}.)
Also, if a region with $w_k$ is removed in $D'$, then we can easily get the solution by removing the value $w_k^{(0)}$ of the variable $w_k$.
These changes of the solution will be called {\it the Reidemeister transformations of the solution} in Section \ref{sec12}.

Using the Reidemeister transformations of the potential function together with the solution,
we can see how the complex volume formula changes under the Reidemeister moves.
(See Theorem \ref{thm1}.)
As an application, we can easily specify the discrete faithful representation by showing one link diagram $D$ and one geometric solution corresponding to the diagram. 
In particular, if we have another diagram $D'$ of $L$, then we can easily find the geometric solution corresponding to $D'$,
{without solving the hyperbolicity equations again,} by applying the Reidemeister transformations of the solution.

Many results of the optimistic limit and other concepts used in this article are scattered in the authors' previous articles.
Referring all of them might be quite confusing for readers,
so we added many known results here, especially in Sections \ref{sec2}-\ref{sec3}, 
and sometimes we reprove the known results to clarify the discussion.

\subsection{Reidemeister transformations}\label{sec12}

To describe the exact definition of the Reidemeister transformation, we have to define the potential function first.
Consider a link diagram\footnote{
We always assume the diagram $D$ does not contain a trivial knot component which has only over-crossings or under-crossings or no crossing.
If it happens, we change the diagram of the trivial component slightly by adding a kink.
} $D$ of a link $L$ and assign complex variables $w_1,\ldots,w_n$ to regions of $D$. Then we define the potential function of a crossing $j$ as in Figure \ref{pic01}.

\begin{figure}[h]
\setlength{\unitlength}{0.4cm}
\subfigure[Positive crossing]{
  \begin{picture}(35,6)\thicklines
    \put(6,5){\vector(-1,-1){4}}
    \put(2,5){\line(1,-1){1.8}}
    \put(4.2,2.8){\vector(1,-1){1.8}}
    \put(3.5,1){$w_a$}
    \put(5.5,3){$w_b$}
    \put(3.5,4.5){$w_c$}
    \put(1.5,3){$w_d$}
    \put(8,3){$\longrightarrow$}
    \put(11,4){$W^j:=-\li(\frac{w_c}{w_b})-\li(\frac{w_c}{w_d})+\li(\frac{w_a w_c}{w_b w_d})+\li(\frac{w_b}{w_a})+\li(\frac{w_d}{w_a})$}
    \put(15,2){$-\frac{\pi^2}{6}+\log\frac{w_b}{w_a}\log\frac{w_d}{w_a}$}
    \put(3.6,2){$j$}
  \end{picture}}\\
\subfigure[Negative crossing]{
  \begin{picture}(35,6)\thicklines
    \put(2,5){\vector(1,-1){4}}
    \put(6,5){\line(-1,-1){1.8}}
    \put(3.8,2.8){\vector(-1,-1){1.8}}
    \put(3.5,1){$w_a$}
    \put(5.5,3){$w_b$}
    \put(3.5,4.5){$w_c$}
    \put(1.5,3){$w_d$}
    \put(8,3){$\longrightarrow$}
    \put(11,4){$W^j:=\li(\frac{w_c}{w_b})+\li(\frac{w_c}{w_d})-\li(\frac{w_a w_c}{w_b w_d})-\li(\frac{w_b}{w_a})-\li(\frac{w_d}{w_a})$}
    \put(15,2){$+\frac{\pi^2}{6}-\log\frac{w_b}{w_a}\log\frac{w_d}{w_a}$}
        \put(3.6,2){$j$}
  \end{picture}}
  \caption{Potential function of the crossing $j$}\label{pic01}
\end{figure}

{In the definition above, $\li(z):=-\int_0^z \frac{\log(1-t)}{t}dt$ is the dilogarithm function. 
Although it is a multi-valued function depending on the choice of the branches of $\log t$ and $\log(1-t)$,
the final formula in (\ref{optimistic}) does not depend on choice of the branches. (See Lemma 2.1's of \cite{Cho13a} and \cite{Cho13c}).}

{\it The potential function} of $D$ is defined by
\begin{equation}\label{defW}
W(w_1,\ldots,w_n):=\sum_{j\text{ : crossings of }D}W^j,
\end{equation}
and we modify it to
\begin{equation}\label{defW_0}
W_0(w_1,\ldots,w_n):=W(w_1,\ldots,w_n)-\sum_{k=1}^n \left(w_k\frac{\partial W}{\partial w_k}\right)\log w_k.
\end{equation}

Also, we define the set of equations
\begin{equation}\label{defH}
\mathcal{I}:=\left\{\left.\exp\left(w_k\frac{\partial W}{\partial w_k}\right)=1\right|k=1,\ldots,n\right\}.
\end{equation}
Then $\mathcal{I}$ becomes the set of the hyperbolicity equations of the five-term triangulation
defined in Section \ref{sec2}. (See Proposition \ref{pro21}.)

Consider a boundary-parabolic representation $\rho:\pi_1(L)\rightarrow{\rm PSL}(2,\mathbb{C})$. Then, using the shadow-coloring of
$\mathcal{P}$ induced by $\rho$, we can construct the solution $\bold w^{(0)}=(w_1^{(0)},\ldots,w_n^{(0)})$ of $\mathcal{I}$ satisfying
$\rho_{\bold w^{(0)}}=\rho$ up to conjugation, where $\rho_{\bold w^{(0)}}$ is the representation induced 
by the five-term triangulation together with the solution $\bold w^{(0)}$. (The detail is in Section \ref{sec3}.
See Proposition \ref{pro2}. {We will also show that any solution of $\mathcal{I}$ can be constructed by this method in Appendix \ref{app}.}) Furthermore, the solution satisfies
\begin{equation}\label{optimistic}
W_0(\bold w^{(0)})\equiv i(\vol(\rho)+i\,\cs(\rho))~~({\rm mod}~\pi^2),
\end{equation}
where $\vol(\rho)$ and $\cs(\rho)$ are the hyperbolic volume and the Chern-Simons invariant of $\rho$, respectively, 
{which were defined in \cite{Neumann04} and \cite{Zickert09}.}
We call $\vol(\rho)+i\,\cs(\rho)$ {\it the (hyperbolic) complex volume of $\rho$}
and define {\it the optimistic limit of the colored Jones polynomial} by $W_0(\bold w^{(0)})$.

The oriented Reidemeister moves defined in \cite{Polyak10} are in Figure \ref{pic02}. 
We define the potential functions $V_{R1}$, $V_{R1'}$, $V_{R_2}$, $V_{R2'}$, $V_{R3}$ and $V_{R3'}$ of Figure \ref{pic02} as follows:
\begin{align*}
V_{R1}(w_a,w_b,w_c)=&\li(\frac{w_b}{w_a})+\li(\frac{w_b}{w_c})-\li(\frac{w_b^2}{w_a w_c})\\
  &-\li(\frac{w_a}{w_b})-\li(\frac{w_c}{w_b})+\frac{\pi^2}{6}-\log\frac{w_a}{w_b}\log\frac{w_c}{w_b},
  \end{align*}
\begin{align*}
V_{R1'}(w_a,w_b,w_c)=&-\li(\frac{w_b}{w_a})-\li(\frac{w_b}{w_c})+\li(\frac{w_b^2}{w_a w_c})\\
  &+\li(\frac{w_a}{w_b})+\li(\frac{w_c}{w_b})-\frac{\pi^2}{6}+\log\frac{w_a}{w_b}\log\frac{w_c}{w_b},
\end{align*}
\begin{align*}
V_{R2}&(w_a,w_b,w_c,w_d,w_e)=\\
  &\li(\frac{w_a}{w_d})+\li(\frac{w_a}{w_e})-\li(\frac{w_a w_c}{w_d w_e})-\li(\frac{w_d}{w_c})-\li(\frac{w_e}{w_c})-\log\frac{w_d}{w_c}\log\frac{w_e}{w_c}\\
  &-\li(\frac{w_c}{w_b})-\li(\frac{w_c}{w_e})+\li(\frac{w_a w_c}{w_b w_e})+\li(\frac{w_b}{w_a})+\li(\frac{w_e}{w_a})+\log\frac{w_b}{w_a}\log\frac{w_e}{w_a},
  \end{align*}
\begin{align*}
V_{R2'}&(w_a,w_b,w_c,w_d,w_e)=\\
  &\li(\frac{w_c}{w_d})+\li(\frac{w_c}{w_e})-\li(\frac{w_a w_c}{w_d w_e})-\li(\frac{w_d}{w_a})-\li(\frac{w_e}{w_a})-\log\frac{w_d}{w_a}\log\frac{w_e}{w_a}\\
  &-\li(\frac{w_a}{w_b})-\li(\frac{w_a}{w_e})+\li(\frac{w_a w_c}{w_b w_e})+\li(\frac{w_b}{w_c})+\li(\frac{w_e}{w_c})+\log\frac{w_b}{w_c}\log\frac{w_e}{w_c},
\end{align*}
\begin{align*}
V_{R3}&(w_a,w_b,w_c,w_d,w_e,w_f,w_h)=-\frac{\pi^2}{2}\\
  &-\li(\frac{w_e}{w_d})-\li(\frac{w_e}{w_f})+\li(\frac{w_e w_h}{w_d w_f})+\li(\frac{w_d}{w_h})+\li(\frac{w_f}{w_h})+\log\frac{w_d}{w_h}\log\frac{w_f}{w_h}\\
  &-\li(\frac{w_f}{w_a})-\li(\frac{w_f}{w_h})+\li(\frac{w_b w_f}{w_a w_h})+\li(\frac{w_a}{w_b})+\li(\frac{w_h}{w_b})+\log\frac{w_a}{w_b}\log\frac{w_h}{w_b}\\
  &-\li(\frac{w_h}{w_b})-\li(\frac{w_h}{w_d})+\li(\frac{w_c w_h}{w_b w_d})+\li(\frac{w_b}{w_c})+\li(\frac{w_d}{w_c})+\log\frac{w_b}{w_c}\log\frac{w_d}{w_c},
  \end{align*}
\begin{align*}
V_{(R3)^{-1}}&(w_a,w_b,w_c,w_d,w_e,w_f,w_g)=-\frac{\pi^2}{2}\\
  &-\li(\frac{w_f}{w_a})-\li(\frac{w_f}{w_e})+\li(\frac{w_g w_f}{w_a w_e})+\li(\frac{w_a}{w_g})+\li(\frac{w_e}{w_g})+\log\frac{w_a}{w_g}\log\frac{w_e}{w_g}\\
  &-\li(\frac{w_e}{w_d})-\li(\frac{w_e}{w_g})+\li(\frac{w_c w_e}{w_d w_g})+\li(\frac{w_d}{w_c})+\li(\frac{w_g}{w_c})+\log\frac{w_d}{w_c}\log\frac{w_g}{w_c}\\
  &-\li(\frac{w_g}{w_a})-\li(\frac{w_g}{w_c})+\li(\frac{w_b w_g}{w_a w_c})+\li(\frac{w_a}{w_b})+\li(\frac{w_c}{w_b})+\log\frac{w_a}{w_b}\log\frac{w_c}{w_b}.
\end{align*}
Note that $V_{R1}$ is the potential function of the diagram obtained after applying $R1$ move in Figure \ref{pic02}(a). 
All others are defined in the same ways,
for example, $V_{R3}$ and $V_{R3'}$ are the potential functions of the right-hand and the left-hand sides of Figure \ref{pic02}(c), respectively.

\begin{figure}
\centering
\subfigure[First moves]{\includegraphics[scale=1.2]{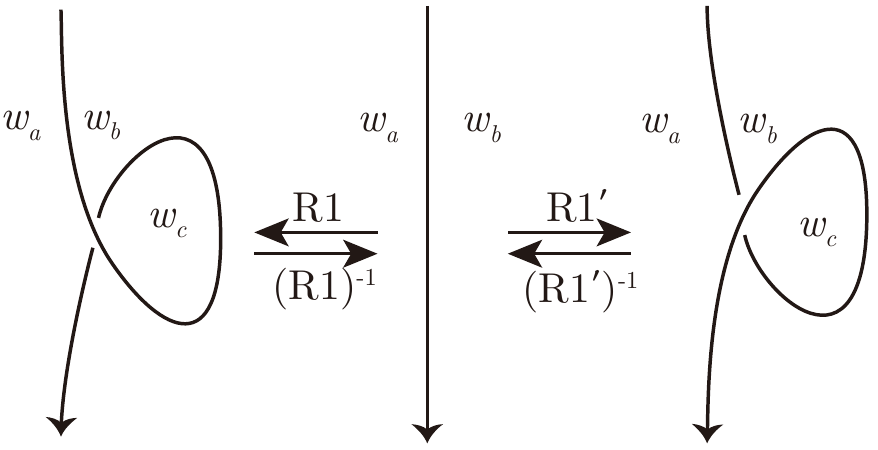}}\\
\subfigure[Second moves]{\includegraphics[scale=1.2]{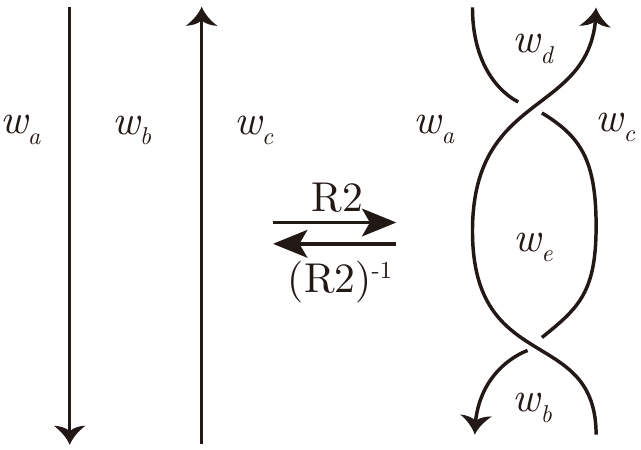}\hspace{0.5cm}\includegraphics[scale=1.2]{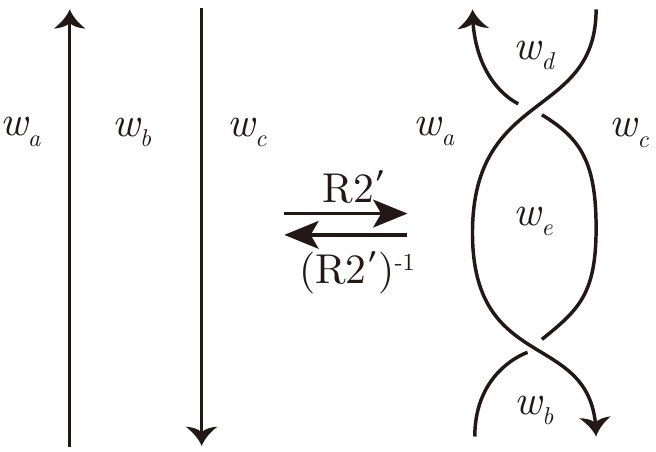}}\\
\subfigure[Third move]{\includegraphics[scale=1.2]{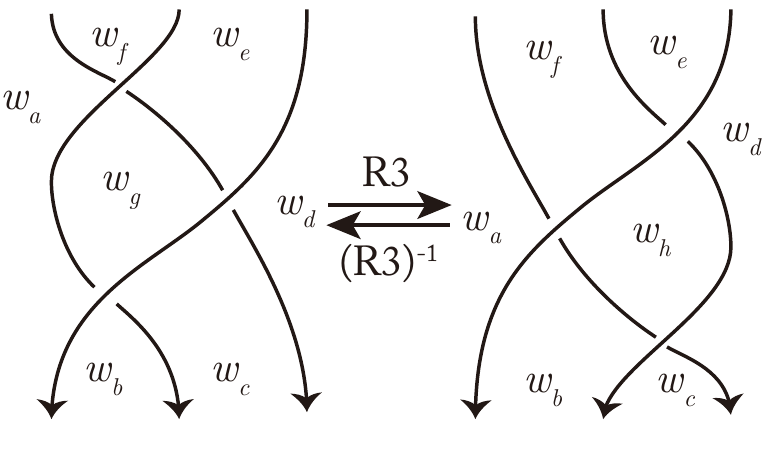}}
 \caption{Oriented Reidemeister moves}\label{pic02}
\end{figure}

\begin{definition}\label{def1}
{\it The Reidemeister transformations}\label{def1} $T_{R1}, T_{(R1)^{-1}},...,T_{R3}, T_{(R3)^{-1}}$ of the potential function $W(\ldots,w_a,w_b,\ldots)$ are defined as follows:
\begin{align}
T_{R1}(W)(\ldots,w_a,w_b,w_c,\ldots)&=W+V_{R1},\nonumber\\
T_{(R1)^{-1}}(W)(\ldots,w_a,w_b,\ldots)&=W-V_{R1},\nonumber\\
T_{R1'}(W)(\ldots,w_a,w_b,w_c,\ldots)&=W+V_{R1'},\nonumber\\
T_{(R1')^{-1}}(W)(\ldots,w_a,w_b,\ldots)&=W-V_{R1'},\nonumber\\
T_{R2}(W)(\ldots,w_a,w_b,w_c,w_d,w_e,\ldots)&=W+V_{R2},\label{R2a}\\
T_{(R2)^{-1}}(W)(\ldots,w_a,w_b,w_c,\ldots)&=W-V_{R2},\label{R2b}\\
T_{R2'}(W)(\ldots,w_a,w_b,w_c,w_d,w_e,\ldots)&=W+V_{R2'},\label{R2c}\\
T_{(R2')^{-1}}(W)(\ldots,w_a,w_b,w_c,\ldots)&=W-V_{R2'},\label{R2d}\\
T_{R3}(W)(\ldots,w_a,w_b,w_c,w_d,w_e,w_f,w_h,\ldots)&=W+V_{R3}-V_{(R3)^{-1}},\nonumber\\
T_{(R3)^{-1}}(W)(\ldots,w_a,w_b,w_c,w_d,w_e,w_f,w_g,\ldots)&=W-V_{R3}+V_{(R3)^{-1}}.\nonumber
\end{align}
Note that, when applying $R2$ (or $R2'$) move in (\ref{R2a}) (or (\ref{R2c})), we replace $w_b$ of $W$ with $w_d$ for the potential functions of the crossings adjacent to the region associated with $w_d$.
Also, when applying $(R2)^{-1}$ (or $(R2')^{-1}$) move in (\ref{R2b}) (or (\ref{R2d})), we replace $w_d$ of $W$ with $w_b$.
\end{definition}

Remark that the Reidemeister transformations of the potential function is nothing but the changes of the potential function defined in (\ref{defW})
under the corresponding Reidemeister moves.

%
%

\begin{definition}
{\it The Reidemeister transformations}\label{def1} $T_{R1}, T_{(R1)^{-1}},...,T_{R3}, T_{(R3)^{-1}}$ of the {essential} solution
$(\ldots,w_a^{(0)},w_b^{(0)},\ldots)$ of $\mathcal{I}$ in (\ref{defH}) is defined as follows: for the first Reidemeister moves
\begin{align*}
T_{R1}(\ldots,w_a^{(0)},w_b^{(0)},\ldots)=T_{R1'}(\ldots,w_a^{(0)},w_b^{(0)},\ldots)
=(\ldots,w_a^{(0)},w_b^{(0)},w_c^{(0)},\ldots),
\end{align*}
where $w_c^{(0)}=2w_b^{(0)}-w_a^{(0)}$, and
\begin{align*}
T_{(R1)^{-1}}(\ldots,w_a^{(0)},w_b^{(0)},w_c^{(0)},\ldots)=T_{(R1')^{-1}}(\ldots,w_a^{(0)},w_b^{(0)},w_c^{(0)},\ldots)
=(\ldots,w_a^{(0)},w_b^{(0)},\ldots).
\end{align*}

For the second Reidemeister moves, we put $T_{R2}(W)$ (or $T_{R2'}(W)$) be the potential function 
in (\ref{R2a}) (or (\ref{R2c})). Then{
\begin{align*}
T_{R2}(\ldots,w_a^{(0)},w_b^{(0)},w_c^{(0)},\ldots)=(\ldots,w_a^{(0)},w_b^{(0)},w_c^{(0)},w_d^{(0)},w_e^{(0)},\ldots)\\
\left(\text{or }T_{R2'}(\ldots,w_a^{(0)},w_b^{(0)},w_c^{(0)},\ldots)=(\ldots,w_a^{(0)},w_b^{(0)},w_c^{(0)},w_d^{(0)},w_e^{(0)},\ldots)\right),
\end{align*}
where $w_d^{(0)}=w_b^{(0)}$ and $w_e^{(0)}$ is uniquely determined by the equation 
\begin{equation}\label{TR2}
\exp(w_b\frac{\partial T_{R2}(W)}{\partial w_b})=1 \left(\text{or } \exp(w_b\frac{\partial T_{R2'}(W)}{\partial w_b})=1\right),
\end{equation}
and
\begin{align*}
T_{(R2)^{-1}}(\ldots,w_a^{(0)},w_b^{(0)},w_c^{(0)},w_d^{(0)},w_e^{(0)},\ldots)=(\ldots,w_a^{(0)},w_b^{(0)},w_c^{(0)},\ldots)\\
\left(\text{or } 
T_{(R2')^{-1}}(\ldots,w_a^{(0)},w_b^{(0)},w_c^{(0)},w_d^{(0)},w_e^{(0)},\ldots)=(\ldots,w_a^{(0)},w_b^{(0)},w_c^{(0)},\ldots)\right).
\end{align*}
Note that the equation (\ref{TR2}) can be expressed explicitly
by using the parameters around the region of $w_b$. 
(Explicit expression of (\ref{TR2}) is in Lemma \ref{lem53}.)}

For the third Reidemeister moves,
\begin{align*}
&T_{R3}(\ldots,w_a^{(0)},w_b^{(0)},w_c^{(0)},w_d^{(0)},w_e^{(0)},w_f^{(0)},w_g^{(0)},\ldots)\\
&~~~=(\ldots,w_a^{(0)},w_b^{(0)},w_c^{(0)},w_d^{(0)},w_e^{(0)},w_f^{(0)},w_h^{(0)},\ldots),\\
&T_{(R3)^{-1}}(\ldots,w_a^{(0)},w_b^{(0)},w_c^{(0)},w_d^{(0)},w_e^{(0)},w_f^{(0)},w_h^{(0)},\ldots)\\
&~~~=(\ldots,w_a^{(0)},w_b^{(0)},w_c^{(0)},w_d^{(0)},w_e^{(0)},w_f^{(0)},w_g^{(0)},\ldots),
\end{align*}
where $w_h^{(0)}$ or $w_g^{(0)}$ is uniquely determined by the equation 
\begin{equation}\label{R3rel}
w_d^{(0)} w_g^{(0)}-w_c^{(0)} w_e^{(0)}=w_a^{(0)} w_h^{(0)} -w_b^{(0)} w_f^{(0)}.
\end{equation}

\end{definition}

\begin{theorem}\label{thm1} 
For a link diagram $D$ of $L$, let $\bold w=(\ldots,w_a,w_b,\ldots)$ and put $W(\bold w)$ be the potential function of $D$. 
Consider a boundary-parabolic representation $\rho:\pi_1(L)\rightarrow{\rm PSL}(2,\mathbb{C})$
and let $\bold w^{(0)}$ be the solution of $\mathcal{I}$ constructed by the shadow-coloring of $\mathcal{P}$ 
satisfying $\rho_{\bold w^{(0)}}=\rho$, up to conjugation. (See Proposition \ref{pro2} for the actual construction of $\bold w^{(0)}$.)
Then, for any Reidemeister transformation $T$, $T(\bold w^{(0)})$ is also a solution\footnote{
The resulting solution of a Reidemeister transformation on an essential solution can be nonessential.
However, essential solutions are generic, so we can deform both solutions to essential ones by changing the region-colorings slightly. (See Corollary \ref{cor32}.)
Therefore, we assume the solutions are always essential.}
of $\mathcal{I}$ and the induced representation $\rho_{T(\bold w^{(0)})}$ satisfies $\rho_{T(\bold w^{(0)})}=\rho$, up to conjugation.
Furthermore,
\begin{equation}\label{volume}
T(W)_0(T(\bold w^{(0)}))\equiv W_0(\bold w^{(0)}))\equiv i(\vol(\rho)+i\,\cs(\rho))~~({\rm mod}~\pi^2),
\end{equation}
where $T(W)_0$ is the modification of the potential function $T(W)$ by (\ref{defW_0}).
\end{theorem}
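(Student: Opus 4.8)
The plan is to verify the three assertions of the theorem---that $T(\bold w^{(0)})$ solves $\mathcal{I}$, that $\rho_{T(\bold w^{(0)})}=\rho$, and that the complex volume is preserved modulo $\pi^2$---by checking each of the finitely many Reidemeister transformations $T_{R1},\ldots,T_{(R3)^{-1}}$ separately. Since by the remark after Definition \ref{def1} the transformed potential $T(W)$ is literally the potential function of the new diagram $D'$, the equation system $\mathcal{I}'$ for $D'$ is exactly $\{\exp(w_k\partial T(W)/\partial w_k)=1\}$. So ``$T(\bold w^{(0)})$ is a solution'' means verifying these partial-derivative equations at the prescribed new values. For each move I would split the variables into three groups: the untouched variables $w_k^{(0)}$ (which appear in both $W$ and $T(W)$), the variables whose defining equation changes because a crossing was added or deleted near their region, and the genuinely new variable ($w_c$ for $R1$, $w_e$ for $R2$, $w_h$ or $w_g$ for $R3$).

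For the untouched and the modified old variables I would compute $\partial V_{R\ast}/\partial w_k$ and show, using the explicit dilogarithm identities together with the derivative rule $\frac{d}{dz}\li(z)=-\frac{\log(1-z)}{z}$, that the extra contribution either vanishes identically or combines with the old equation to reproduce the correct equation for $D'$. The defining relation for the new variable is built into the transformation itself: for $R2$ it is exactly the equation \eqref{TR2}, and for $R3$ it is the linear relation \eqref{R3rel}; so for those one only needs to confirm that this prescription is consistent with all remaining equations of $\mathcal{I}'$ and that it determines the new value uniquely (here essentialness guarantees no degeneracy, via Corollary \ref{cor32}). For $R1$ the new value $w_c^{(0)}=2w_b^{(0)}-w_a^{(0)}$ should likewise be checked to satisfy the one new equation and to leave the adjacent equations intact.

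Next I would verify $\rho_{T(\bold w^{(0)})}=\rho$. The cleanest route is to invoke the construction of Proposition \ref{pro2}: the representation $\rho_{\bold w}$ is read off from the shadow-coloring / developing map attached to the solution, and a local Reidemeister move changes the triangulation only inside a ball. Since the meridians and the gluing data outside that ball are unchanged, and the new tetrahedra glue up to a region of the developing map carrying the same holonomy, the induced representation is unchanged up to conjugation. Concretely I would match the shape parameters of the new tetrahedra (which by the essentialness assumption lie outside $\{0,1,\infty\}$, as noted in Section \ref{sec33}) against those produced directly from $\rho$ and appeal to the rigidity of the developing map construction of \cite{Kabaya14} and \cite{Cho14c}.

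Finally, for the volume formula \eqref{volume}, the point is that $W_0(\bold w^{(0)})$ already equals $i(\vol(\rho)+i\,\cs(\rho))\pmod{\pi^2}$ by \eqref{optimistic}, and the same holds for $T(W)_0(T(\bold w^{(0)}))$ because $T(\bold w^{(0)})$ is a solution realizing the same $\rho$; so the right-hand equivalence is automatic from the already-cited Proposition \ref{pro2}, and the real content is the middle equality $T(W)_0(T(\bold w^{(0)}))\equiv W_0(\bold w^{(0)})$. I would prove this by direct evaluation of the added potential $V_{R\ast}$ at the solution: on a solution, $W_0$ and $W$ differ only by the terms $w_k(\partial W/\partial w_k)\log w_k$, which vanish since $w_k\partial W/\partial w_k\in 2\pi i\mathbb{Z}$, so it suffices to show the \emph{value} of $V_{R\ast}$ (for $R1,R2$) or of $V_{R3}-V_{(R3)^{-1}}$ (for $R3$) at the constructed solution is $\equiv 0\pmod{\pi^2}$. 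This reduces to five-term-type dilogarithm identities; I expect the $R3$ case to be the main obstacle, since it requires a genuine five-term (Abel) identity relating the three ``before'' tetrahedra to the three ``after'' tetrahedra with the linear constraint \eqref{R3rel}, and the $\log\cdot\log$ and constant $-\pi^2/2$ terms must be tracked carefully modulo $\pi^2$ to close the identity; the branch-independence guaranteed by the cited Lemma 2.1's of \cite{Cho13a} and \cite{Cho13c} is what makes this computation well-defined.
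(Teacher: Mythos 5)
Your proposal diverges from the paper's proof in a way that leaves the central step unproved. The paper does not verify the equations of $\mathcal{I}'$ analytically at all: it proves Theorem \ref{thm1} by identifying $T(\bold w^{(0)})$ with the shadow-coloring solution of the \emph{new} diagram $D'$. Since a Reidemeister move does not change the arc- and region-colors away from the move, Proposition \ref{pro2} applied to $D'$ already produces a solution agreeing with $\bold w^{(0)}$ on all old variables; Lemmas \ref{LemR1}, \ref{LemR2} and \ref{LemR3} then show that the new value $\det(p,s_{\rm new})$ of this constructed solution satisfies exactly the defining formula of the transformation ($w_c^{(0)}=2w_b^{(0)}-w_a^{(0)}$ via the Cayley--Hamilton identity $A^2-2A+I=0$ for the parabolic matrix $A$; the linear equation (\ref{TR2}) for R2; the determinant identity behind (\ref{R3rel}) for R3), and uniqueness of the new value forces $T(\bold w^{(0)})$ to \emph{be} the constructed solution. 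After that, every assertion --- solution of $\mathcal{I}'$, $\rho_{T(\bold w^{(0)})}=\rho$, and both equivalences in (\ref{volume}) --- is one application of Proposition \ref{pro2} to $D'$ and one to $D$, the middle equality following by transitivity since both extremes equal $i(\vol(\rho)+i\,\cs(\rho))$. Your plan never computes the new values from the quandle data, so your step ``confirm that this prescription is consistent with all remaining equations of $\mathcal{I}'$'' is precisely the unproved crux; and your appeal to Proposition \ref{pro2} for $T(\bold w^{(0)})$ (``because it is a solution realizing the same $\rho$'') is circular, because \ref{pro2}'s volume identity is stated only for solutions of the form $w_k^{(0)}=\det(p,s_k)$, which is what has to be established (short of invoking the Appendix \ref{app} theorem, which you do not).

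There is also a concrete computational error in your volume argument. At a solution one only knows $w_k\frac{\partial W}{\partial w_k}\in 2\pi i\,\mathbb{Z}$, so $W-W_0=\sum_k\bigl(w_k\frac{\partial W}{\partial w_k}\bigr)\log w_k$ is a sum of terms $2\pi i\,m_k\log w_k$, which are \emph{not} $\equiv 0 \pmod{\pi^2}$ in general --- the entire purpose of the modification (\ref{defW_0}) is that these terms contribute nontrivially. Hence your reduction ``it suffices to show the value of $V_{R*}$ at the solution is $\equiv 0\pmod{\pi^2}$'' is invalid: the integers $m_k$ attached to the old variables adjacent to the move can change when a crossing is added or removed, so even a proved five-term identity for the R3 case would not close the computation. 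Finally, note the internal inconsistency: if, as you assert, the right-hand equivalence of (\ref{volume}) for $T(W)_0(T(\bold w^{(0)}))$ were automatic, the middle equality would follow trivially by transitivity, so your plan to prove it separately via dilogarithm identities is either unnecessary or a symptom of the circularity above. The paper's route requires no dilogarithm identity whatsoever.
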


\begin{proof}
This follows from Lemma \ref{LemR1}, Lemma \ref{LemR2}, Lemma \ref{LemR3} and Proposition \ref{pro2}.
\end{proof}

Note that when some orientations of the strings in Figure \ref{pic02} are reversed,
the Reidemeister transformations of the solutions can be defined by the exactly same formula.
(It will be proved in Section \ref{sec5}.)
If we change the potential function according to the changes of the orientation, then Theorem \ref{thm1} still works. 
Therefore, it defines the {\it un-oriented} Reidemeister transformations\footnote{
The Reidemeister triansformations of the potential function still depend on the orientation.
As a matter of fact, it is possible to define the potential function of the un-oriented diagram using Section 3.2 of \cite{Cho13b}.
However, the formula will be redundantly complicate than the one defined in this article, so we do not introduce it.} of the solutions. We will discuss and prove the un-oriented ones in Section \ref{sec5}.
Also, the mirror images of the Reidemeister moves will be discussed in Section \ref{sec5}.

As an example of the Reidemeister transformations, we will show the changes of the geometric solution
of a diagram $D$ of the figure-eight knot to its mirror image $\overline{D}$ in Section \ref{sec6}.

\section{Five-term triangulation of $\mathbb{S}^3\backslash (L\cup\{\text{two points}\})$}\label{sec2}

In this section, we describe the five-term triangulation of $\mathbb{S}^3\backslash (L\cup\{\text{two points}\})$.
Many parts of explanation come from \cite{Cho13c}.

We place an octahedron ${\rm A}_j{\rm B}_j{\rm C}_j{\rm D}_j{\rm E}_j{\rm F}_j$ 
on each crossing $j$ of the link diagram as in Figure \ref{twistocta} 
so that the vertices ${\rm A}_j$ and ${\rm C}_j$ lie on the over-bridge and
the vertices ${\rm B}_j$ and ${\rm D}_j$ on the under-bridge of the diagram, respectively. 
Then we twist the octahedron by gluing edges ${\rm B}_j{\rm F}_j$ to ${\rm D}_j{\rm F}_j$ and
${\rm A}_j{\rm E}_j$ to ${\rm C}_j{\rm E}_j$, respectively. The edges ${\rm A}_j{\rm B}_j$, ${\rm B}_j{\rm C}_j$,
${\rm C}_j{\rm D}_j$ and ${\rm D}_j{\rm A}_j$ are called {\it horizontal edges} and we sometimes express these edges
in the diagram as arcs around the crossing in the left-hand side of Figure \ref{twistocta}.

\begin{figure}[h]
\centering
\begin{picture}(6,8)  
  \setlength{\unitlength}{0.8cm}\thicklines
        \put(4,5){\arc[5](1,1){360}}
    \put(6,7){\line(-1,-1){4}}
    \put(2,7){\line(1,-1){1.8}}
    \put(4.2,4.8){\line(1,-1){1.8}}
    \put(2.2,3.9){${\rm A}_j$}
    \put(5.3,3.9){${\rm B}_j$}
    \put(5.3,5.9){${\rm C}_j$}
    \put(2.2,5.9){${\rm D}_j$}
    \put(4.3,5){$j$}
  \end{picture}\hspace{2cm}
\includegraphics[scale=0.5]{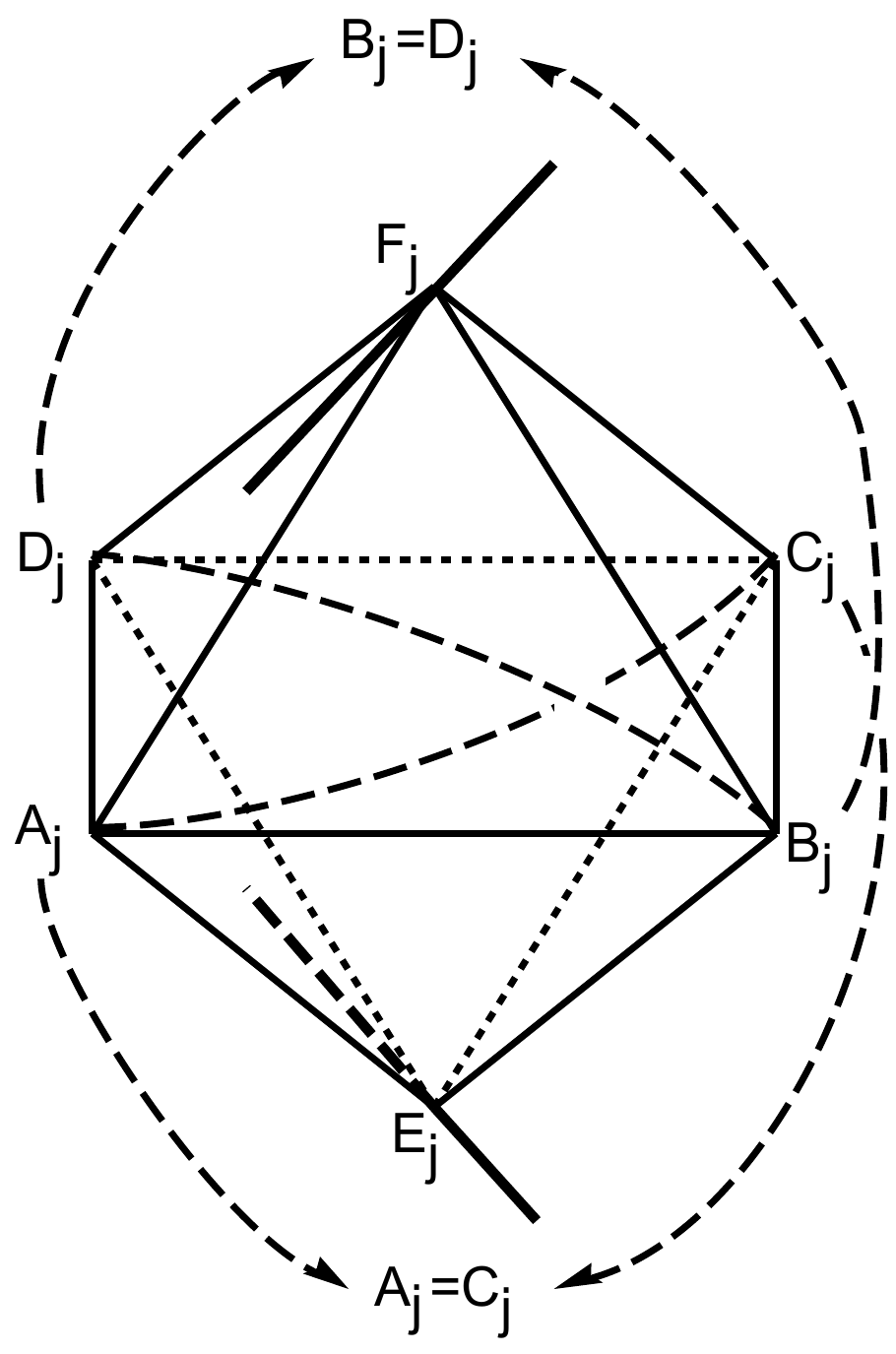}
 \caption{Octahedron on the crossing $j$}\label{twistocta}
\end{figure}

Then we glue faces of the octahedra following the edges of the link diagram. 
Specifically, there are three gluing patterns as in Figure \ref{glue pattern}.
In each cases (a), (b) and (c), we identify the faces
$\triangle{\rm A}_{j}{\rm B}_{j}{\rm E}_{j}\cup\triangle{\rm C}_{j}{\rm B}_{j}{\rm E}_{j}$ to
$\triangle{\rm C}_{j+1}{\rm D}_{j+1}{\rm F}_{j+1}\cup\triangle{\rm C}_{j+1}{\rm B}_{j+1}{\rm F}_{j+1}$,
$\triangle{\rm B}_{j}{\rm C}_{j}{\rm F}_{j}\cup\triangle{\rm D}_{j}{\rm C}_{j}{\rm F}_{j}$ to
$\triangle{\rm D}_{j+1}{\rm C}_{j+1}{\rm F}_{j+1}\cup\triangle{\rm B}_{j+1}{\rm C}_{j+1}{\rm F}_{j+1}$
and
$\triangle{\rm A}_{j}{\rm B}_{j}{\rm E}_{j}\cup\triangle{\rm C}_{j}{\rm B}_{j}{\rm E}_{j}$ to
$\triangle{\rm C}_{j+1}{\rm B}_{j+1}{\rm E}_{j+1}\cup\triangle{\rm A}_{j+1}{\rm B}_{j+1}{\rm E}_{j+1}$,
respectively. 

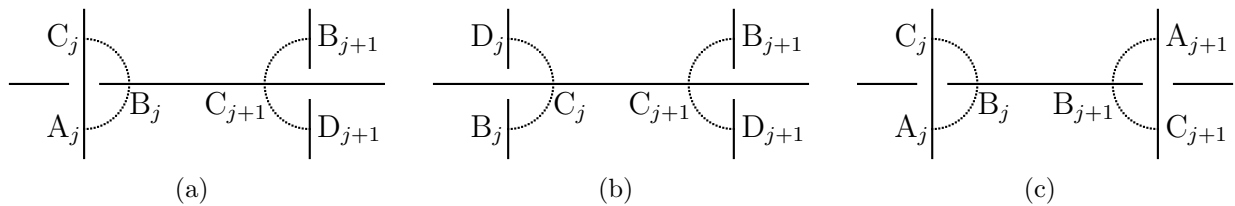
\begin{figure}[ht]
\centering
  \subfigure[]
  {\begin{picture}(5,2)\thicklines
   \put(1,1){\arc[5](0,-0.6){180}}
   \put(4,1){\arc[5](0,0.6){180}}
   \put(1.2,1){\line(1,0){3.8}}
   \put(1,2){\line(0,-1){2}}
   \put(4,0){\line(0,1){0.8}}
   \put(4,2){\line(0,-1){0.8}}
   \put(0.8,1){\line(-1,0){0.8}}
   \put(0.5,0.3){${\rm A}_j$}
   \put(1.6,0.6){${\rm B}_j$}
   \put(0.5,1.5){${\rm C}_j$}
   \put(4.1,0.3){${\rm D}_{j+1}$}
   \put(2.6,0.6){${\rm C}_{j+1}$}
   \put(4.1,1.5){${\rm B}_{j+1}$}
  \end{picture}}\hspace{0.5cm}
  \subfigure[]
  {\begin{picture}(5,2)\thicklines
   \put(1,1){\arc[5](0,-0.6){180}}
   \put(4,1){\arc[5](0,0.6){180}}
   \put(5,1){\line(-1,0){5}}
   \put(1,2){\line(0,-1){0.8}}
   \put(1,0){\line(0,1){0.8}}
   \put(4,2){\line(0,-1){0.8}}
   \put(4,0){\line(0,1){0.8}}
   \put(0.5,0.3){${\rm B}_j$}
   \put(1.6,0.6){${\rm C}_j$}
   \put(0.5,1.5){${\rm D}_j$}
   \put(4.1,0.3){${\rm D}_{j+1}$}
   \put(2.6,0.6){${\rm C}_{j+1}$}
   \put(4.1,1.5){${\rm B}_{j+1}$}
  \end{picture}}\hspace{0.5cm}
  \subfigure[]
  {\begin{picture}(5,2)\thicklines
   \put(1,1){\arc[5](0,-0.6){180}}
   \put(4,1){\arc[5](0,0.6){180}}
   \put(4.2,1){\line(1,0){0.8}}
   \put(1,2){\line(0,-1){2}}
   \put(0.8,1){\line(-1,0){0.8}}
   \put(4,2){\line(0,-1){2}}
   \put(1.2,1){\line(1,0){2.6}}
   \put(0.5,0.3){${\rm A}_j$}
   \put(1.6,0.6){${\rm B}_j$}
   \put(0.5,1.5){${\rm C}_j$}
   \put(4.1,0.3){${\rm C}_{j+1}$}
   \put(2.6,0.6){${\rm B}_{j+1}$}
   \put(4.1,1.5){${\rm A}_{j+1}$}
  \end{picture}}
  \caption{Three gluing patterns}\label{glue pattern}
\end{figure}

Note that this gluing process identifies vertices $\{{\rm A}_j, {\rm C}_j\}$ to one point, denoted by $-\infty$,
and $\{{\rm B}_j, {\rm D}_j\}$ to another point, denoted by $\infty$, and finally $\{{\rm E}_j, {\rm F}_j\}$ to
the other points, denoted by ${\rm P}_k$ where $k=1,\ldots,s$ and $s$ is the number of the components of the link $L$. 
The regular neighborhoods of $-\infty$ and $\infty$ are 3-balls and that of $\cup_{k=1}^s P_k$ is
{cone over the tori of the link $L$.}
Therefore, if we remove the vertices ${\rm P}_1,\ldots,{\rm P}_s$ from the octahedra, 
then we obtain a decomposition of $\mathbb{S}^3\backslash L$, denoted by $T$.
On the other hand, if we remove all the vertices of the octahedra, 
the result becomes an ideal decomposition of $\mathbb{S}^3\backslash (L\cup\{\pm\infty\})$.
We call the latter {\it the octahedral decomposition} and denote it by $T'$.

To obtain an ideal triangulation from $T'$, we divide each octahedron 
${\rm A}_j{\rm B}_j{\rm C}_j{\rm D}_j{\rm E}_j{\rm F}_j$ in Figure \ref{twistocta} into five ideal tetrahedra
${\rm A}_j{\rm B}_j{\rm D}_j{\rm F}_j$, ${\rm B}_j{\rm C}_j{\rm D}_j{\rm F}_j$,
${\rm A}_j{\rm B}_j{\rm C}_j{\rm D}_j$, ${\rm A}_j{\rm B}_j{\rm C}_j{\rm E}_j$
and ${\rm A}_j{\rm C}_j{\rm D}_j{\rm E}_j$.
We call the result {\it the five-term triangulation} of $\mathbb{S}^3\backslash (L\cup\{\pm\infty\})$.

Note that if we assign the shape parameter $u\in\mathbb{C}\backslash\{0,1\}$ to an edge of an ideal hyperbolic tetrahedron,
then the other edges are also parametrized by $u, u':=\frac{1}{1-u}$ and $u'':=1-\frac{1}{u}$
as in Figure \ref{fig6}.

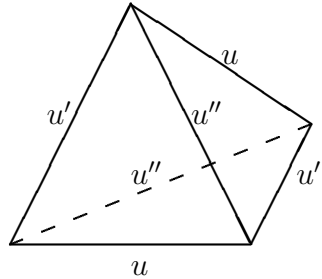
\begin{figure}[H]
\begin{center}
  {\setlength{\unitlength}{0.4cm}
  \begin{picture}(12,10)\thicklines
   \put(1,1){\line(1,0){8}}
   \put(1,1){\line(1,2){4}}
   \put(5,9){\line(1,-2){4}}
   \put(9,1){\line(1,2){2}}
   \put(5,9){\line(3,-2){6}}
   \dashline{0.5}(1,1)(11,5)
   \put(5,0){$u$}
   \put(8,7){$u$}
   \put(2.2,5){$u'$}
   \put(10.5,3){$u'$}
   \put(5,3){$u''$}
   \put(7,5){$u''$}
  \end{picture}}
  \caption{Parametrization of an ideal tetrahedron with a shape parameter $u$}\label{fig6}
\end{center}
\end{figure}
To determine the shape of the octahedron in Figure \ref{twistocta}, 
we assign shape parameters to edges of tetrahedra as in Figure \ref{fig7}. 
Note that $\frac{w_a w_c}{w_b w_d}$ in Figure \ref{fig7}(a) and $\frac{w_b w_d}{w_a w_c}$ in Figure \ref{fig7}(b)
are the shape parameters of the tetrahedron ${\rm A}_j{\rm B}_j{\rm C}_j{\rm D}_j$ assigned to the edges 
${\rm B}_j{\rm D}_j$ and ${\rm A}_j{\rm C}_j$. Also note that the assignment of shape parameters here does not
depend on the orientations of the link diagram.

\begin{figure}[H]
\centering
  \subfigure[Positive crossing $j$]
  {\begin{picture}(6,7)  
  \setlength{\unitlength}{0.8cm}\thicklines
        \put(4,5){\arc[5](1,1){360}}
    \put(6,7){\vector(-1,-1){4}}
    \put(2,7){\line(1,-1){1.8}}
    \put(4.2,4.8){\vector(1,-1){1.8}}
    \put(3.7,3.2){$w_a$}
    \put(5.7,5){$w_b$}
    \put(3.7,6.7){$w_c$}
    \put(1.7,5){$w_d$}
    \put(2.2,3.9){${\rm A}_j$}
    \put(5.3,3.9){${\rm B}_j$}
    \put(5.3,5.9){${\rm C}_j$}
    \put(2.2,5.9){${\rm D}_j$}
    \put(4.3,5){$j$}
  \end{picture}\hspace{2cm}\includegraphics[scale=0.6]{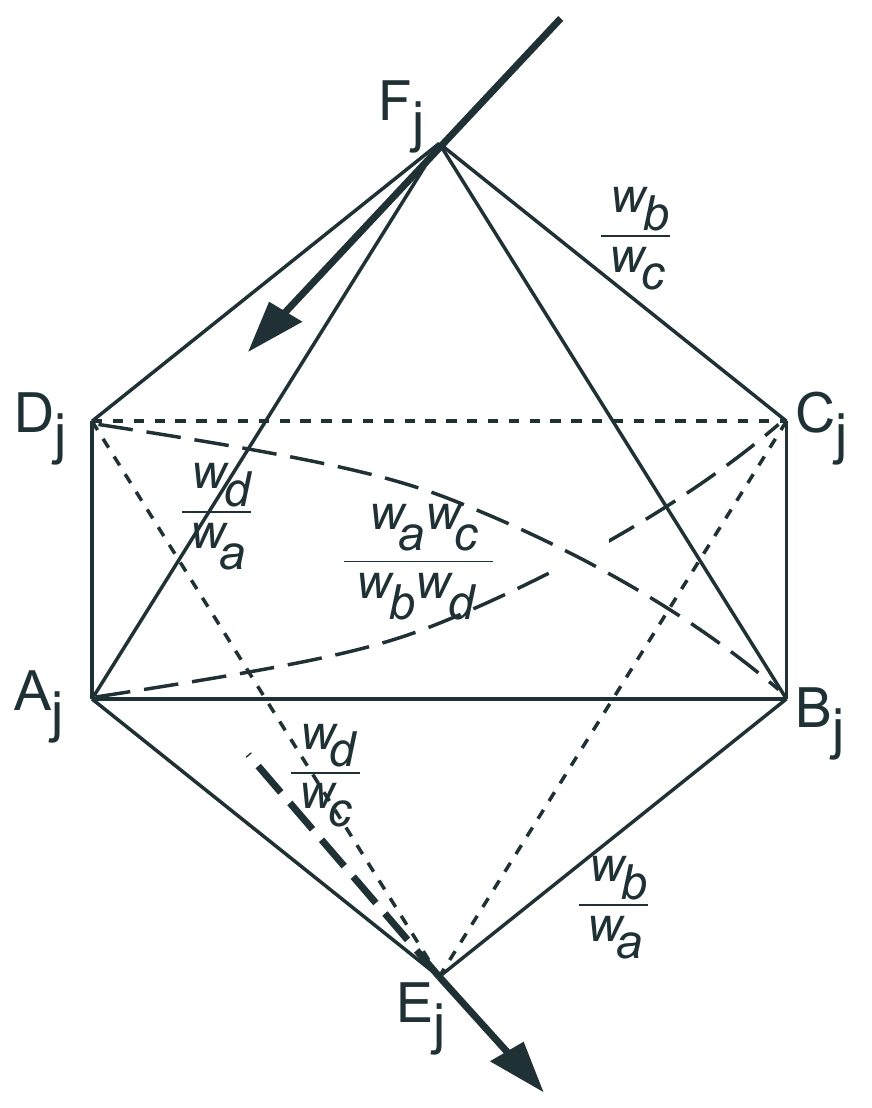}}\\
  \subfigure[Negative crossing $j$]
  {\begin{picture}(6,7)  
  \setlength{\unitlength}{0.8cm}\thicklines
        \put(4,5){\arc[5](1,1){360}}
    \put(2,7){\vector(1,-1){4}}
   \put(6,7){\line(-1,-1){1.8}}
   \put(3.8,4.8){\vector(-1,-1){1.8}}
    \put(3.7,3.2){$w_a$}
    \put(5.7,5){$w_b$}
    \put(3.7,6.7){$w_c$}
    \put(1.7,5){$w_d$}
    \put(2.2,3.9){${\rm D}_j$}
    \put(5.3,3.9){${\rm A}_j$}
    \put(5.3,5.9){${\rm B}_j$}
    \put(2.2,5.9){${\rm C}_j$}
    \put(4.3,5){$j$}
  \end{picture}\hspace{2cm}\includegraphics[scale=0.6]{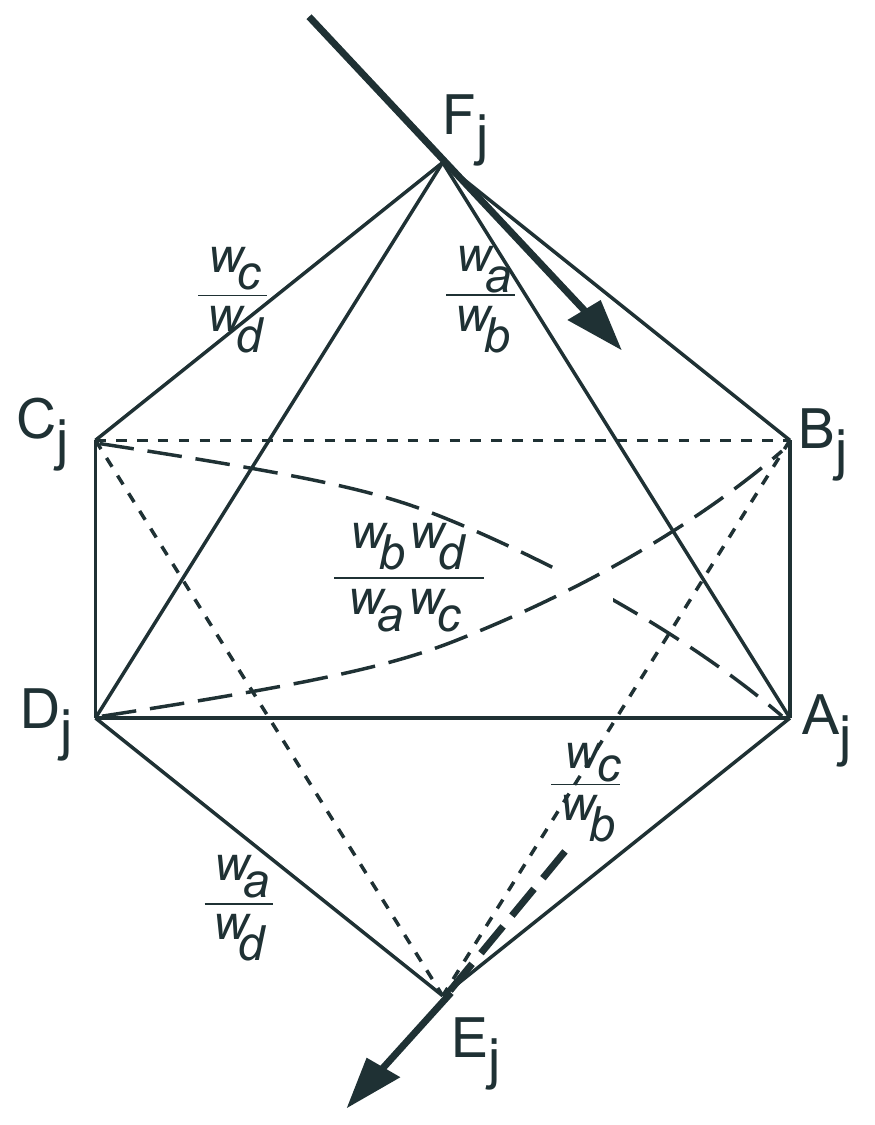}}
  \caption{Assignment of shape parameters}\label{fig7}
\end{figure}

To obtain the boundary parabolic representation 
$\rho:\pi_1(\mathbb{S}^3\backslash (L\cup\{\pm\infty\}))\longrightarrow{\rm PSL}(2,\mathbb{C})$,
we require two conditions on the ideal triangulation of $\mathbb{S}^3\backslash (L\cup\{\pm\infty\})$;
the product of shape parameters on any edge in the triangulation becomes one, and the holonomies induced
by meridian and longitude of the boundary torus act as {non-trivial} translations on the torus cusps.

Note that these conditions are all expressed as equations of shape parameters.
The former equations are called {\it (Thurston's) gluing equations}, the latter is called {\it completeness condition},
and the whole set of these equations are called {\it the hyperbolicity equations}.
Using Yoshida's construction in Section 4.5 of \cite{Tillmann13}, 
an {essential} solution ${\bold w}^{(0)}$ of the hyperbolicity equations determines a representation 
$$\rho_{\bold{w}^{(0)}}:\pi_1(\mathbb{S}^3\backslash (L\cup\{\pm\infty\}))
=\pi_1(\mathbb{S}^3\backslash L)\longrightarrow{\rm PSL}(2,\mathbb{C}).$$

\begin{proposition}[Proposition 1.1 of \cite{Cho13c}]\label{pro21} The set $\mathcal{I}$ defined in (\ref{defH}) is the set of the hyperbolicity equations of the five-term triangulation, where the shape parameters are assigned as in Figure \ref{fig7}. 
\end{proposition}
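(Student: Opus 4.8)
The plan is to prove the identity one equation at a time, matching each member of $\mathcal{I}$ with the gluing equation of a single edge of the five-term triangulation. The first thing to set up is the combinatorial dictionary between regions and edges. Each horizontal edge of the octahedron on a crossing joins a vertex of type $\{{\rm A}_j,{\rm C}_j\}$ to one of type $\{{\rm B}_j,{\rm D}_j\}$, hence joins $-\infty$ to $\infty$ after the gluing of Figure \ref{glue pattern}; moreover the horizontal edge lying in a given region, say ${\rm A}_j{\rm B}_j$ for the region carrying $w_a$, is shared by every crossing bounding that region. Consequently the horizontal edges belonging to a fixed region $k$ are identified into a single edge $e_k$ joining $-\infty$ to $\infty$, and the variable $w_k$ occurs in $W=\sum_j W^j$ exactly through the local potentials $W^j$ of the crossings adjacent to region $k$. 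This gives a bijection between the $n$ region-variables and the $n$ edges of type $e_k$.

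Next I would carry out the local computation at a single crossing. Using $\frac{d}{dz}\li(z)=-\frac{\log(1-z)}{z}$ with the chain rule, one differentiates $W^j$ in the region-variable occupying each of the four positions $a,b,c,d$, for both crossing signs. For a positive crossing in position $a$, collecting the logarithmic terms gives
\begin{equation*}
\exp\left(w_a\frac{\partial W^j}{\partial w_a}\right)=\frac{(w_a-w_b)(w_a-w_d)}{w_b w_d-w_a w_c},
\end{equation*}
and the other three positions (and the negative crossing) produce the analogous cross-ratios. The claim to verify is that each such expression equals the product of the shape parameters, read off from Figure \ref{fig7}, of those tetrahedra among ${\rm A}_j{\rm B}_j{\rm D}_j{\rm F}_j$, ${\rm B}_j{\rm C}_j{\rm D}_j{\rm F}_j$, ${\rm A}_j{\rm B}_j{\rm C}_j{\rm D}_j$, ${\rm A}_j{\rm B}_j{\rm C}_j{\rm E}_j$, ${\rm A}_j{\rm C}_j{\rm D}_j{\rm E}_j$ that contain the corresponding horizontal edge (for position $a$, the edge ${\rm A}_j{\rm B}_j$, which lies in exactly the three tetrahedra meeting it). This is a direct, if slightly tedious, comparison of cross-ratios against Figure \ref{fig7}.

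Then I would globalize: multiplying the local contributions over all crossings adjacent to region $k$ yields precisely the product of all shape parameters incident to the edge $e_k$, so that $\exp\!\left(w_k\frac{\partial W}{\partial w_k}\right)=1$ is literally Thurston's gluing equation at $e_k$. Since the edges $e_1,\ldots,e_n$ exhaust the edges joining $-\infty$ to $\infty$, this identifies the members of $\mathcal{I}$ with these gluing equations one-to-one.

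Finally I would account for the remaining hyperbolicity equations, namely the gluing equations of the edges \emph{not} of type $e_k$ (the edges internal to the $-\infty$ and $\infty$ balls, of type ${\rm A}_j{\rm C}_j$ and ${\rm B}_j{\rm D}_j$, and the edges running to the cusp vertices ${\rm P}_k$) together with the completeness condition at each cusp. The point is that the assignment of Figure \ref{fig7} is tautologically consistent: since every shape parameter is a ratio of the four region-variables at its crossing, the products around these remaining edges telescope to $1$ automatically, and the meridian–longitude holonomies at each ${\rm P}_k$ are forced to be the required parabolic translations. Hence no further equations survive and $\mathcal{I}$ is exactly the set of hyperbolicity equations. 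I expect this last step — checking that all the non-region edge equations and the completeness condition are automatically satisfied by the parametrization — to be the main obstacle, since it requires tracking shape parameters around the cusp triangulation rather than at a single crossing; the earlier steps are routine dilogarithm differentiation and cross-ratio bookkeeping.
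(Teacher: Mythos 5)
Your architecture is the one the paper itself points to: note that the paper does not prove Proposition \ref{pro21} at all, but defers to \cite{Cho13c} and \cite{Cho13b}, calling the proof ``quite complicate and technical,'' and the cited proof has exactly your shape --- match each equation $\exp\bigl(w_k\frac{\partial W}{\partial w_k}\bigr)=1$ with the gluing equation of the single horizontal-edge class $e_k$ joining $-\infty$ to $\infty$ determined by the region $k$, then show every remaining hyperbolicity equation is an identity in the $w$-parametrization. Your local computation is correct and agrees with the paper's own Lemmas \ref{lem40} and \ref{lem53} (note $(w_a-w_b)(w_a-w_d)=(w_b-w_a)(w_d-w_a)$, so your cross-ratio is the weight $x_a^j$ of Figure \ref{pic16}), and the region--edge dictionary and the globalization step are sound.

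The genuine gap is where you yourself placed it, but it is larger than your wording suggests, in two respects. First, ``telescoping'' genuinely disposes only of the diagonals ${\rm A}_j{\rm C}_j$ and ${\rm B}_j{\rm D}_j$: the faces glued between octahedra (Figure \ref{glue pattern}) contain only horizontal edges and edges through ${\rm E}_j,{\rm F}_j$, so each diagonal is internal to one octahedron, its gluing equation is a product of exactly three shape parameters, and the cancellation is a one-crossing check. By contrast, the edge classes through ${\rm E}_j,{\rm F}_j$ are global: the twist identifications ${\rm A}_j{\rm E}_j\sim{\rm C}_j{\rm E}_j$, ${\rm B}_j{\rm F}_j\sim{\rm D}_j{\rm F}_j$ and the three gluing patterns chain these edges along entire strands through many crossings, and a single class already receives two contributions from each of the tetrahedra ${\rm A}_j{\rm B}_j{\rm C}_j{\rm E}_j$ and ${\rm A}_j{\rm C}_j{\rm D}_j{\rm E}_j$ (both ${\rm A}_j{\rm E}_j$ and ${\rm C}_j{\rm E}_j$ are edges of each). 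Showing these global products are identically $1$ requires following the cusp triangulation along over- and under-strands through all three gluing patterns; ``every shape parameter is a ratio of the four region-variables at its crossing'' is not by itself a reason for cancellation, and this bookkeeping is precisely the technical bulk of the proof in \cite{Cho13c} that your outline asserts rather than performs.

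Second, your claim that the completeness condition is ``forced'' is false as stated. What the parametrization gives automatically is that the meridian holonomy is parabolic, i.e.\ a translation of the cusp torus, \emph{possibly trivial}; nontriviality is not an identity in the $w$'s and genuinely fails for non-essential solutions, since when adjacent region variables coincide the relevant shape parameter $\frac{w_b^{(0)}}{w_a^{(0)}}$ degenerates to $1$. This is exactly why the paper proves Lemma \ref{ntri} separately, under the essentialness hypothesis $\frac{w_b^{(0)}}{w_a^{(0)}}\neq 0,1,\infty$, to obtain a nontrivial meridian translation; without invoking essentialness your final step cannot close. (Once the meridian is a nontrivial parabolic, the longitude, commuting with it, is automatically a translation with the same fixed point, so no further equation arises there --- but that, too, deserves a sentence rather than being absorbed into ``forced.'')
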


{The proof of this proposition is quite complicate and technical, so we refer \cite{Cho13c} and \cite{Cho13b}.
The following lemma was stated and used in \cite{Cho13c} without proof because it is almost trivial.
To avoid confusion, we add the proof here.

\begin{lemma}\label{ntri} The holonomy of a meridian induced by an essential solution 
$\bold{w}^{(0)}=(w_1^{(0)},\ldots,w_n^{(0)})$ of $\mathcal{I}$ is always non-trivial.
\end{lemma}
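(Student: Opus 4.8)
The plan is to write the meridian holonomy explicitly as a Euclidean isometry of a horospherical cross-section and to verify that its translation part does not vanish; essentialness is exactly the hypothesis that rules out this vanishing. First I would normalize the developing map of $\rho_{\mathbf{w}^{(0)}}$ so that the ideal point $P_k$ of the torus cusp under consideration develops to $\infty\in\partial\mathbb{H}^3$. On a horosphere centred there the holonomy of the meridian $\mu$ becomes a complex-affine map $z\mapsto a\,z+b$ of $\mathbb{C}$, whose multiplier $a$ is the product of the shape parameters met by $\mu$ as it runs around the link of $P_k$. The special gluing pattern of the twisted octahedra (Figure \ref{glue pattern}) arranges these shape parameters into complete edge-stars, so the edge equations of $\mathcal{I}$, valid for $\mathbf{w}^{(0)}$ by Proposition \ref{pro21}, force $a=1$; hence $\rho_{\mathbf{w}^{(0)}}(\mu)$ is a translation and non-triviality is precisely the statement $b\neq0$.

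Next I would realise $\mu$ as a short loop linking a single arc of $D$. On the cusp torus this loop crosses only the corner triangles cut off at the vertices $\mathrm{E}_j,\mathrm{F}_j$ of the one or two octahedra of Figure \ref{twistocta} incident to that arc, so the displacement $b$ can be read off from the shape-parameter assignment of Figure \ref{fig7} restricted to those octahedra. Tracking the horospherical section once around $\mu$ expresses $b$ as an explicit M\"obius function of the two region variables $w_a,w_b$ flanking the arc, degenerating only when $w_a=w_b$ or when one of them is $0$. Since the two regions on either side of an arc are adjacent in $D$, essentialness gives $w_a^{(0)}\neq w_b^{(0)}$ together with $w_a^{(0)},w_b^{(0)}\neq0$, whence $b\neq0$ and the meridian holonomy is non-trivial. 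This is also where both clauses of the definition of an essential solution get used, which is a good sanity check on the localization.

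The main obstacle I anticipate is the bookkeeping: one must follow the developing map through the twist of the octahedron and its subdivision into the five tetrahedra $\mathrm{A}_j\mathrm{B}_j\mathrm{D}_j\mathrm{F}_j,\ldots$ in order to pin down $b$ as a concrete function of the flanking region variables, and to confirm along the way that the corner triangles at $\mathrm{E}_j$ and $\mathrm{F}_j$ assemble into an honest meridian loop rather than a longitude. Once that formula is in hand the conclusion is immediate from the definition of an essential solution, which is presumably why the authors regard the statement as almost trivial. In principle the computation must be repeated for both crossing signs and for the three gluing patterns of Figure \ref{glue pattern}, but it is symmetric enough that a single representative case settles the lemma.
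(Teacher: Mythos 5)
Your approach matches the paper's own proof: the authors likewise localize the meridian to a loop around a single arc flanked by region variables $w_a,w_b$, observe in the corresponding local cusp diagram of the five-term triangulation (their Figure \ref{nontricusp}) that the shape parameter $\frac{w_b^{(0)}}{w_a^{(0)}}$ occupies two positions so that the holonomy is a translation degenerating only in the excluded cases, and conclude non-triviality from essentialness, i.e.\ $\frac{w_b^{(0)}}{w_a^{(0)}}\neq 0,1,\infty$, exactly the two clauses you invoke. The one ingredient they make explicit that you leave implicit is the standing assumption (Footnote 4) that every link component contains the gluing pattern of Figure \ref{glue pattern}(a), which guarantees an arc with this simple two-triangle cusp picture exists on each component and settles your worry about the loop being an honest meridian.
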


\begin{proof} At first, note that we assumed any component of the link diagram contains the gluing pattern
in Figure \ref{glue pattern}(a). (See Footnote 4.) For the local diagram with the meridian loop $m$ and the variables $w_a, w_b$
in Figure \ref{nontricusp}(a),
the corresponding cusp diagram of the five-term triangulation becomes Figure \ref{nontricusp}(b).

\begin{figure}[ht]
\centering
  \subfigure[Local diagram of the link]
  {\includegraphics[scale=0.6]{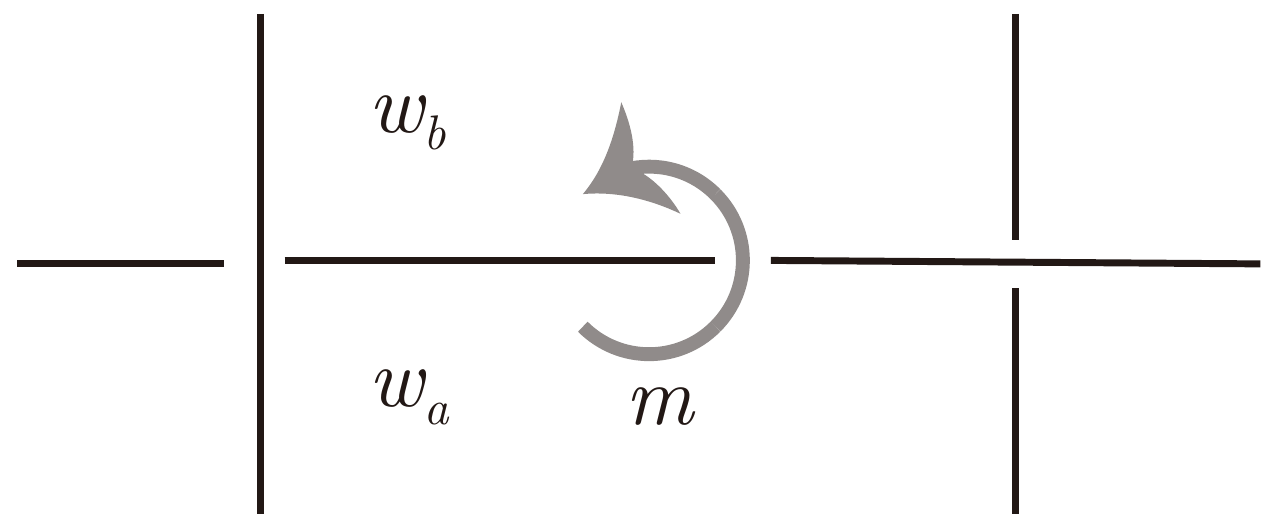}}\hspace{0.5cm}
  \subfigure[Local cusp diagram]
  {\includegraphics[scale=0.6]{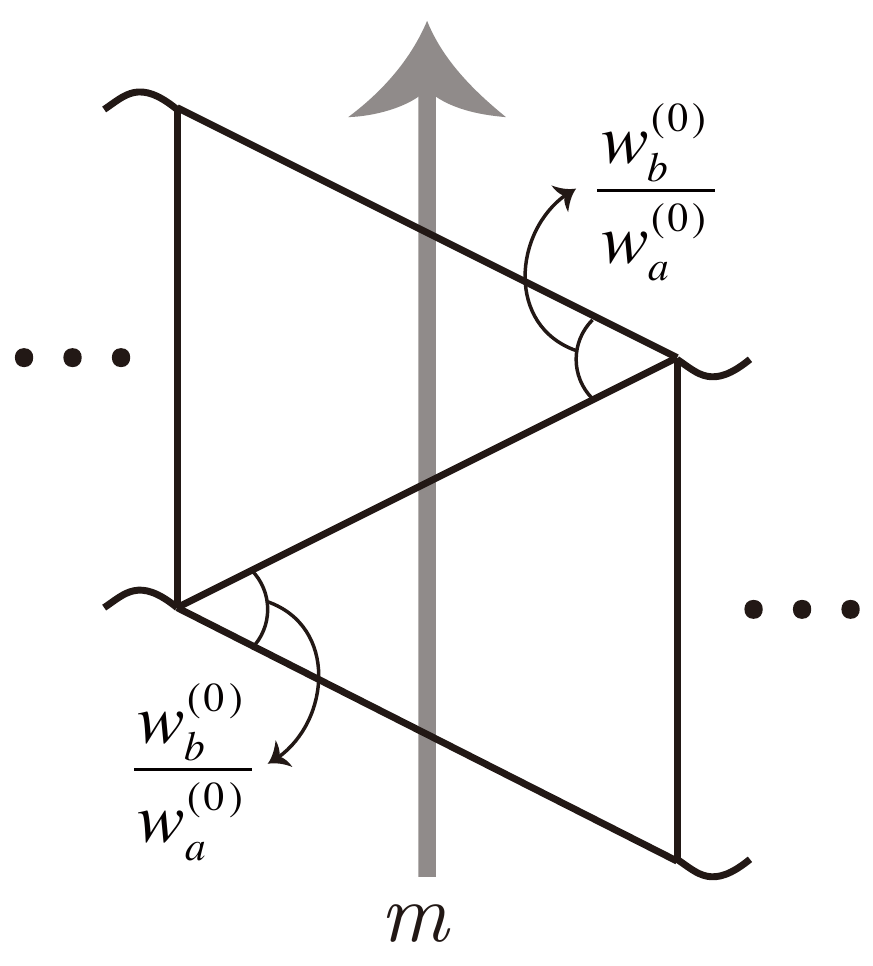}}
  \caption{Holonomy of the meridian $m$}\label{nontricusp}
\end{figure} 

Note that the same shape parameter $\frac{w_b^{(0)}}{w_a^{(0)}}$ is placed in two different positions in Figure \ref{nontricusp}(b).
The essentialness of the solution $\bold{w}^{(0)}$ guarantees $\frac{w_b^{(0)}}{w_a^{(0)}}\neq 0,1,\infty$,
so the holonomy cannot be trivial.

\end{proof}}

From Proposition \ref{pro21} {and Lemma \ref{ntri}}, a solution $\bold{w}^{(0)}=(w_1^{(0)},\ldots,w_n^{(0)})$ of $\mathcal{I}$ induces
a boundary-parabolic representation $\rho_{\bold{w^{(0)}}}$. We will make a special solution $\bold{w}^{(0)}$
from the given representation $\rho$ in the next section, which satisfies
$\rho_{\bold{w^{(0)}}}=\rho$ up to conjugation.

\section{Construction of the solution}\label{sec3}

\subsection{Reviews on shadow-coloring}

This section is a summary of definitions and properties we need. 
For complete descriptions, see Section 2 of \cite{Cho14a}.
(All definitions and Lemma \ref{lem} originally came from \cite{Kabaya14}.)

Let $\mathcal{P}$ be the set of parabolic elements of ${\rm PSL}(2,\mathbb{C})={\rm Isom^+}(\mathbb{H}^3)$.
We identify $\mathbb{C}^2\backslash\{0\}/\pm$ with $\mathcal{P}$ by
\begin{equation}\label{matrixcc}
\left(\begin{array}{cc}\alpha &\beta\end{array}\right)  
  \longleftrightarrow\left(\begin{array}{cc}1+\alpha\beta & \beta^2 \\ -\alpha^2& 1-\alpha\beta\end{array}\right),
\end{equation}
and define operation $*$ by
\begin{eqnarray*}
  \left(\begin{array}{cc}\alpha & \beta\end{array}\right)*  \left(\begin{array}{cc}\gamma & \delta\end{array}\right)
  :=\left(\begin{array}{cc}\alpha & \beta\end{array}\right)
  \left(\begin{array}{cc}1+\gamma\delta &\delta^2 \\   -\gamma^2& 1-\gamma\delta\end{array}\right)
  \in \mathcal{P},
\end{eqnarray*}
where this operation is actually induced by the conjugation as follows:
$$  \left(\begin{array}{cc}\alpha & \beta\end{array}\right)*  \left(\begin{array}{cc}\gamma & \delta\end{array}\right)\in\mathcal{P}
\longleftrightarrow \left(\begin{array}{cc}\gamma&\delta\end{array}\right)
  \left(\begin{array}{cc}\alpha & \beta\end{array}\right)
  \left(\begin{array}{cc}\gamma &\delta\end{array}\right)^{-1}\in{\rm PSL}(2,\mathbb{C}).$$
The inverse operation $*^{-1}$ is expressed by
$$\left(\begin{array}{cc}\alpha & \beta\end{array}\right)*^{-1}  \left(\begin{array}{cc}\gamma & \delta\end{array}\right)
  = \left(\begin{array}{cc}\alpha & \beta\end{array}\right)
  \left(\begin{array}{cc}1-\gamma\delta & -\gamma^2 \\ \delta^2 & 1+\gamma\delta\end{array}\right)\in\mathcal{P},$$
and $(\mathcal{P},*)$ becomes a {\it conjugation quandle}. 
Here, {\it quandle} implies, for any $a,b,c\in\mathcal{P}$, the map $*b:a\mapsto a*b$ is bijective and
$$a*a=a, ~(a*b)*c=(a*c)*(b*c)$$
hold. {\it Conjugation quandle} implies the operation $*$ is defined by the conjugation.

We define {\it the Hopf map} $h:\mathcal{P}\rightarrow\mathbb{CP}^1=\mathbb{C}\cup\{\infty\}$ by
$$\left(\begin{array}{cc}\alpha &\beta\end{array}\right)\mapsto \frac{\alpha}{\beta}.$$
Note that the image 
is the fixed point of the M\"{o}bius transformation $f(z)=\frac{(1+\alpha\beta)z-\alpha^2}{\beta^2 z+(1-\alpha\beta)}$.

For an oriented link diagram $D$ of $L$ and a given boundary-parabolic representation $\rho$, we assign {\it arc-colors}
$a_1,\ldots,a_r\in\mathcal{P}$ to arcs of $D$ so that each $a_k$ is the image of the meridian around the arc
under the representation $\rho$. Note that, in Figure \ref{fig02}, we have 
\begin{equation}\label{ope}
     a_m=a_l*a_k.
\end{equation}

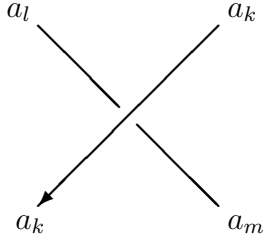
\begin{figure}[h]
\centering  \setlength{\unitlength}{0.6cm}\thicklines
\begin{picture}(4.5,5.5)(1.5,0.5)  
    \put(6,5){\vector(-1,-1){4}}
    \put(2,5){\line(1,-1){1.8}}
    \put(4.2,2.8){\line(1,-1){1.8}}
    \put(6.2,5.2){$a_k$}
    \put(1.5,0.5){$a_k$}
    \put(1.3,5.2){$a_l$}
    \put(6.2,0.5){$a_m$}
  \end{picture}
  \caption{Arc-coloring}\label{fig02}
\end{figure}

We also assign {\it region-colors} $s_1,\ldots,s_n\in\mathcal{P}$ to regions of $D$ satisfying the rule in Figure \ref{fig03}.
Note that, if an arc-coloring is given, then a choice of one region-color determines all the other region-colors.

\begin{figure}[h]
\centering  \setlength{\unitlength}{0.6cm}\thicklines
\begin{picture}(6,5)  
    \put(6,4){\vector(-1,-1){4}}
    \put(1.5,2.2){$s$}
    \put(5,1.5){$s*a_k$}
    \put(6.2,4.2){$a_k$}
  \end{picture}
  \caption{Region-coloring}\label{fig03}
\end{figure}
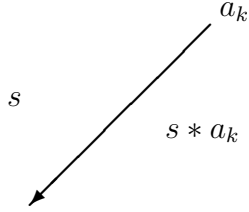

\begin{lemma}[Lemma 2.4 of \cite{Cho14a}]\label{lem} 
Consider the arc-coloring induced by the boundary-parabolic representation $\rho:\pi_1(L)\rightarrow {\rm PSL}(2,\mathbb{C})$.
Then, for any triple $(a_k,s,s*a_k)$ of an arc-color $a_k$ and its surrounding region-colors $s, s*a_k$ as in Figure \ref{fig03},
there exists a region-coloring satisfying
\begin{equation*}
  h(a_k)\neq h(s)\neq h(s*a_k)\neq h(a_k).
\end{equation*}
\end{lemma}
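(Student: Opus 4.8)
The plan is to collapse the three inequalities into a single one and then close the argument by a genericity count on $\mathbb{CP}^1$. First I would record the structural fact that, on Hopf images, the quandle operation $*a_k$ acts by the parabolic Möbius transformation fixing $h(a_k)$. Writing $s=(\alpha\ \beta)$ and $a_k=(\gamma\ \delta)$ and using the row-vector description $s*a_k=(\alpha\ \beta)\left(\begin{smallmatrix}1+\gamma\delta & \delta^2\\ -\gamma^2 & 1-\gamma\delta\end{smallmatrix}\right)$, a direct computation gives $h(s*a_k)=\psi_{a_k}(h(s))$, where $\psi_{a_k}(z)=\frac{(1+\gamma\delta)z-\gamma^2}{\delta^2 z+(1-\gamma\delta)}$ is precisely the Möbius transformation whose unique fixed point is $h(a_k)=\gamma/\delta$ (this is the map already recorded just after the definition of the Hopf map).

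From this I would deduce that all three desired inequalities are equivalent to the single condition $h(s)\neq h(a_k)$. Since $\psi_{a_k}$ is parabolic with unique fixed point $h(a_k)$, we have $h(s)=h(s*a_k)$ iff $h(s)$ is fixed by $\psi_{a_k}$ iff $h(s)=h(a_k)$; also $h(s*a_k)=h(a_k)=\psi_{a_k}(h(a_k))$ iff $h(s)=h(a_k)$ by injectivity of $\psi_{a_k}$; and $h(a_k)\neq h(s)$ is the condition itself. Hence the whole chain $h(a_k)\neq h(s)\neq h(s*a_k)\neq h(a_k)$ holds exactly when $h(s)\neq h(a_k)$.

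Next I would exploit the single free parameter of a region-coloring. With the arc-coloring fixed, a region-coloring is determined by the color $s_0\in\mathcal{P}$ of one base region (this is the consistency built into the rule of Figure \ref{fig03}), and every other region-color $s_R$ is obtained from $s_0$ by right multiplication by a fixed product $G_R$ of the arc-matrices met along a path to $R$. Each such matrix has determinant $1$, so $G_R$ is invertible and the assignment $h(s_0)\mapsto h(s_R)$ is a genuine invertible Möbius transformation of $\mathbb{CP}^1$. As $s_0=(\alpha\ \beta)$ ranges over $\mathcal{P}=\mathbb{C}^2\setminus\{0\}/\pm$, its image $h(s_0)=\alpha/\beta$ attains every value of $\mathbb{CP}^1$; and for each arc $a_k$ and each adjacent region $R$, the bad event $h(s_R)=h(a_k)$ pulls back under this Möbius transformation to exactly one value of $h(s_0)$.

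Finally, since the diagram has finitely many arcs and regions, only finitely many values of $h(s_0)$ are forbidden, so I would choose $h(s_0)\in\mathbb{CP}^1$ (hence $s_0\in\mathcal{P}$) off this finite set. The resulting region-coloring satisfies $h(s)\neq h(a_k)$ at every arc, so by the reduction the full chain holds for every triple simultaneously, in particular for the given one. I expect the main obstacle to be the first step: verifying with the correct (row-vector) convention that $*a_k$ induces $\psi_{a_k}$ with fixed point exactly $h(a_k)$, since this is what forces all three inequalities to coincide with $h(s)\neq h(a_k)$; once that identity is in hand, the genericity count on $\mathbb{CP}^1$ is routine.
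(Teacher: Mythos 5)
Your proposal is correct and takes essentially the same route as the paper's proof: the paper likewise observes that $h(s*a_k)=\widehat{a_k}(h(s))$ with $\widehat{a_k}$ parabolic fixing only $h(a_k)$, and chooses the single free region-color generically (each $h(s_k)$ is a fixed M\"obius image of $h(s_1)$, so only finitely many values of $h(s_1)$ are forbidden). Your reduction of the three inequalities to the single condition $h(s)\neq h(a_k)$ is just a minor streamlining of the paper's fixed-point argument.
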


\begin{proof}
For the given arc-colors $a_1,\ldots,a_r$, we choose region-colors $s_1,\ldots,s_n$ so that
\begin{equation}\label{exi}
  \{h(a_1),\ldots,h(a_r)\}\cap\{h(s_1),\ldots,h(s_n)\}=\emptyset.
\end{equation}
This is always possible because, {
each $h(s_k)$ is written as $h(s_k)=M_k(h(s_1))$ by a M\"{o}bius transformation $M_k$, which only depends on
the arc-colors $a_1,\dots,a_r$. If we choose $h(s_1)\in\mathbb{CP}^1$ away from the finite set
$$\bigcup_{1\leq k\leq n}\left\{M_k^{-1}(h(a_1)),\ldots,M_k^{-1}(h(a_r))\right\},$$
we have $h(s_k)\notin \{h(a_1),\ldots,h(a_r)\}$ for all $k$.}
 
Now consider Figure \ref{fig03} and assume
$h(s*a_k)=h(s)$. Then we obtain
\begin{equation}\label{eqnh}
  h(s*a_k)=\widehat{a_k}(h(s))=h(s),
\end{equation}
where $\widehat{a_k}:\mathbb{CP}^1\rightarrow\mathbb{CP}^1$ is the M\"{o}bius transformation
\begin{equation}\label{mob}
\widehat{a_k}(z)=\frac{(1+\alpha_k\beta_k)z-\alpha_k^2}{\beta_k^2 z+(1-\alpha_k\beta_k)}
\end{equation}
of $a_k=\left(\begin{array}{cc}\alpha_k &\beta_k\end{array}\right)$. Then (\ref{eqnh}) implies
$h(s)$ is the fixed point of $\widehat{a_k}$, which means $h(a_k)=h(s)$ and this contradicts (\ref{exi}).

\end{proof}

We remark that Lemma \ref{lem} holds for any choice of $h(s_1)\in\mathbb{CP}^1$ with only finitely many exceptions. 
Therefore, if we want to find a region-coloring explicitly, 
we first choose $h(s_1)\notin\{h(a_1),\ldots,h(a_r)\}$ and then decide $h(s_2), \ldots, h(s_n)$ using 
\begin{equation}\label{app1}
h(s_1*a)=\widehat{a}(h(s_1)),~h(s_1*^{-1} a)=\widehat{a}^{-1}(h(s_1)).
\end{equation}
If this choice does not satisfy Lemma \ref{lem}, then we change $h(s_1)$ and do the same process again.
This process is very easy and it ends in finite steps. If proper $h(s_1)$ is chosen, 
then we can easily extend the value $h(s_1)$ to a region-color $s_1$ and find the proper region-coloring $\{s_1,\ldots,s_n\}$.
This observation implies the following corollary.

\begin{corollary}\label{cor32}
 Consider a sequence of diagrams $D_1,\ldots,D_m$, where each $D_{k+1}$ ($k=1,\ldots,m-1$) is obtained from $D_k$ by applying
one of the Reidemeister moves in Figure \ref{pic02} once.
Also assume arc-colorings of $D_1,\ldots,D_m$ are given by certain boundary-parabolic representation 
$\rho:\pi_1(L)\rightarrow {\rm PSL}(2,\mathbb{C})$.
(Note that a region-coloring of $D_1$ determines the region-colorings of $D_2,\ldots,D_m$ uniquely.)
Then there exists a region-coloring of $D_1$ satisfying Lemma \ref{lem} for all region-colorings of $D_1,\ldots,D_m$.
\end{corollary}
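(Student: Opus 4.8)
The plan is to reduce the statement to a single genericity choice: I will show that it suffices to pick one seed value $h(s_1)\in\mathbb{CP}^1$ for the region-coloring of $D_1$ lying outside a finite subset of $\mathbb{CP}^1$, and then extend it to a region-color $s_1\in\mathcal{P}$. Recall from the proof of Lemma \ref{lem} and the remark following it that, once the arc-colors of $D_1$ are fixed by $\rho$, every region-color $s_k$ of $D_1$ has $h$-value $h(s_k)=M_k(h(s_1))$ for a M\"obius transformation $M_k$ depending only on the arc-colors; concretely, each $M_k$ is a composition of the maps $\widehat{a}^{\pm1}$ from (\ref{mob}) dictated by (\ref{app1}).

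First I would extend this description to the entire sequence. Since the arc-colorings of $D_1,\ldots,D_m$ all come from the same representation $\rho$, the arc-colors — and hence all the maps $\widehat{a}^{\pm1}$ — are fixed once $\rho$ and the sequence are fixed, independently of the seed $h(s_1)$. Under the Reidemeister move carrying $D_i$ to $D_{i+1}$, the regions surviving from $D_i$ keep their colors, while each newly created region acquires its color from an adjacent one by a single quandle operation $*a^{\pm1}$, so its $h$-value is obtained by applying $\widehat{a}^{\pm1}$ via (\ref{app1}). Consequently, every region-color appearing anywhere in $D_1,\ldots,D_m$ has $h$-value of the form $M(h(s_1))$ for a M\"obius transformation $M$ depending only on $\rho$ and the diagrams, not on the seed. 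This is the crux of the argument and the step needing the most care; once it is in place, the rest is a counting of exceptions.

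Next I would count the bad seed values. Fix a diagram $D_i$ and a triple $(a,s,s*a)$ as in Lemma \ref{lem}, with $h(s)=M(h(s_1))$ and $h(s*a)=\widehat{a}(M(h(s_1)))=:M'(h(s_1))$. The value $h(a)$ is independent of the seed, so the conditions $h(s)\neq h(a)$ and $h(s*a)\neq h(a)$ fail only when $h(s_1)$ equals $M^{-1}(h(a))$ or $M'^{-1}(h(a))$ respectively — two excluded values. For the remaining inequality $h(s)\neq h(s*a)$, we have $M'=\widehat{a}\circ M$ with $\widehat{a}\neq\mathrm{id}$ (as $a$ is a nontrivial parabolic), hence $M\neq M'$ as M\"obius transformations and the equation $M(z)=M'(z)$ has at most two solutions, excluding at most two further seeds. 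Thus each triple rules out finitely many values of $h(s_1)$, and since there are finitely many triples in each $D_i$ and finitely many diagrams, the total set of bad seeds is a finite subset of $\mathbb{CP}^1$.

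Finally, choosing $h(s_1)$ outside this finite set and extending it to a region-color $s_1\in\mathcal{P}$ (as in the remark after Lemma \ref{lem}) produces a region-coloring of $D_1$ that, together with the arc-colors from $\rho$, propagates to region-colorings of $D_2,\ldots,D_m$ all satisfying Lemma \ref{lem} simultaneously. The main obstacle is the bookkeeping in the second step — verifying that the regions appearing under every type of Reidemeister move in Figure \ref{pic02} really do inherit $h$-values that are seed-independent M\"obius images of $h(s_1)$ — rather than any analytic difficulty, since the finiteness count is then immediate.
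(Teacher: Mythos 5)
Your proposal is correct and follows essentially the same route as the paper: the paper's proof also fixes a single seed region-color $s_1$ (the unbounded region), notes that for each $D_k$ only finitely many values of $h(s_1)\in\mathbb{CP}^1$ violate Lemma \ref{lem}, and chooses $h(s_1)$ outside the finite union of bad values before extending it to a region-color. Your explicit bookkeeping --- that every region-color in every $D_i$ has $h$-value $M(h(s_1))$ for a seed-independent M\"obius transformation $M$, and the count of excluded seeds per triple $(a,s,s*a)$ --- merely makes precise what the paper leaves implicit via the remark after Lemma \ref{lem}.
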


\begin{proof}
Let $s_1$ be the region-color of the unbounded region of $D_1,\ldots,D_m$.
For each $D_k$, the number of values of $h(s_1)\in\mathbb{CP}^1$ that does not satisfy Lemma \ref{lem} is finite.
Therefore, we can choose $h(s_1)$ so that Lemma \ref{lem} holds for all $D_1,\ldots,D_m$.
By extending $h(s_1)$ to a region-color $s_1$, we can determine the region-colorings of $D_1,\ldots,D_m$
satisfying Lemma \ref{lem} uniquely.
 
\end{proof}

The arc-coloring induced by $\rho$ together with the region-coloring satisfying Lemma \ref{lem}
is called {\it the shadow-coloring induced by} $\rho$. 
We choose an element $p\in\mathcal{P}$ so that 
\begin{equation}\label{p}
h(p)\notin\{h(a_1),\ldots,h(a_r), h(s_1),\ldots,h(s_n)\}.
\end{equation}
The geometric shape of the five-term triangulation in Section \ref{sec2} will be determined by the shadow-coloring induced by $\rho$ and $p$ 
in the next section.

From now on, we fix the representatives of shadow-colors in $\mathbb{C}^2\backslash\{0\}$.
As mentioned in \cite{Cho14a}, the representatives of some arc-colors may satisfy (\ref{ope}) up to sign, 
in other words, $a_m=\pm (a_l*a_k)$. 
However, the representatives of the region-colors are uniquely determined due to the fact $s*(\pm a)=s*a$
for any region-color $s$ and any arc-color $a$.

For $a=\left(\begin{array}{cc}\alpha_1 &\alpha_2\end{array}\right)$ and $
b=\left(\begin{array}{cc}\beta_1 &\beta_2\end{array}\right)$ in $\mathbb{C}^2\backslash\{0\}$,
we define {\it determinant} $\det(a,b)$ by
\begin{equation*}
  \det(a,b):=\det\left(\begin{array}{cc}\alpha_1 & \alpha_2 \\\beta_1 & \beta_2\end{array}\right)=\alpha_1 \beta_2-\alpha_2 \beta_1.
\end{equation*}
Then the determinant satisfies
$\det(a*c,b*c)=\det(a,b)$
for any $a,b,c\in\mathbb{C}^2\backslash\{0\}$. Furthermore, for $v_0,\ldots,v_3\in\mathbb{C}^2\backslash \{0\}$,
the cross-ratio can be expressed using the determinant by
$$[h(v_0),h(v_1),h(v_2),h(v_3)]=\frac{\det(v_0,v_3)\det(v_1,v_2)}{\det(v_0,v_2)\det(v_1,v_3)}.$$
(For the proofs, see Section 2 of \cite{Cho14a}.)

\subsection{Geometric shape of the five-term triangulation}\label{sec22}

Note that the five-term triangulation was already defined in Section \ref{sec2}.
Consider the crossings in Figure \ref{fig04} with the shadow-colorings induced by $\rho$, and let $w_a,\ldots,w_d$
be the variables assigned to regions of $D$. 

\begin{figure}[h]
\centering
\subfigure[Positive crossing]{\begin{picture}(6,5.5)  
  \setlength{\unitlength}{0.8cm}\thicklines
    \put(6,5){\vector(-1,-1){4}}
    \put(2,5){\line(1,-1){1.8}}
    \put(4.2,2.8){\vector(1,-1){1.8}}
    \put(6.2,5.2){$a_k$}
    \put(1.3,0.8){$a_k$}
    \put(1.3,5.2){$a_l$}
    \put(6.2,0.8){$a_l*a_k$}
    \put(3.5,4.7){$s*a_l$}
    \put(3.9,4.2){$w_c$}
    \put(2,3){$s$}
    \put(1.9,2.5){$w_d$}
    \put(5.5,3){\small $(s*a_l)*a_k$}
    \put(6.5,2.5){$w_b$}
    \put(3.5,1){$s*a_k$}
    \put(3.9,0.5){$w_a$}
  \end{picture}}
\subfigure[Negative crossing]{\begin{picture}(6,5.5)  
  \setlength{\unitlength}{0.8cm}\thicklines
    \put(6,5){\vector(-1,-1){4}}
    \put(3.8,3.2){\vector(-1,1){1.8}}
    \put(4.2,2.8){\line(1,-1){1.8}}
    \put(6.2,5.2){$a_k$}
    \put(1.3,0.8){$a_k$}
    \put(1.3,5.2){$a_l$}
    \put(6.2,0.8){$a_l*a_k$}
    \put(4,4.7){$s$}
    \put(3.9,4.2){$w_d$}
    \put(1.2,3){$s*a_l$}
    \put(1.5,2.5){$w_a$}
    \put(5.5,3){$s*a_k$}
    \put(5.8,2.5){$w_c$}
    \put(3,1){\small $(s*a_l)*a_k$}
    \put(3.9,0.5){$w_b$}
  \end{picture}}
 \caption{Crossings with shadow-colorings and the region variables}\label{fig04}
\end{figure}
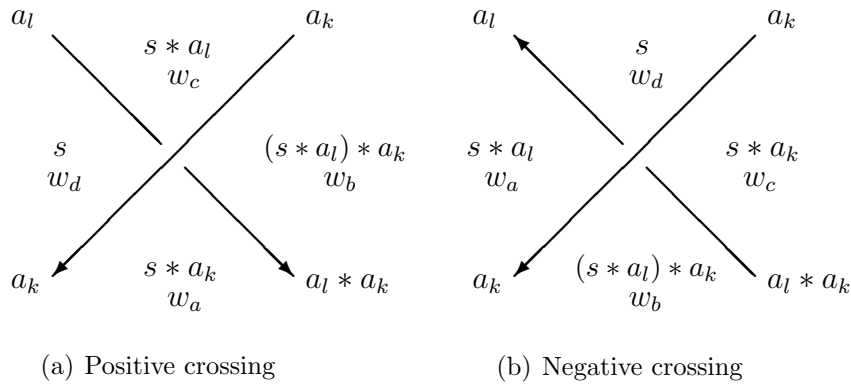

We place tetrahedra at each crossings of $D$ and assign coordinates of them as in Figure \ref{fig06}
so as to make them hyperbolic ideal tetrahedra in the upper-space model of the hyperbolic 3-space $\mathbb{H}^3$.

\begin{figure}[h]
\centering
\subfigure[Positive crossing]{\includegraphics[scale=0.8]{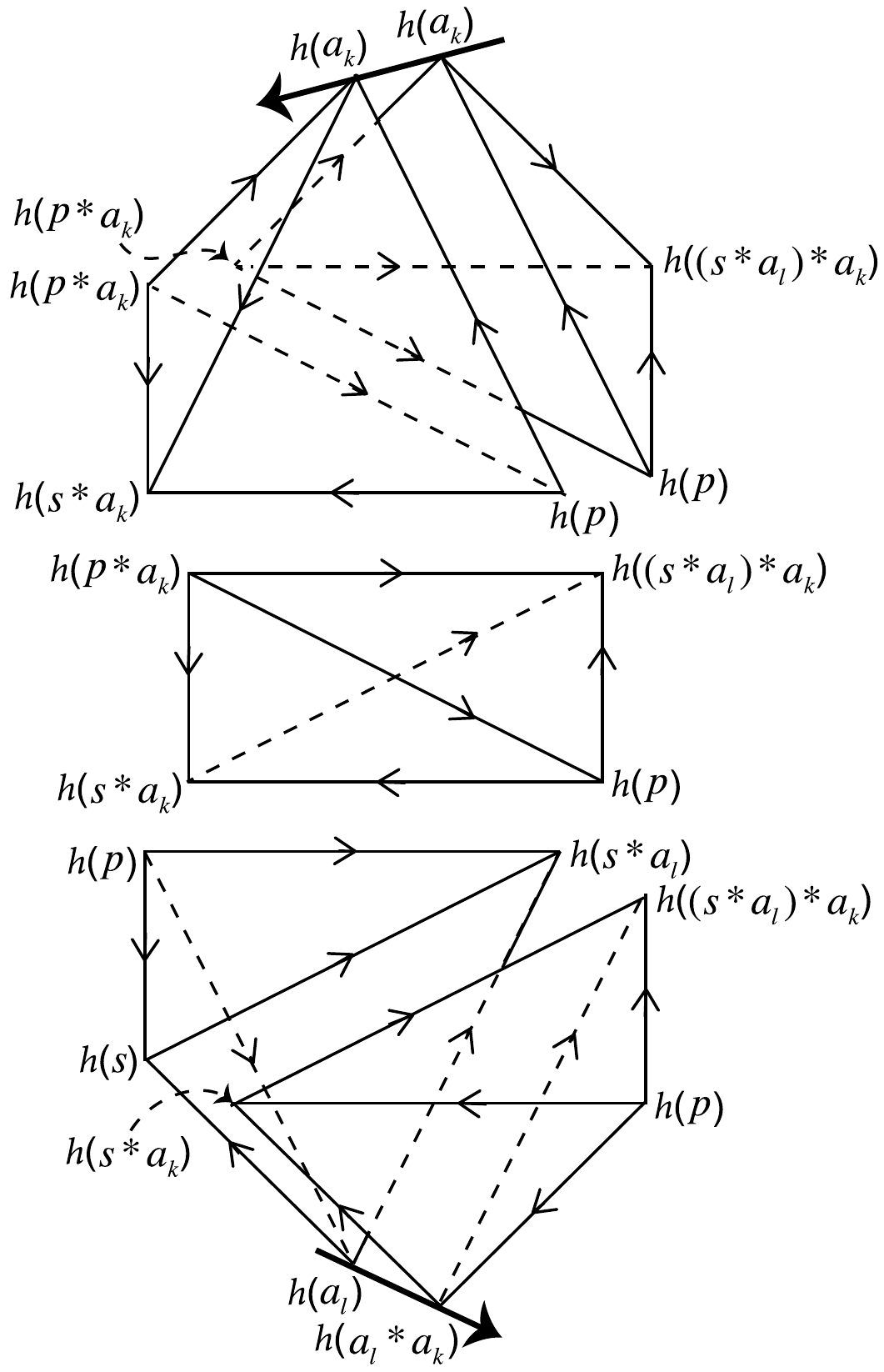}}
\subfigure[Negative crossing]{\includegraphics[scale=0.8]{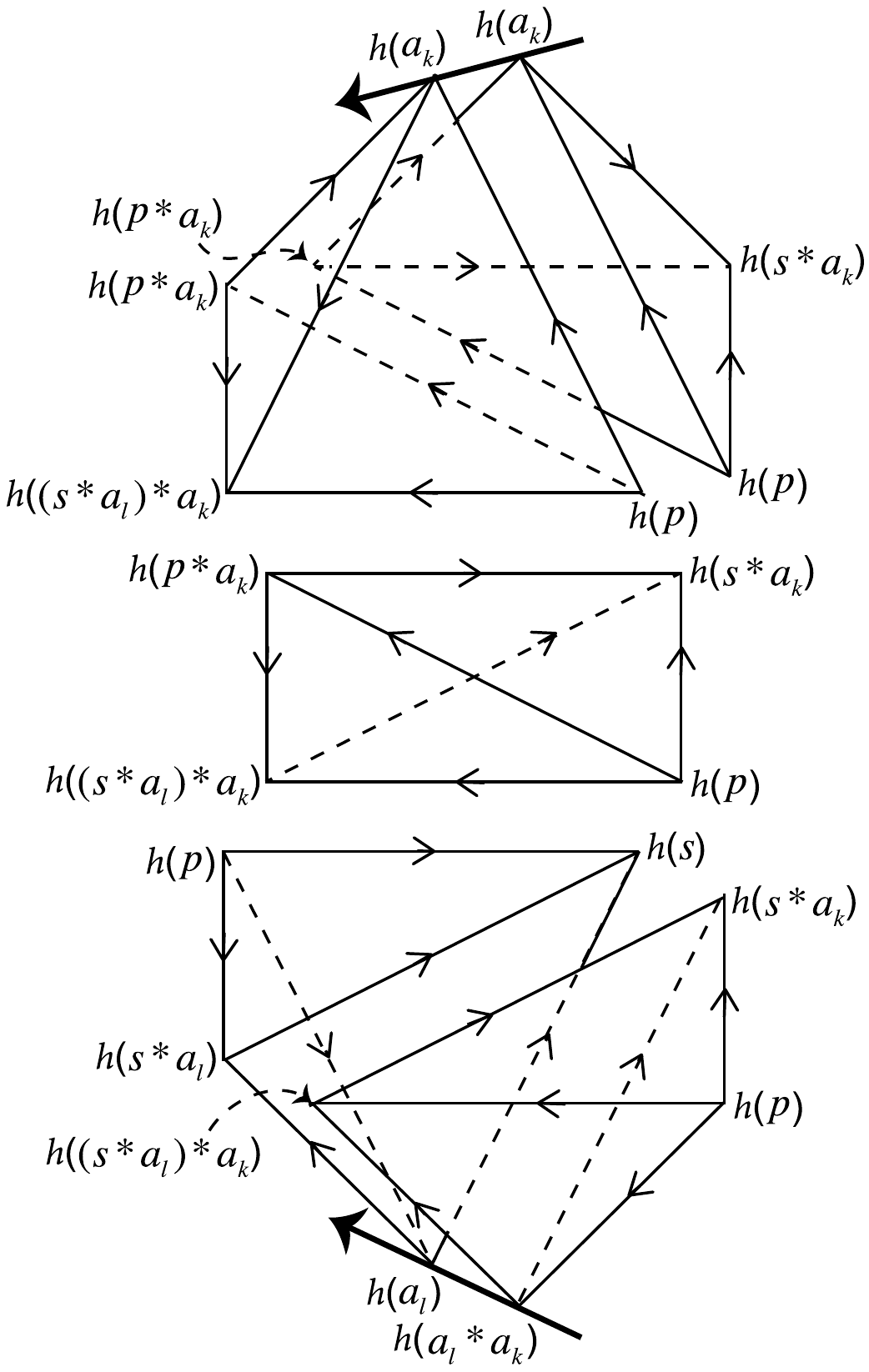}}
 \caption{Five-term triangulation at the crossing in Figure \ref{fig04}}\label{fig06}
\end{figure}

\begin{proposition}[Proposition 2.3 of \cite{Cho14c}]\label{pro} All the tetrahedra in Figure \ref{fig06} are non-degenerate.
\end{proposition}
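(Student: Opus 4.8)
The plan is to invoke the standard criterion that an ideal hyperbolic tetrahedron is non-degenerate exactly when its four ideal vertices are pairwise distinct points of $\mathbb{CP}^1$, equivalently when its shape parameter lies in $\mathbb{C}\setminus\{0,1\}$ (i.e.\ avoids $\{0,1,\infty\}$). So I would first read off from Figure \ref{fig06} that the six ideal vertices ${\rm A}_j,\ldots,{\rm F}_j$ carry the values $h(\cdot)$ of the arc- and region-colors of the shadow-coloring together with $h(p)$, and that the five tetrahedra ${\rm A}_j{\rm B}_j{\rm D}_j{\rm F}_j$, ${\rm B}_j{\rm C}_j{\rm D}_j{\rm F}_j$, ${\rm A}_j{\rm B}_j{\rm C}_j{\rm D}_j$, ${\rm A}_j{\rm B}_j{\rm C}_j{\rm E}_j$, ${\rm A}_j{\rm C}_j{\rm D}_j{\rm E}_j$ have shape parameters given precisely by the arguments of the dilogarithms in $W^j$, namely $\frac{w_c}{w_b},\frac{w_c}{w_d},\frac{w_b}{w_a},\frac{w_d}{w_a}$ and the central one $\frac{w_a w_c}{w_b w_d}$ (the negative crossing being handled symmetrically). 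It then suffices to check that each of these five numbers avoids $\{0,1,\infty\}$.

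For the four tetrahedra containing a pole ${\rm E}_j$ or ${\rm F}_j$, the shape parameter is a ratio $\frac{w_i}{w_j}$ of region variables. Here I would take $w_k=\det(p,s_k)$, so that the ratio is neither $0$ nor $\infty$ precisely because $h(p)$ differs from every region-color, which is the defining condition (\ref{p}) on $p$. That the ratio is not $1$ amounts to $h(s_i)\neq h(s_j)$ for the two regions involved, and I would check using the region rule of Figure \ref{fig03} together with the quandle identity $(s*a_l)*a_k=(s*a_k)*(a_l*a_k)$ that each of the pairs $\{a,d\},\{a,b\},\{c,b\},\{c,d\}$ really is a pair of regions separated by a single arc; then Lemma \ref{lem}, which gives $h(s)\neq h(s*a)$, applies directly. (At the level of an essential solution this is the condition $w_i^{(0)}\neq w_j^{(0)}$ for adjacent regions.)

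The main obstacle is the central tetrahedron ${\rm A}_j{\rm B}_j{\rm C}_j{\rm D}_j$, whose shape parameter $\frac{w_a w_c}{w_b w_d}$ pairs up the \emph{diagonal} regions $\{a,c\}$ and $\{b,d\}$, which are not adjacent, so that neither Lemma \ref{lem} nor adjacency-essentialness excludes $\frac{w_a w_c}{w_b w_d}=1$ for free. Here I would argue by an explicit determinant computation. Fixing lifts to $\mathbb{C}^2$ and writing $w_d=\det(p,s)$, $w_a=\det(p,s*a_k)$, $w_c=\det(p,s*a_l)$, $w_b=\det(p,(s*a_l)*a_k)$, I would combine the invariance $\det(u*c,v*c)=\det(u,v)$, the computation $\det(s,s*a)=\det(s,a)^2$, and the Plücker relation $\det(u,v)\det(x,y)=\det(u,x)\det(v,y)-\det(u,y)\det(v,x)$ in $\mathbb{C}^2$ to rewrite $w_a w_c-w_b w_d$ as an explicit product whose factors involve $\det(p,\cdot)$ and the two arc-colors $a_k,a_l$ meeting at the crossing. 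Non-vanishing of this expression then reduces to the genericity of $p$ in (\ref{p}) together with the distinctness of the relevant colors, which is supplied by Lemma \ref{lem} applied to the regions flanking each arc. I expect essentially all of the work in the proposition to sit in making this Plücker-type identity explicit and in confirming that its vanishing is excluded by the shadow-coloring hypotheses; the other four tetrahedra are routine once the adjacency bookkeeping of the previous paragraph is in place.
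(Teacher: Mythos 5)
Your plan has the right criterion (shape parameter avoiding $\{0,1,\infty\}$, equivalently four pairwise distinct ideal vertices), and your treatment of the central tetrahedron is in fact correct: carrying out your Pl\"{u}cker computation with $w_a=\det(p,s*a_k)$, $w_c=\det(p,s*a_l)=\det(p*a_k,(s*a_l)*a_k)$, $w_b=\det(p,(s*a_l)*a_k)$, $w_d=\det(p,s)=\det(p*a_k,s*a_k)$ gives $w_aw_c-w_bw_d=\det(p,a_k)^2\det(s,a_l)^2\neq 0$, an identity of exactly the brute-force kind the paper itself uses in Lemma \ref{LemR3}. The genuine flaw is in your ``routine'' step for the four outer tetrahedra: the claim that the ratio $w_i/w_j\neq 1$ ``amounts to $h(s_i)\neq h(s_j)$'' is false, because $w_k=\det(p,s_k)$ and two vectors with distinct Hopf images can perfectly well have equal determinants against $p$. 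The correct statement comes from $s*a-s=\det(s,a)\,a$, whence $\det(p,s*a)-\det(p,s)=\det(s,a)\det(p,a)$: what is needed is $h(s)\neq h(a)$ (Lemma \ref{lem}) together with $h(p)\neq h(a)$ (condition (\ref{p})), not $h(s)\neq h(s*a)$. One sees the same thing from the vertex coordinates in Section \ref{sec33}: $w_d/w_a=[h(p*a_k),h(p),h(a_k),h(s*a_k)]$ equals $1$ precisely when $h(p)=h(p*a_k)$ (i.e.\ $h(p)=h(a_k)$) or $h(a_k)=h(s*a_k)$ (i.e.\ $h(a_k)=h(s)$), so for two of the four outer tetrahedra the relevant pair is an arc-color against a region-color, not two region-colors. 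Your parenthetical appeal to essentialness is also circular here: essentialness of ${\bold w}^{(0)}$ is a \emph{conclusion} of Proposition \ref{pro2}, which rests on this proposition.

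Your ``main obstacle'' is moreover illusory, and this is where you diverge from the proof the paper cites (Proposition 2.3 of \cite{Cho14c}). In Figure \ref{fig06} every one of the five tetrahedra, including the central one, comes with four explicitly listed ideal vertices; the central tetrahedron has vertices $h(p)$, $h(p*a_k)$, $h((s*a_l)*a_k)$, $h(s*a_k)$, with $[h(p),h(p*a_k),h((s*a_l)*a_k),h(s*a_k)]=\frac{w_aw_c}{w_bw_d}$ --- its vertices are not the diagonal region-colors. The cited proof is therefore uniform: pairwise distinctness of the four vertices of each tetrahedron follows from (\ref{p}) and Lemma \ref{lem}, using the equivariance $h(x*a)=\widehat{a}(h(x))$ and injectivity of the M\"{o}bius map $\widehat{a}$ (e.g.\ $h(p*a_k)=h(s*a_k)$ would force $h(p)=h(s)$, and $h(p)=h(p*a_k)$ would force $h(p)=h(a_k)$), and four distinct ideal points span a non-degenerate tetrahedron, so no Pl\"{u}cker identity is needed anywhere. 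In short: your central case is correct but unnecessarily heavy, while the step you declared routine is the one that, as justified, fails and needs the determinant identity above (or the cross-ratio reading) to close.
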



According to Section 2 of \cite{Kabaya12} and the proof of Theorem 5 of \cite{Kabaya14}, the five-term triangulation defined by Figure \ref{fig06}
induces the given representation\footnote{The triangulation of \cite{Kabaya12} is different from ours.
However, the fundamental domain obtained by the five-term triangulation coincides with that of \cite{Kabaya12}, 
so it induces the same representation. (Our triangulation is obtained by choosing different subdivision of the same fundamental domain. 
See Section 2.2 of \cite{Cho14c} for details.)} 
$\rho:\pi_1(L)\rightarrow{\rm PSL}(2,\mathbb{C})$ and the shape parameters of this triangulation satisfy
the gluing equations of all edges. (The face-pairing maps are the isomorphisms induced by 
the M\"{o}bius transformations of ${a_1},\ldots,a_r\in{\rm PSL}(2,\mathbb{C})$. {Note that this construction is based on
the construction method developed at \cite{Neumann99} and \cite{Zickert09}.})
Furthermore, the representation $\rho$ is boundary-parabolic, 
which implies the shape-parameters satisfy the hyperbolicity equations of the five-term triangulation.


\subsection{Formula of the solution ${\bold w}^{(0)}$}\label{sec33}
Consider the crossings in Figure \ref{fig04} and the tetrahedra in Figure \ref{fig06}. For the positive crossing,
we assign shape parameters to the edges as follows:
\begin{itemize} 
\item $\displaystyle\frac{w_d}{w_a}$ to $(h(a_k), h(s*a_k))$ of $(h(p*a_k),h(p),h(a_k), h(s*a_k))$, 
\item $\displaystyle\frac{w_b}{w_c}$ to $(h(a_k), h((s*a_l)*a_k))$ of $-(h(p*a_k),h(p),h(a_k), h((s*a_l)*a_k))$, 
\item $\displaystyle\frac{w_b}{w_a}$ to $(h(p), h(a_l*a_k))$ of $(h(p), h(a_l*a_k),h(s*a_k), h((s*a_l)*a_k))$ and 
\item $\displaystyle\frac{w_d}{w_c}$ to $(h(p), h(a_l))$ of $-(h(p), h(a_l),h(s), h(s*a_l))$, respectively.
\end{itemize}
On the other hand, for the negative crossing,
we assign shape parameters to the edges as follows:
\begin{itemize} 
\item $\displaystyle\frac{w_a}{w_b}$ to $(h(a_k), h((s*a_l)*a_k))$ of $-(h(p),h(p*a_k),h(a_k), h((s*a_l)*a_k))$, 
\item $\displaystyle\frac{w_c}{w_d}$ to $(h(a_k), h(s*a_k))$ of $(h(p),h(p*a_k),h(a_k), h(s*a_k))$, 
\item $\displaystyle\frac{w_c}{w_b}$ to $(h(p), h(a_l*a_k))$ of $(h(p), h(a_l*a_k), h((s*a_l)*a_k),h(s*a_k))$ and 
\item $\displaystyle\frac{w_a}{w_d}$ to $(h(p), h(a_l))$ of $-(h(p), h(a_l),h(s*a_l), h(s))$, respectively.
\end{itemize}
Note that these assignments coincide with the one defined in Figure \ref{fig7}.

\begin{proposition}[Theorem 1.1 of \cite{Cho14c}]\label{pro2}
For a region of $D$ with region-color $s_k$ and region-variable $w_k$, we define
\begin{equation}\label{main}
w_k^{(0)}:=\det(p,s_k).
\end{equation}
Then, 
$w_k^{(0)}\not= 0$ 
and ${\bold w^{(0)}}=(w_1^{(0)},\ldots,w_n^{(0)})$ is an {essential} solution of the hyperbolicity equations in $\mathcal{I}$ of (\ref{defH}). 
Furthermore, the solution satisfies $\rho_{\bold w^{(0)}}=\rho$ up to conjugation and
\begin{equation}\label{volume}
W_0(\bold w^{(0)})\equiv i(\vol(\rho)+i\,\cs(\rho))~~({\rm mod}~\pi^2).
\end{equation}
\end{proposition}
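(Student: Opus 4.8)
The plan is to derive all four assertions from the determinant calculus underlying the shadow-coloring, the key point being that the algebraically defined numbers $w_k^{(0)}=\det(p,s_k)$ reproduce exactly the cross-ratios that define the shape parameters in Section \ref{sec33}. The non-vanishing is immediate: $\det(p,s_k)=0$ precisely when $p$ and $s_k$ are parallel, i.e. when $h(p)=h(s_k)$, so condition (\ref{p}) gives $w_k^{(0)}=\det(p,s_k)\neq 0$.

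The core step is to check that every shape parameter listed in Section \ref{sec33} equals the corresponding ratio of $w_k^{(0)}$'s. I would do this using only the cross-ratio identity $[h(v_0),h(v_1),h(v_2),h(v_3)]=\frac{\det(v_0,v_3)\det(v_1,v_2)}{\det(v_0,v_2)\det(v_1,v_3)}$, the invariance $\det(a*c,b*c)=\det(a,b)$, and the quandle axiom $a_k*a_k=a_k$. For instance, reading off the region-colors of Figure \ref{fig04}(a) (so $w_a^{(0)}=\det(p,s*a_k)$ and $w_d^{(0)}=\det(p,s)$), the edge $(h(a_k),h(s*a_k))$ of the tetrahedron $(h(p*a_k),h(p),h(a_k),h(s*a_k))$ carries the cross-ratio
\[
\frac{\det(p*a_k,s*a_k)\det(p,a_k)}{\det(p*a_k,a_k)\det(p,s*a_k)}=\frac{\det(p,s)}{\det(p,s*a_k)}=\frac{w_d^{(0)}}{w_a^{(0)}},
\]
where I used $\det(p*a_k,a_k)=\det(p*a_k,a_k*a_k)=\det(p,a_k)$ and $\det(p*a_k,s*a_k)=\det(p,s)$; the remaining edges and the negative crossing are handled identically. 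Since these cross-ratios are precisely the shape parameters of Figure \ref{fig7}, the gluing equations hold automatically---the tetrahedra assembled from the shadow-coloring form a genuine geometric triangulation whose face-pairings are the M\"obius transformations $\widehat{a_k}$, so the product of shape parameters around each edge is forced to equal $1$---and by Proposition \ref{pro21} this is exactly $\mathcal{I}$. The same construction recovers $\rho_{\bold w^{(0)}}=\rho$ up to conjugation, because the face-pairings are the $\widehat{a_k}=\rho(\text{meridian})$. Finally, essentialness ($w_k^{(0)}\neq 0$ and $w_k^{(0)}\neq w_m^{(0)}$ for adjacent regions) is equivalent to no shape parameter lying in $\{0,1,\infty\}$, which is guaranteed by Lemma \ref{lem} and the non-degeneracy of Proposition \ref{pro}.

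The hardest step is the complex volume identity (\ref{volume}). Here I would invoke Neumann's formula (\cite{Neumann04}, \cite{Zickert09}) expressing $i(\vol(\rho)+i\,\cs(\rho))$ as a sum of extended dilogarithm contributions over the tetrahedra of the triangulation, evaluated in the extended Bloch group so that the Chern-Simons part is captured and the answer is well-defined modulo $\pi^2$. The task is then to match this geometric sum term-by-term against $W_0$: each crossing potential $W^j$ of Figure \ref{pic01} supplies the five dilogarithms of the five-term subdivision of the octahedron at that crossing, while the Legendre-type correction $-\sum_k(w_k\frac{\partial W}{\partial w_k})\log w_k$ in passing from $W$ to $W_0$ supplies exactly the flattening and branch data required by Neumann's prescription; since $\bold w^{(0)}$ satisfies $\mathcal{I}$, those correction terms vanish upon evaluation. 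I expect the bookkeeping of logarithm branches and the $\frac{\pi^2}{6}$ constants---so that both sides agree modulo $\pi^2$ independently of the chosen branches (the Lemma 2.1's of \cite{Cho13a} and \cite{Cho13c})---to be the main obstacle, as it is precisely where the analytic subtleties of the extended Bloch group enter.
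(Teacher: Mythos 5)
Your algebraic core matches the paper's proof exactly: the non-vanishing of $w_k^{(0)}$ from (\ref{p}), and the cross-ratio computation via $\det(a*c,b*c)=\det(a,b)$ and $a_k*a_k=a_k$ showing that the ratios of the $w_k^{(0)}$ reproduce the shape parameters of Section \ref{sec33}, is precisely the paper's ``direct calculation'' (your displayed computation is the paper's first bullet verbatim). Two points, however, need attention. First, you conclude that the gluing equations hold ``automatically'' and that by Proposition \ref{pro21} this gives $\mathcal{I}$; but $\mathcal{I}$ in (\ref{defH}) encodes the full hyperbolicity equations, i.e.\ the gluing equations \emph{together with the completeness condition}. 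A geometrically realized triangulation forces the edge products to be $1$, but completeness is a separate requirement: the paper obtains it from the boundary-parabolicity of $\rho$ (and, rather than treating the geometric realization as self-evident, outsources it to Section 2 of \cite{Kabaya12} and the proof of Theorem 5 of \cite{Kabaya14}, as does the claim $\rho_{\bold w^{(0)}}=\rho$). Your proposal silently skips the completeness half.

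Second, and more seriously, your sketch of (\ref{volume}) contains a genuine error: you assert that since $\bold w^{(0)}$ satisfies $\mathcal{I}$, the correction terms $-\sum_k\bigl(w_k\frac{\partial W}{\partial w_k}\bigr)\log w_k$ ``vanish upon evaluation.'' They do not. The equations of $\mathcal{I}$ say $\exp\bigl(w_k\frac{\partial W}{\partial w_k}\bigr)=1$, i.e.\ $w_k\frac{\partial W}{\partial w_k}=2\pi i\,n_k$ with $n_k\in\mathbb{Z}$ generally nonzero, so $W_0(\bold w^{(0)})=W(\bold w^{(0)})-\sum_k 2\pi i\,n_k\log w_k^{(0)}$; these nonzero integer terms are exactly the flattening data your own previous sentence (correctly) says the modification supplies. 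If they vanished, $W_0$ would equal $W$ at the solution and the passage from (\ref{defW}) to (\ref{defW_0}) would be pointless. Note also that the paper does not reprove this identity at all: its proof of Proposition \ref{pro2} simply cites Theorem 1.2 of \cite{Cho13c} for (\ref{volume}), so your extended-Bloch-group matching is an attempt to reconstruct that reference, and as written it would fail at this step. Your justification of essentialness (adjacent-region ratios appear among the shape parameters, which lie outside $\{0,1,\infty\}$ by Proposition \ref{pro}) is fine and in fact more explicit than the paper's one-line assertion.
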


\begin{proof}

The first property $w_k^{(0)}\not= 0$ is trivial from the definition of $p$ in (\ref{p}).
From the discussion below Proposition \ref{pro},
the shape parameters of the five-term triangulation defined by Figure \ref{fig06} satisfy the hyperbolicity equations
and the fundamental domain induces the boundary-parabolic representation $\rho$.

On the other hand, direct calculation shows the values $w_k^{(0)}$ defined in (\ref{main}) determines 
the same shape parameter of the five-term triangulation defined by Figure \ref{fig06}. 
Specifically, for the first two cases of the positive crossing, the shape parameters
assigned to edges $(h(a_k), h(s*a_k))$ and $(h(a_k), h((s*a_l)*a_k))$ are the cross-ratios
\begin{eqnarray*}
[h(p*a_k),h(p),h(a_k), h(s*a_k)]&=&\frac{\det(p*a_k,s*a_k)\det(p,a_k)}{\det(p*a_k,a_k)\det(p,s*a_k)}\\
&=&\frac{\det(p,s)\det(p,a_k)}{\det(p,a_k)\det(p,s*a_k)}=\frac{w_d^{(0)}}{w_a^{(0)}},\\
{[h}(p*a_k),h(p),h(a_k), h((s*a_l)*a_k)]^{-1}
&=&\frac{\det(p*a_k,a_k)\det(p,(s*a_l)*a_k)}{\det(p*a_k,(s*a_l)*a_k)\det(p,a_k)}\\
&=&\frac{\det(p,a_k)\det(p,(s*a_l)*a_k)}{\det(p,s*a_l)\det(p,a_k)}=\frac{w_b^{(0)}}{w_c^{(0)}},
\end{eqnarray*}
respectively, and all the other cases can be verified by the same way. 
From Proposition \ref{pro21}, we conclude that ${\bold w^{(0)}}=(w_1^{(0)},\ldots,w_n^{(0)})$ is an {essential} solution of $\mathcal{I}$.

Finally, the identity (\ref{volume}) was already proved in Theorem 1.2 of \cite{Cho13c}.

\end{proof} 

{In Appendix \ref{app}, we will show that any {essential} solution of $\mathcal{I}$ can be constructed by certain shadow-coloring.}
 
\section{Reidemeister transformations on the solution}\label{sec4}

In this section, we show how the solution ${\bold w^{(0)}}$ of $\mathcal{I}$ defined in Proposition \ref{pro2}
changes under the Reidemeister moves. We assume all the region-colorings in this and later sections satisfy Corollary \ref{cor32}
{so that the original and the transformed solutions are both essential}.

At first, we introduce very simple, but useful lemma.
{Recall that, according to Proposition \ref{pro21}, the set $\mathcal{I}$ defined in (\ref{defH}) is the set of the hyperbolicity equations.} The following lemma shows the hyperbolicity equations do not change
under the change of the orientation.

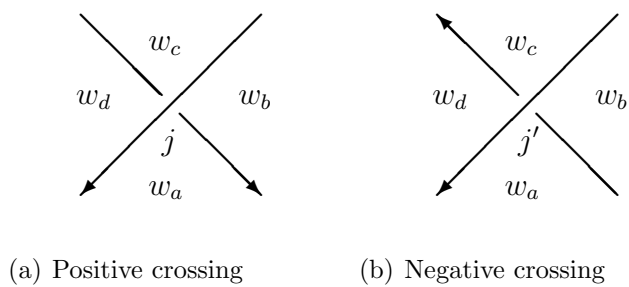
\begin{figure}[h]
\centering\setlength{\unitlength}{0.6cm}  \thicklines
\subfigure[Positive crossing]{\begin{picture}(6,5.5)
    \put(6,5){\vector(-1,-1){4}}
    \put(2,5){\line(1,-1){1.8}}
    \put(4.2,2.8){\vector(1,-1){1.8}}
    \put(3.8,2){$j$}
    \put(3.5,4.2){$w_c$}
    \put(1.9,3){$w_d$}
    \put(3.5,1){$w_a$}
    \put(5.5,3){$w_b$}
  \end{picture}}\hspace{1cm}
\subfigure[Negative crossing]{\begin{picture}(6,5.5)  
  \setlength{\unitlength}{0.6cm}\thicklines
    \put(6,5){\vector(-1,-1){4}}
    \put(3.8,3.2){\vector(-1,1){1.8}}
    \put(4.2,2.8){\line(1,-1){1.8}}
    \put(3.8,2){$j'$}
    \put(3.5,4.2){$w_c$}
    \put(1.9,3){$w_d$}
    \put(3.5,1){$w_a$}
    \put(5.5,3){$w_b$}
  \end{picture}}
 \caption{Change of orientation}\label{fig10}
\end{figure}

\begin{lemma}\label{lem40}
For the potential function $W^j(w_a,w_b,w_c,w_d)$ and $W^{j'}(w_a,w_b,w_c,w_d)$ of Figure \ref{fig10}(a) and (b), respectively,
we have
$$\exp(w_k\frac{\partial W^j}{\partial w_k})=\exp(w_k\frac{\partial W^{j'}}{\partial w_k}),$$
for $k=a,b,c,d$.
\end{lemma}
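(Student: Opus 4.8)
The plan is to prove the four identities by direct computation, using only the derivative of the dilogarithm. Since $\frac{d}{dz}\li(z)=-\frac{\log(1-z)}{z}$, for any monomial $m=\prod_i w_i^{e_i}$ in the region variables one has $w_k\frac{\partial}{\partial w_k}\li(m)=-e_k\log(1-m)$, where $e_k$ is the exponent of $w_k$ in $m$, while the bilinear term $\log\frac{w_b}{w_a}\log\frac{w_d}{w_a}$ contributes only elementary logarithms. Hence each $w_k\frac{\partial W}{\partial w_k}$ is a $\mathbb{Z}$-combination of terms $\log(1-m)$ and $\log w_i$, so its exponential is a rational function of $w_a,w_b,w_c,w_d$. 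Proving the lemma reduces to checking that, for each $k\in\{a,b,c,d\}$, these two rational functions coincide (equivalently, that the logarithmic derivatives agree modulo $2\pi i$). Conceptually the result is expected: by Proposition \ref{pro21} the equations $\exp(w_k\frac{\partial W}{\partial w_k})=1$ are the edge-gluing equations of the five-term triangulation, and the shape-parameter assignment of Figure \ref{fig7} does not depend on the orientation; the lemma makes this orientation-independence precise at the level of a single crossing.

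First I would take $W^j$ to be the positive-crossing potential of Figure \ref{pic01}(a), read off from Figure \ref{fig10}(a), and compute its four logarithmic derivatives. After clearing denominators these simplify to
\[
\exp\!\left(w_a\frac{\partial W^j}{\partial w_a}\right)=\frac{(w_a-w_b)(w_a-w_d)}{w_bw_d-w_aw_c},\qquad \exp\!\left(w_c\frac{\partial W^j}{\partial w_c}\right)=\frac{(w_b-w_c)(w_d-w_c)}{w_bw_d-w_aw_c},
\]
and the cases $k=b,d$ follow from the evident symmetry $w_b\leftrightarrow w_d$ of $W^j$, giving $\frac{w_bw_d-w_aw_c}{(w_b-w_c)(w_a-w_b)}$ and its $w_b\leftrightarrow w_d$ image.

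The one step requiring care is to write $W^{j'}$ correctly. The crossing of Figure \ref{fig10}(b) is negative, but its orientation is not the one used in the template of Figure \ref{pic01}(b); it is Figure \ref{fig10}(a) with the under-strand reversed. I would bring it into the standard negative-crossing position of Figure \ref{pic01}(b) by a $90^\circ$ rotation of the diagram (an ambient isotopy, which leaves the potential unchanged) and then read off which variable occupies each slot of the template. The resulting substitution gives
\[
W^{j'}=\li(\tfrac{w_b}{w_a})+\li(\tfrac{w_b}{w_c})-\li(\tfrac{w_b w_d}{w_a w_c})-\li(\tfrac{w_a}{w_d})-\li(\tfrac{w_c}{w_d})+\tfrac{\pi^2}{6}-\log\tfrac{w_a}{w_d}\log\tfrac{w_c}{w_d}.
\]
Pinning down this relabeling of the four regions is the main obstacle; everything afterward is mechanical.

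Finally I would differentiate $W^{j'}$ by the same rule and simplify. For instance the $k=a$ and $k=c$ computations produce $\frac{(w_a-w_b)(w_d-w_a)}{w_aw_c-w_bw_d}$ and $\frac{(w_c-w_b)(w_d-w_c)}{w_aw_c-w_bw_d}$, which equal the positive-crossing factors above once the sign common to numerator and denominator is cancelled; the cases $k=b,d$ match in the same fashion. Since all four exponentials agree, the lemma follows. I expect the only genuine difficulty to be the correct determination of $W^{j'}$ from the reversed orientation; once that relabeling is fixed, the proof is a routine term-by-term comparison, which is presumably why the statement is described as almost trivial.
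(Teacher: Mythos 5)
Your proof is correct and takes essentially the same route as the paper, which also verifies the four identities by direct computation of the exponentiated logarithmic derivatives (for instance, both sides equal $\frac{(w_b-w_a)(w_d-w_a)}{w_bw_d-w_aw_c}$ for $k=a$, exactly as you found after cancelling the common sign). Your explicit determination of $W^{j'}$ by rotating Figure \ref{fig10}(b) into the negative-crossing template of Figure \ref{pic01}(b), giving the substitution $(w_a,w_b,w_c,w_d)\mapsto(w_d,w_a,w_b,w_c)$, is the one step the paper leaves implicit, and you carried it out correctly.
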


\begin{proof}
It is easily verified by direct calculation. For example,
$$\exp(w_a\frac{\partial W^j}{\partial w_a})=\frac{(w_b-w_a)(w_d-w_a)}{w_bw_d-w_cw_a}
=\exp(w_a\frac{\partial W^{j'}}{\partial w_a}).$$

\end{proof}

\subsection{Reidemeister 1st move}

Consider the Reidemeister 1st moves in Figure \ref{LocalMove1}. 
Let $\alpha\in\mathcal{P}$ be the arc-color, $s,s*\alpha,(s*\alpha)*\alpha\in\mathcal{P}$ be 
the region-colors and $w_a,w_b,w_c$ be the variables of the potential function.
Then, by (\ref{main}),
\begin{equation}\label{eq41}
w_a^{(0)}=\det(p,s),~w_b^{(0)}=\det(p,s*\alpha), ~w_c^{(0)}=\det(p,(s*\alpha)*\alpha).
\end{equation}

\begin{figure}[h]\centering
\includegraphics[scale=1.2]{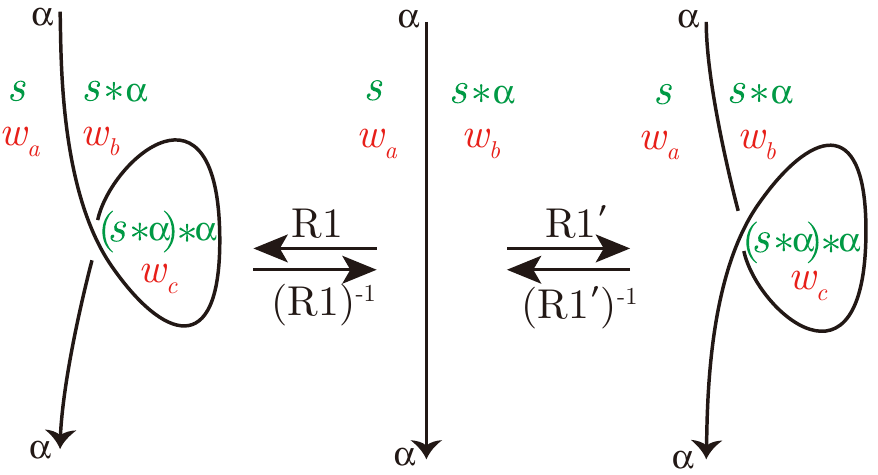}
 \caption{First Reidemeister moves}\label{LocalMove1}
\end{figure}

\begin{lemma}\label{LemR1}
The values $w_a^{(0)},w_b^{(0)},w_c^{(0)}$ defined in (\ref{eq41}) satisfy
$$w_c^{(0)}=2w_b^{(0)}-w_a^{(0)}.$$
\end{lemma}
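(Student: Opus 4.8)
The plan is to reduce the whole statement to a single nilpotency observation about the matrix that implements the quandle operation $*\alpha$, and then exploit the linearity of $\det(p,\cdot)$ in its second slot. First I would fix a representative $\alpha=(\gamma\ \delta)\in\mathbb{C}^2\setminus\{0\}$ of the arc-color. By the definition of $*$ recalled in Section \ref{sec3}, the map $s\mapsto s*\alpha$ is right multiplication by
\begin{equation*}
M_\alpha=\begin{pmatrix}1+\gamma\delta & \delta^2\\ -\gamma^2 & 1-\gamma\delta\end{pmatrix}.
\end{equation*}
A direct inspection gives $\det M_\alpha=1$ and $\operatorname{tr}M_\alpha=2$, so by the Cayley--Hamilton theorem $(M_\alpha-I)^2=0$, equivalently $M_\alpha^2=2M_\alpha-I$. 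This is the only nontrivial computation, and it is merely a $2\times2$ trace/determinant check; the real content of the lemma is that $M_\alpha$ is unipotent.

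Next I would apply this to the chain of region-colors. Since $(s*\alpha)*\alpha=sM_\alpha^2=s(2M_\alpha-I)=2(s*\alpha)-s$ as an identity in $\mathbb{C}^2$, it suffices to know that this identity holds for the representatives actually used to define $w_a^{(0)},w_b^{(0)},w_c^{(0)}$. That is guaranteed by the earlier remark that the region-color representatives are uniquely determined (using $s*(\pm a)=s*a$), so the sign ambiguity of $\mathcal{P}=\mathbb{C}^2\setminus\{0\}/\pm$ never enters and the three colors in the statement are genuinely $s$, $sM_\alpha$, $sM_\alpha^2$.

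Finally, recalling that $\det(p,\cdot)$ is linear in its second argument, I would apply it to $(s*\alpha)*\alpha=2(s*\alpha)-s$ to obtain
\begin{equation*}
w_c^{(0)}=\det(p,(s*\alpha)*\alpha)=2\det(p,s*\alpha)-\det(p,s)=2w_b^{(0)}-w_a^{(0)},
\end{equation*}
which is exactly the claim. The main obstacle, such as it is, is purely bookkeeping: verifying that the three region-colors of (\ref{eq41}) are indeed the successive $*\alpha$-images at the representative level, so that the linear relation transfers through $\det(p,\cdot)$ unchanged. Once $M_\alpha$ is seen to be unipotent, the algebra is immediate and no branch or sign subtleties remain.
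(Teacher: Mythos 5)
Your proof is correct and follows essentially the same route as the paper: both apply the Cayley--Hamilton theorem to the parabolic matrix $A=M_\alpha$ (with $\operatorname{tr}=2$, $\det=1$) to get $A^2=2A-I$, and then use linearity of $\det(p,\cdot)$ in the second slot to conclude $w_c^{(0)}=2w_b^{(0)}-w_a^{(0)}$. Your added remarks on unipotency and on the representatives being well-defined (via $s*(\pm a)=s*a$) are consistent with what the paper establishes just before Section 3.2.
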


\begin{proof}
Using the identification (\ref{matrixcc}), let
\begin{equation*}
  \alpha=\left(\begin{array}{cc}\alpha_1 & \alpha_2\end{array}\right)  
  \longleftrightarrow A=\left(\begin{array}{cc}1+\alpha_1\alpha_2 &\alpha_2^2  \\-\alpha_1^2 & 1-\alpha_1\alpha_2\end{array}\right).
\end{equation*}
Then $s*\alpha=sA\in\mathcal{P}$ and $(s*\alpha)*\alpha=sA^2\in\mathcal{P}$ holds by the definition of the operation $*$. 
Furthermore, by the Cayley-Hamilton theorem, the matrix $A$ satisfies
$$A^2-2A+I=0,$$
where $I$ is the $2\times 2$ identity matrix. Using these, we obtain
$$w_c^{(0)}=\det(p,(s*\alpha)*\alpha)=\det(p,sA^2)=2\det(p,sA )-\det(p,s I )=2w_b^{(0)}-w_a^{(0)}.$$
\end{proof}

\subsection{Reidemeister 2nd moves}

Consider the Reidemeister 2nd moves in Figure \ref{LocalMove2}. 
Let $\alpha$, $\beta$, $\alpha*\beta\in\mathcal{P}$ be the arc-colors, 
$s$, $s*\alpha$, $s*\beta$, $(s*\alpha)*\beta\in\mathcal{P}$ be 
the region-colors and $w_a,\ldots,w_e$ be the variables of the potential function.
Then, by (\ref{main}),
\begin{equation*}
w_a^{(0)}=\det(p,s*\beta),~w_b^{(0)}=w_d^{(0)}=\det(p,(s*\alpha)*\beta), ~w_c^{(0)}=\det(p,s*\alpha),~w_e^{(0)}=\det(p,s)
\end{equation*}
for the case of R2 move in Figure \ref{LocalMove2}(a), and
\begin{equation*}
w_a^{(0)}=\det(p,s*\alpha),~w_b^{(0)}=w_d^{(0)}=\det(p,s), ~w_c^{(0)}=\det(p,s*\beta),~w_e^{(0)}=\det(p,(s*\alpha)*\beta)
\end{equation*}
for the case of R2$'$ move in Figure \ref{LocalMove2}(b).

\begin{figure}[h]
\centering
\subfigure[R2 move]{\includegraphics[scale=1.2]{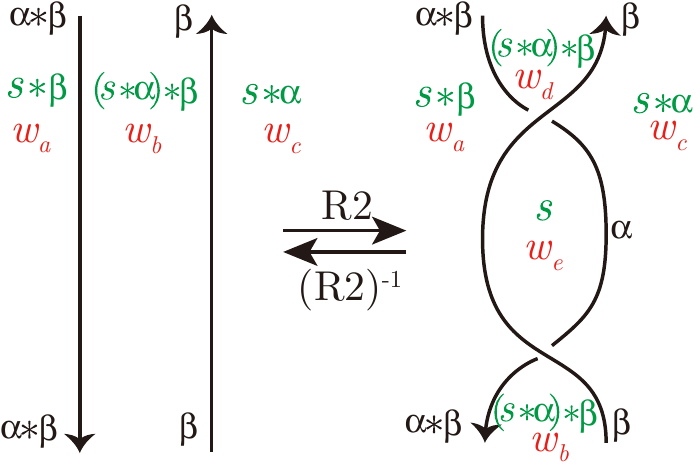}}\hspace{0.5cm}
\subfigure[R2$'$ move]{\includegraphics[scale=1.2]{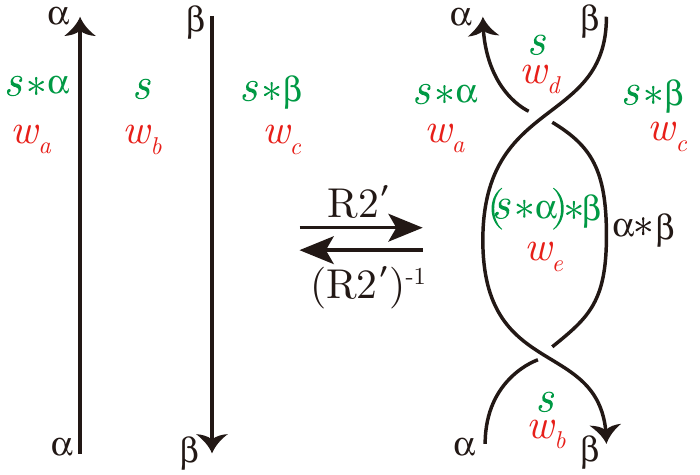}}
 \caption{Second Reidemeister moves}\label{LocalMove2}
\end{figure}

\begin{lemma}\label{LemR2} Let $T(W)(...,w_a,w_b,w_c,w_d,w_e,...)$ be the potential function (\ref{R2a}) (or (\ref{R2c})) of the diagram in Figure \ref{LocalMove2}(a) (or (b)) after applying $R2$ (or $R2'$) move. Then $w_d^{(0)}=w_b^{(0)}$ and $w_e^{(0)}$ is uniquely determined from the values of the parameters around the region of $w_b$ by the equation 
\begin{equation*}
\exp(w_b\frac{\partial T(W)}{\partial w_b})=1.
\end{equation*}
\end{lemma}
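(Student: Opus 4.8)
The plan is to verify the claimed relations directly from the definition $w_k^{(0)}=\det(p,s_k)$ together with the explicit shape-parameter assignment, treating the R2 and R2$'$ cases in parallel. First I would establish $w_d^{(0)}=w_b^{(0)}$: this is immediate, since by the region-coloring rule the two regions carrying $w_b$ and $w_d$ in Figure \ref{LocalMove2} receive the \emph{same} region-color (both equal to $(s*\alpha)*\beta$ in case (a), and both equal to $s$ in case (b)), so their determinants with $p$ coincide. This is exactly why the Reidemeister transformation of the potential function in Definition \ref{def1} replaces $w_b$ by $w_d$ on the crossings adjacent to the $w_d$-region; the two variables name one geometric region after the move.

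Next I would turn to the characterization of $w_e^{(0)}$. The point is that $w_e$ is associated to a newly created region (colored $s$ in case (a), or $(s*\alpha)*\beta$ in case (b)), so it is a genuinely new variable not present before the move, while all of $w_a^{(0)},w_b^{(0)},w_c^{(0)}$ are inherited from the pre-move diagram. Since $T(W)$ is the potential function of the post-move diagram, Proposition \ref{pro21} guarantees that the transformed solution must satisfy every hyperbolicity equation $\exp\bigl(w_k\frac{\partial T(W)}{\partial w_k}\bigr)=1$. The equation obtained by differentiating with respect to $w_b$ is the one local to the region where the two new crossings meet, and I would extract it explicitly: by Lemma \ref{lem40} the two new crossings (one positive, one negative) contribute factors to $\exp(w_b\frac{\partial T(W)}{\partial w_b})$ that are rational functions of the surrounding variables $w_a,w_c,w_e$ (and $w_b=w_d$). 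Setting this product equal to $1$ yields a linear equation in $w_e$, which I solve to see that $w_e^{(0)}$ is uniquely determined by $w_a^{(0)},w_b^{(0)},w_c^{(0)}$; the explicit form is recorded in Lemma \ref{lem53}.

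The main verification, and the only real content, is that the value $w_e^{(0)}=\det(p,s)$ (resp. $\det(p,(s*\alpha)*\beta)$) coming from the shadow-coloring actually \emph{solves} this equation. For this I would compute $\exp(w_b\frac{\partial T(W)}{\partial w_b})$ as a cross-ratio using the determinant identities $\det(a*c,b*c)=\det(a,b)$ and the cross-ratio formula from Section \ref{sec3}, exactly as in the proof of Proposition \ref{pro2}; the factor from each adjacent crossing is a ratio of determinants $\det(p,\cdot)$, and the whole product telescopes to $1$ precisely because the four region-colors $s,s*\alpha,s*\beta,(s*\alpha)*\beta$ around the bigon form a compatible quandle square. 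Thus uniqueness follows from the linearity of the resulting equation in $w_e$, and existence of the correct root follows from the determinant computation.

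The hard part will be bookkeeping rather than conceptual: one must correctly read off which variables border the $w_b$-region in the post-move diagram, assign to each adjacent crossing the right partial-derivative factor (with the sign dictated by whether the crossing is positive or negative, using Lemma \ref{lem40} to ignore orientation), and confirm that the telescoping of determinants indeed closes up. I expect the linearity of $\exp(w_b\frac{\partial T(W)}{\partial w_b})=1$ in $w_e$ — which is what makes ``uniquely determined'' meaningful — to be the step requiring the most care, since it relies on the specific monomial structure of the dilogarithm arguments in $V_{R2}$ and $V_{R2'}$.
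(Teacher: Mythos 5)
Your proposal is correct, and the key step---uniqueness of $w_e^{(0)}$ via linearity of $\exp(w_b\frac{\partial T(W)}{\partial w_b})=1$ in $w_e$, since $w_e$ enters $T(W)$ only through $V_{R2}$ (resp.\ $V_{R2'}$) and $\exp(w_b\frac{\partial V_{R2}}{\partial w_b})=\frac{w_aw_c-w_bw_e}{(w_c-w_b)(w_a-w_b)}$---is exactly the paper's argument, as is the reduction of the $R2'$ case to the $R2$ case by Lemma \ref{lem40}. You deviate in two places. First, you obtain $w_d^{(0)}=w_b^{(0)}$ from the region-coloring (both regions carry the color $(s*\alpha)*\beta$, resp.\ $s$, so the determinants with $p$ agree); the paper instead derives it from the hyperbolicity equation $\exp(w_e\frac{\partial T(W)}{\partial w_e})=\frac{w_aw_c-w_bw_e}{w_aw_c-w_dw_e}=1$. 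Both are valid here, but the paper's route shows the identity is forced for \emph{any} essential solution, independently of any coloring, which is the form reused in Lemma \ref{uniquelemma}; your coloring argument establishes it only for the shadow-coloring solution (for which it is the content of the displayed formulas preceding the lemma). Second, your ``main verification''---a determinant/cross-ratio telescoping computation showing that $w_e^{(0)}=\det(p,s)$ (resp.\ $\det(p,(s*\alpha)*\beta)$) actually satisfies the equation---is redundant: Proposition \ref{pro2} applied to the post-move diagram already guarantees that the shadow-coloring values solve every equation of $\mathcal{I}$ for $T(W)$, so existence is automatic and only uniqueness needs proof (note also that this is Proposition \ref{pro2}, not Proposition \ref{pro21}, which merely identifies $\mathcal{I}$ with the hyperbolicity equations). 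Carrying out your telescoping computation would amount to reproving a special case of Proposition \ref{pro2}; it is harmless but not needed, and dropping it reduces your argument to essentially the paper's proof.
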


\begin{proof}
At first, $$\exp(w_e\frac{\partial T(W)}{\partial w_e})=\frac{w_aw_c-w_bw_e}{w_aw_c-w_dw_e}=1$$
induces $w_d^{(0)}=w_b^{(0)}$.

Consider the case of Figure \ref{LocalMove2}(a). The variable $w_e$ of the potential function $T(W)$ appears 
only inside the function $V_{R2}(w_a, w_b, w_c, w_d, w_e)$ defined in Section \ref{sec12}, and direct calculation shows
$$\exp(w_b\frac{\partial V_{R2}}{\partial w_b})
=\frac{w_a w_c-w_b w_e}{(w_c-w_b)(w_a-w_b)}.$$
Therefore, the equation $\exp(w_b\frac{\partial T(W)}{\partial w_b})=1$ is linear with respect to $w_e$
and it determines $w_e^{(0)}$ uniquely.

The case of Figure \ref{LocalMove2}(b) is trivial from Lemma \ref{lem40} and the above.

\end{proof}

Remark that the equation $\exp(w_d\frac{\partial T(W)}{\partial w_d})=1$ also determines the same value $w_e^{(0)}$. 

\subsection{Reidemeister 3rd move}

Consider the Reidemeister 3rd move in Figure \ref{LocalMove3}. 
Let $\alpha$, $\beta$, $\gamma$, $\beta*\gamma$, $(\alpha*\beta)*\gamma=(\alpha*\gamma)*(\beta*\gamma)\in\mathcal{P}$ be the arc-colors, 
$s$, $s*\alpha$, $s*\beta$, $s*\gamma$, $(s*\alpha)*\beta$, $(s*\alpha)*\gamma$, $(s*\beta)*\gamma$, $((s*\alpha)*\beta)*\gamma\in\mathcal{P}$ be 
the region-colors and $w_a,\ldots,w_g,w_h$ be the variables of the potential function.
Then, by (\ref{main}),
\begin{align}
w_a^{(0)}=\det(p,s),~w_b^{(0)}=\det(p,s*\gamma),~w_c^{(0)}=\det(p,(s*\beta)*\gamma),\label{eq43}\\
w_d^{(0)}=\det(p,((s*\alpha)*\beta)*\gamma),~w_e^{(0)}=\det(p,(s*\alpha)*\beta),\nonumber\\
w_f^{(0)}=\det(p,s*\alpha),~w_g^{(0)}=\det(p,s*\beta),~w_h^{(0)}=\det(p,(s*\alpha)*\gamma).\nonumber
\end{align}

\begin{figure}[h]\centering
\includegraphics[scale=1.3]{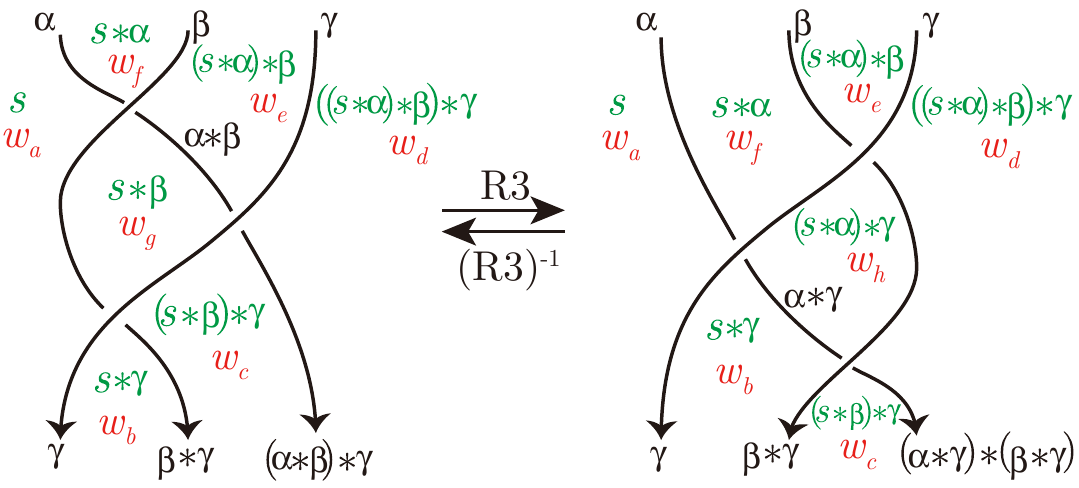}
 \caption{Third Reidemeister move}\label{LocalMove3}
\end{figure}

\begin{lemma}\label{LemR3}
The values $w_a^{(0)},\ldots,w_h^{(0)}$ defined in (\ref{eq43}) satisfy
$$w_d^{(0)}w_g^{(0)}-w_c^{(0)}w_e^{(0)}=w_a^{(0)}w_h^{(0)}-w_b^{(0)}w_f^{(0)}.$$
\end{lemma}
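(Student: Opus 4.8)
The plan is to express all eight quantities $w_a^{(0)},\dots,w_h^{(0)}$ as determinants against only \emph{two} fixed vectors and then collapse each side of the claimed identity with a single bilinear relation in $\mathbb{C}^2$. The obstruction to a naive computation is that $\gamma$ enters asymmetrically: in (\ref{eq43}) the region-colors of $w_b^{(0)},w_c^{(0)},w_d^{(0)},w_h^{(0)}$ all carry a trailing $*\gamma$, while those of $w_a^{(0)},w_e^{(0)},w_f^{(0)},w_g^{(0)}$ do not. So the first step is to absorb this $\gamma$ using the $*$-invariance $\det(a*c,b*c)=\det(a,b)$. Writing $\gamma$ as the matrix $C$ of (\ref{matrixcc}), which has $\det C=1$, one checks that $\det(p,x*\gamma)=\det(p*^{-1}\gamma,\,x)$. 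Setting $p':=p*^{-1}\gamma$ this rewrites the four ``$\gamma$-colored'' values as
\[
w_b^{(0)}=\det(p',s),\quad w_c^{(0)}=\det(p',s*\beta),\quad w_d^{(0)}=\det(p',(s*\alpha)*\beta),\quad w_h^{(0)}=\det(p',s*\alpha),
\]
whereas $w_a^{(0)}=\det(p,s)$, $w_e^{(0)}=\det(p,(s*\alpha)*\beta)$, $w_f^{(0)}=\det(p,s*\alpha)$, $w_g^{(0)}=\det(p,s*\beta)$. Thus every value is $\det(p,\cdot)$ or $\det(p',\cdot)$ evaluated on one of the four region-colors $s$, $s*\alpha$, $s*\beta$, $(s*\alpha)*\beta$.

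Next I would invoke the elementary $2\times2$ (Pl\"ucker) identity, valid for any $X,Y\in\mathbb{C}^2$,
\[
\det(p',X)\det(p,Y)-\det(p',Y)\det(p,X)=\det(p',p)\,\det(X,Y),
\]
which is a one-line expansion in coordinates. Applying it with $X=(s*\alpha)*\beta$, $Y=s*\beta$ gives
$w_d^{(0)}w_g^{(0)}-w_c^{(0)}w_e^{(0)}=\det(p',p)\,\det\big((s*\alpha)*\beta,\,s*\beta\big)$,
and applying it with $X=s*\alpha$, $Y=s$ gives
$w_a^{(0)}w_h^{(0)}-w_b^{(0)}w_f^{(0)}=\det(p',p)\,\det(s*\alpha,\,s)$.
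Since both sides of the lemma are now $\det(p',p)$ times a single determinant, it suffices to verify $\det\big((s*\alpha)*\beta,\,s*\beta\big)=\det(s*\alpha,\,s)$; note that this reduction needs no assumption on $\det(p',p)$, so no nondegeneracy hypothesis is required.

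Finally, this last equality is immediate from $*$-invariance once more: $(s*\alpha)*\beta$ and $s*\beta$ are obtained from $s*\alpha$ and $s$ by applying $*\beta$, so $\det\big((s*\alpha)*\beta,\,s*\beta\big)=\det(s*\alpha,\,s)$, and the two sides coincide. The genuinely substantive part of the argument is the bookkeeping of the first two steps; the crux — and the one place where a mislaid sign or a transposed argument would silently break the proof — is confirming that the asymmetric products appearing on each side of the lemma are precisely the antisymmetric combinations that the Pl\"ucker identity factors through the common scalar $\det(p',p)$, leaving exactly one $*$-invariance to finish. This is in the same spirit as Lemma \ref{LemR1}, where Cayley--Hamilton played the role that the Pl\"ucker identity plays here.
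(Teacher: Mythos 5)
Your proof is correct, and it takes a genuinely different route from the paper's. The paper proves this lemma by fixing coordinates $p=(p_1,p_2)$, $s=(s_1,s_2)$ and the matrices $A,B,C$ of $\alpha,\beta,\gamma$, and then verifying by brute-force expansion that both $\det(p,sABC)\det(p,sB)-\det(p,sBC)\det(p,sAB)$ and $\det(p,s)\det(p,sAC)-\det(p,sC)\det(p,sA)$ equal $-(c_2p_1-c_1p_2)^2(a_2s_1-a_1s_2)^2$; the authors even remark that they ``cannot find any other proof except the direct calculation,'' so your argument supplies exactly what that remark asks for. Each of your three steps is sound: since the matrix $C$ of (\ref{matrixcc}) is unimodular, $\det(p,xC)=\det(pC^{-1},x)$, which moves the trailing $*\gamma$ off the region-colors of $w_b^{(0)},w_c^{(0)},w_d^{(0)},w_h^{(0)}$ onto $p'=pC^{-1}$ (this is the correct reading of $p*^{-1}\gamma$; incidentally, the matrix displayed for $*^{-1}$ in Section 3.1 has $\gamma^2$ and $\delta^2$ in transposed positions, a typo that does not affect your argument since you only use the genuine inverse $pC^{-1}$); the two-dimensional Pl\"ucker relation $\det(p',X)\det(p,Y)-\det(p',Y)\det(p,X)=\det(p',p)\det(X,Y)$ is a standard one-line identity, and your instantiations $(X,Y)=\bigl((s*\alpha)*\beta,\,s*\beta\bigr)$ and $(X,Y)=(s*\alpha,\,s)$ reproduce exactly the left- and right-hand sides of the lemma; and the closing equality $\det\bigl((s*\alpha)*\beta,\,s*\beta\bigr)=\det(s*\alpha,\,s)$ is the $*$-invariance of $\det$ recorded in Section 3.1. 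Your factorization moreover explains the paper's mysterious middle term structurally: both sides equal $\det(p*^{-1}\gamma,\,p)\,\det(s*\alpha,\,s)$, and short computations give $\det(p*^{-1}\gamma,p)=\det(p,pC)=(c_2p_1-c_1p_2)^2$ and $\det(s*\alpha,s)=-(a_2s_1-a_1s_2)^2$, recovering the displayed value. What the paper's computation buys is self-contained verifiability; what yours buys is an explanation of why the two asymmetric bilinear combinations agree (they are the same Pl\"ucker scalar $\det(p',p)$ times $*$-equivalent determinants), with no nondegeneracy hypothesis on $\det(p',p)$ needed, and an argument that generalizes verbatim to any quandle elements acting by unimodular matrices.
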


\begin{proof}
Using the identification (\ref{matrixcc}), let 
$$\alpha\leftrightarrow A,~ \beta\leftrightarrow B,~\gamma\leftrightarrow C,$$
{and
$$A=\left(\begin{array}{cc}1+a_1 a_2 &a_2^2  \\-a_1^2 & 1-a_1a_2\end{array}\right),
B=\left(\begin{array}{cc}1+b_1 b_2 &b_2^2   \\-b_1^2& 1-b_1b_2\end{array}\right),
C=\left(\begin{array}{cc}1+c_1 c_2 &  c_2^2\\ -c_1^2& 1-c_1c_2\end{array}\right).$$ 
Also, put
$$p=\left(\begin{array}{cc}p_1 &p_2\end{array}\right)\text{ and } s=\left(\begin{array}{cc}s_1 &s_2\end{array}\right).$$
Then direct calculation shows the following identity:
\begin{align*}
&\det(p,sABC)\det(p,sB)-\det(p,sBC)\det(p,sAB)\\
&=-(c_2p_1-c_1p_2)^2(a_2s_1-a_1s_2)^2\\
&=\det(p,s)\det(p,sAC)-\det(p,sC)\det(p,sA).
\end{align*}
(
Although this identity looks very elementary, the authors cannot find any other proof except the direct calculation.)
Applying (\ref{eq43}) to this identity proves the lemma.}

\end{proof}

\section{Orientation change and the mirror image}\label{sec5}

The proofs of the relations of solutions in Lemma \ref{LemR1}-\ref{LemR3} needed orientation of the link diagram.
However, we can show that the same relations still hold for any choice of orientations and for the mirror images.
(Exact statements will appear below.) These results are very useful when we consider the actual examples
because they reduce the number of the Reidemeister moves.


\begin{lemma}[Uniqueness of the solution]\label{uniquelemma}
Let $\bold{w}=(w_1^{(0)},\ldots,w_n^{(0)})$ be the solution of the hyperbolicity equations obtained by the shadow-coloring induced by $\rho$. (See Proposition \ref{pro2} for the construction.)
After applying one of the Reidemeister moves R1, R1$'$, R3 and (R3)$^{-1}$ to the link diagram once, 
assume the new variable $w_{n+1}$ appeared.
Then the value $w_{n+1}^{(0)}$ satisfying $(w_1^{(0)},\ldots,w_{n+1}^{(0)})$ to be a solution of the hyperbolicity equations
is uniquely determined by the values $w_1^{(0)},\ldots,w_n^{(0)}$.
Likewise, if new variables $w_{n+1}$ and $w_{n+2}$ appeared after applying R2 or R2$\,'$ move once,
then the values $w_{n+1}^{(0)}$ and $w_{n+2}^{(0)}$ satisfying the hyperbolicity equations
are uniquely determined by the values $w_1^{(0)},\ldots,w_n^{(0)}$.
\end{lemma}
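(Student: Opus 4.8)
The plan is to reduce the uniqueness statement to the explicit relations already established in Lemmas \ref{LemR1}, \ref{LemR2} and \ref{LemR3}, and to observe that each of those relations expresses the newly appearing variable(s) as the solution of a nondegenerate (in fact linear or affine) equation in the old variables. First I would treat the moves R1, R1$'$, R3 and $(R3)^{-1}$ together, since each of these introduces exactly one new region variable $w_{n+1}$. For R1 and R1$'$, Lemma \ref{LemR1} gives $w_c^{(0)}=2w_b^{(0)}-w_a^{(0)}$, which already exhibits the new value as an explicit affine function of two of the old values; there is nothing to solve and uniqueness is immediate. For R3 and $(R3)^{-1}$, Lemma \ref{LemR3} gives the relation (\ref{R3rel}), namely $w_d^{(0)}w_g^{(0)}-w_c^{(0)}w_e^{(0)}=w_a^{(0)}w_h^{(0)}-w_b^{(0)}w_f^{(0)}$. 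Here the new variable (either $w_h^{(0)}$ or $w_g^{(0)}$) appears linearly with coefficient $w_a^{(0)}$ (respectively $w_d^{(0)}$), so I would point out that essentialness of the solution guarantees these coefficients are nonzero, whence the new value is uniquely determined.

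Next I would handle R2 and R2$'$, which introduce two new variables. The key observation, already recorded in the definition preceding Lemma \ref{LemR2}, is that one of the two new variables is forced to equal an old one: applying $\exp(w_e\frac{\partial T(W)}{\partial w_e})=1$ yields $w_d^{(0)}=w_b^{(0)}$, as shown in the proof of Lemma \ref{LemR2}. This collapses the two-variable problem to a single genuinely new unknown $w_e^{(0)}$. Then the equation $\exp(w_b\frac{\partial T(W)}{\partial w_b})=1$, which by the computation $\exp(w_b\frac{\partial V_{R2}}{\partial w_b})=\frac{w_aw_c-w_bw_e}{(w_c-w_b)(w_a-w_b)}$ in Lemma \ref{LemR2} is linear in $w_e$, determines $w_e^{(0)}$ uniquely. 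I would again invoke essentialness to ensure the denominator factors $(w_c-w_b)$ and $(w_a-w_b)$ are nonzero and the coefficient of $w_e$ does not vanish, so the linear equation is genuinely solvable for a unique value. The R2$'$ case follows by the orientation-independence recorded in Lemma \ref{lem40}, exactly as in the proof of Lemma \ref{LemR2}.

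Finally I would tie the pieces together by remarking that in every case the solution produced by the shadow-coloring construction of Proposition \ref{pro2} does satisfy these relations (this is precisely the content of Lemmas \ref{LemR1}--\ref{LemR3}), so at least one valid value exists; the uniqueness then follows from the fact that each relation pins down the new value(s) via an equation that is affine or linear with nonvanishing leading coefficient. The main obstacle I anticipate is not the algebra but the bookkeeping: I must make sure that the hyperbolicity equations indexed by the newly created edges are exactly the ones used above (so that no additional constraint is being silently imposed or omitted), and that the equations attached to the surviving old regions are not disturbed by the move. In other words, the delicate point is verifying that the full system $\mathcal{I}$ for the new diagram decouples into the unchanged old equations plus precisely the equations in Lemmas \ref{LemR1}--\ref{LemR3} governing the new variables, so that solving for $w_{n+1}^{(0)}$ (and $w_{n+2}^{(0)}$) does not retroactively constrain $w_1^{(0)},\ldots,w_n^{(0)}$. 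I would address this by appealing to the local nature of the potential function in (\ref{defW}): since $W$ is a sum over crossings and the Reidemeister move alters only the crossings inside the local tangle, the partial derivatives $w_k\frac{\partial W}{\partial w_k}$ for regions $k$ untouched by the move are unchanged, so the old part of the system is automatically preserved.
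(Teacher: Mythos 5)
Your treatment of R2 and R2$'$ coincides with the paper's proof (the paper uses exactly the equations $\exp(w_e\frac{\partial T(W)}{\partial w_e})=1$ and $\exp(w_b\frac{\partial T(W)}{\partial w_b})=1$), and your locality remark correctly explains why the equations at untouched regions survive. But for R1, R1$'$, R3 and $(R3)^{-1}$ there is a genuine gap. You deduce uniqueness from the relations of Lemma \ref{LemR1} and Lemma \ref{LemR3}, yet those relations are properties of the particular values $w_k^{(0)}=\det(p,s_k)$ produced by the shadow-coloring construction --- their proofs go through the Cayley--Hamilton theorem and a determinant identity for the colors --- and are not, a priori, constraints imposed by the hyperbolicity equations $\mathcal{I}$ of the new diagram. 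The lemma asserts that \emph{any} value of the new variable making the extended tuple a solution of the new system is unique; knowing that the constructed solution satisfies an affine relation with a unique root does not exclude some other solution of the equations that violates that relation. As written, your R1/R3 cases establish existence (with a formula) but not uniqueness, and your closing paragraph, which frames the remaining work as checking that $\mathcal{I}$ ``decouples into the old equations plus the relations of Lemmas \ref{LemR1}--\ref{LemR3},'' mischaracterizes those relations as members of the system; that equivalence is precisely what is missing.

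The repair is exactly the technique you already use for R2, and it is the paper's route for all the moves: exhibit one hyperbolicity equation of the new diagram containing the new variable and check it is linear in it. The paper takes $\exp(w_a\frac{\partial T(W)}{\partial w_a})=1$ for the first moves, and $\exp(w_d\frac{\partial T(W)}{\partial w_d})=1$, $\exp(w_a\frac{\partial T(W)}{\partial w_a})=1$ for the third move and its inverse. For instance, for R1 a direct computation (or Lemma \ref{lem53}) shows the kink crossing contributes the factor
\begin{equation*}
\exp\left(w_a\frac{\partial V_{R1}}{\partial w_a}\right)=\frac{(w_b-w_a)^2}{w_b^2-w_a w_c}
\end{equation*}
to the equation at the region of $w_a$; since the old factors already multiply to $1$, any solution must satisfy $(w_b^{(0)}-w_a^{(0)})^2=(w_b^{(0)})^2-w_a^{(0)}w_c^{(0)}$, which is linear in $w_c$ and, using $w_a^{(0)}\neq 0$ from essentialness, forces $w_c^{(0)}=2w_b^{(0)}-w_a^{(0)}$. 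Only after such a computation does the affine relation of Lemma \ref{LemR1} become a consequence of the equations; the analogous verification at the regions of $w_a$ and $w_d$ handles R3, with essentialness again guaranteeing the nonvanishing of the relevant coefficients.
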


\begin{proof} Note that the main idea was already appeared in the proof of Lemma \ref{LemR2}.

Let $T(W)$ be the potential function of the link diagram obtained after applying the Reidemeister move once.
Now consider Figure \ref{pic02}. In Figure \ref{pic02}(a), the value $w_{c}^{(0)}$ is uniquely determined by the equation
$\exp(w_a\frac{\partial T(W)}{\partial w_a})=1$. In Figure \ref{pic02}(b), the equation $\exp(w_e\frac{\partial T(W)}{\partial w_e})=1$
uniquely determines the value $w_{d}^{(0)}=w_{b}^{(0)}$ and $w_{e}^{(0)}$ is uniquely determined by the equation
$\exp(w_b\frac{\partial T(W)}{\partial w_b})=1$. In Figure \ref{pic02}(c), the values $w_{g}^{(0)}$ and $w_{h}^{(0)}$ are 
uniquely determined by the equations $\exp(w_d\frac{\partial T(W)}{\partial w_d})=1$ and $\exp(w_a\frac{\partial T(W)}{\partial w_a})=1$, respectively.

\end{proof}

{
Now we introduce {\it the local orientation change} and show how the arc-color changes. From (\ref{matrixcc}), we obtain
\begin{equation*}
\pm i \left(\begin{array}{cc}\alpha &\beta\end{array}\right)  
  \longleftrightarrow\left(\begin{array}{cc}1-\alpha\beta & -\beta^2 \\ \alpha^2& 1+\alpha\beta\end{array}\right)
  =\left(\begin{array}{cc}1+\alpha\beta & \beta^2 \\ -\alpha^2& 1-\alpha\beta\end{array}\right)^{-1}
\end{equation*}
and $s*^{-1}(\pm i a_k)=s*a_k.$ Therefore we define the local orientation change as in Figure \ref{orichang}.
Note that the arc-color of the reversed orientated arc changes to $\pm i a_k$, but the region-colors are still well-defined.
Therefore, the invariance of the region-colors shows the invariance of the solution under the local orientation change.

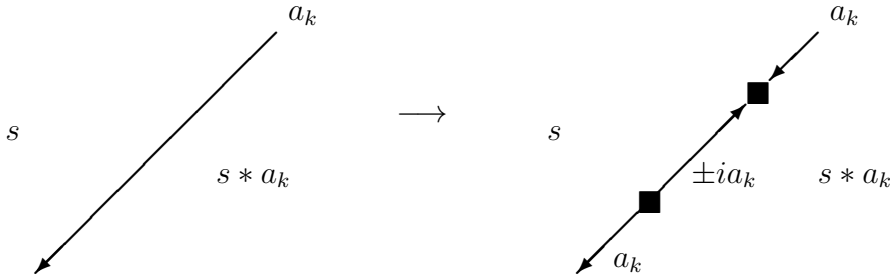
\begin{figure}[h]
\centering  \setlength{\unitlength}{0.8cm}\thicklines
\begin{picture}(20,5)  
    \put(6,4){\vector(-1,-1){4}}
    \put(1.5,2.2){$s$}
    \put(5,1.5){$s*a_k$}
    \put(6.2,4.2){$a_k$}
    \put(8,2.5){$\longrightarrow$}
    \put(15,4){\vector(-1,-1){0.8}}
    \put(12,1){\vector(1,1){1.8}}
    \put(12,1){\vector(-1,-1){1}}
    \put(12,1){$\blacksquare$}
    \put(13.8,2.8){$\blacksquare$}
    \put(10.5,2.2){$s$}
    \put(15,1.5){$s*a_k$}
    \put(15.2,4.2){$a_k$}
    \put(11.6,0.1){$a_k$}
    \put(12.9,1.5){$\pm i a_k$}
  \end{picture}
  \caption{Local orientation change}\label{orichang}
\end{figure}
}

\begin{figure}
\centering
\subfigure[First moves]{\includegraphics[scale=1.2]{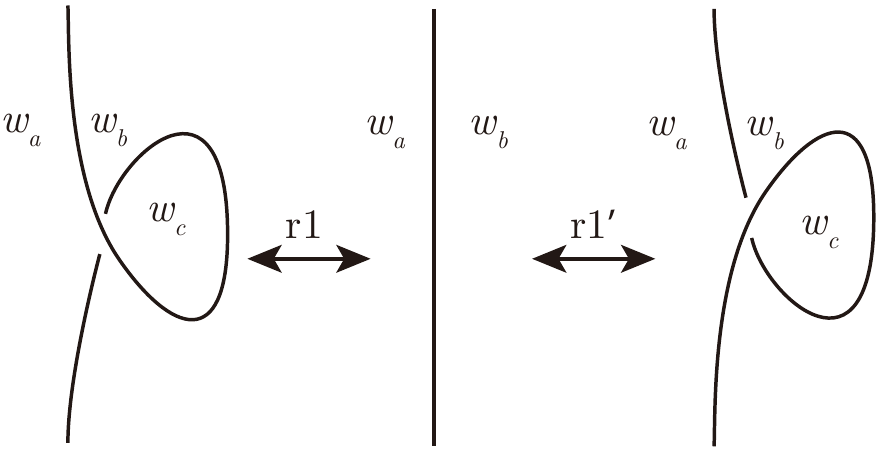}}\hspace{0.5cm}
\subfigure[Second moves]{\includegraphics[scale=1.2]{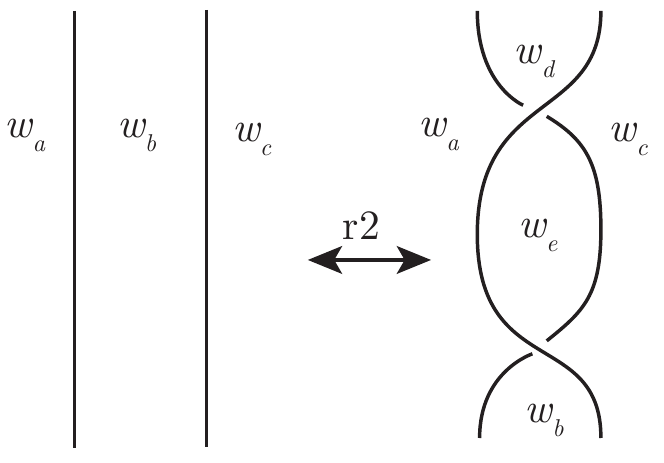}}\hspace{0.5cm}
\subfigure[Third move]{\includegraphics[scale=1.2]{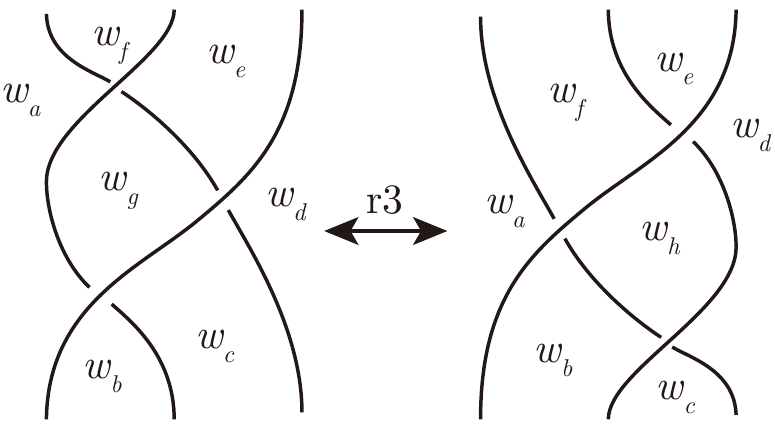}}
 \caption{Un-oriented Reidemeister moves}\label{pic15}
\end{figure}

\begin{proposition}\label{unR}
Consider the un-oriented Reidemeister moves in Figure \ref{pic15} and a link diagram $D$ containing one of Figure \ref{pic15}. 
Let $(\ldots,w_a^{(0)},w_b^{(0)},\ldots)$ be the solution of the hyperbolicity equations 
obtained by the shadow-coloring induced by $\rho$.
Then, for Figure \ref{pic15}(a), the values of the variables satisfy
\begin{equation}\label{eq20}
w_c^{(0)}=2w_b^{(0)}-w_a^{(0)},
\end{equation}
and, for Figure \ref{pic15}(c), the values satisfy
\begin{equation}\label{eq21}
w_d^{(0)}w_g^{(0)}-w_c^{(0)}w_e^{(0)}=w_a^{(0)}w_h^{(0)}-w_b^{(0)}w_f^{(0)}.
\end{equation}
\end{proposition}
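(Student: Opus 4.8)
The plan is to reduce each un-oriented move to the corresponding oriented move already settled in Lemma \ref{LemR1} and Lemma \ref{LemR3}, exploiting the fact that the quantities appearing in (\ref{eq20}) and (\ref{eq21}) are manufactured solely from the region-colors. By Proposition \ref{pro2} each value equals $w_k^{(0)}=\det(p,s_k)$ for the region-color $s_k$, so both relations are assertions about region-colors alone and make no direct reference to the orientations of the arcs. It therefore suffices to check that, whatever orientation compatible with the shadow-coloring induced by $\rho$ is placed on the local tangle, the surrounding region-colors coincide with those of one of the oriented diagrams in Figure \ref{pic02}.

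First I would invoke the local orientation change described just above. Reversing the orientation of one arc replaces its arc-color $a_k$ by $\pm i\,a_k$, whose associated matrix is $A^{-1}$, while at the same time reversing the direction in which the region-coloring rule of Figure \ref{fig03} is read, so that $*\,a_k$ becomes $*^{-1}(\pm i\,a_k)$. Since $s*^{-1}(\pm i\,a_k)=s*a_k$, the two effects cancel and every region-color---hence every $w_k^{(0)}=\det(p,s_k)$---is left unchanged. Consequently any orientation of the tangle in Figure \ref{pic15}(a) or (c) can be joined, through a sequence of local orientation changes on individual arcs, to the standard oriented move of Figure \ref{pic02}(a) or (c) without disturbing any of the values $w_a^{(0)},w_b^{(0)},\ldots$.

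With this reduction in hand, (\ref{eq20}) for Figure \ref{pic15}(a) is exactly Lemma \ref{LemR1} and (\ref{eq21}) for Figure \ref{pic15}(c) is exactly Lemma \ref{LemR3}; both lemmas were established from $w_k^{(0)}=\det(p,s_k)$ together with the purely algebraic identities (Cayley--Hamilton for the first move, the determinant identity for the third), neither of which sees the orientation. The hard part will be the bookkeeping in the middle step: one must verify arc-by-arc that each admissible orientation of the un-oriented configuration is genuinely reachable from a standard oriented one by local orientation changes, and that the region-coloring stays well-defined at every intermediate stage. Once this finite combinatorial check is carried out, the proposition follows at once.
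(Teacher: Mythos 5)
Your proposal is correct and follows essentially the same route as the paper: the paper's proof likewise applies the local orientation change (using $s*^{-1}(\pm i\,a_k)=s*a_k$ and the resulting invariance of the region-colors, hence of the solution) to reduce each un-oriented configuration of Figure \ref{pic15} to the oriented one of Figure \ref{pic02}, and then quotes Lemma \ref{LemR1} and Lemma \ref{LemR3}. The ``bookkeeping'' you flag as the hard part is treated as immediate in the paper, since any orientation of the local tangle is reachable by reversing arcs one at a time, each reversal leaving all $w_k^{(0)}=\det(p,s_k)$ untouched.
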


\begin{proof}
{
By applying the local orientation change whenever it is necessary, we can assign the local orientation of Figure \ref{pic02}
to the un-oriented diagrams in Figure \ref{pic15}. 
Then, from the oriented Reidemeister transformations, we obtain the relations (\ref{eq20}) and (\ref{eq21}).
The solution of the hyperbolicity equations is invariant under the local orientation change, 
so the relations are independent with the choice of the orientations.

}

\end{proof}

As a result of Proposition \ref{unR}, we define the {\it un-oriented Reidemeister 1st and 3rd transformations of solutions}
by the same formulas of the oriented version in Definition \ref{def1}. On the other hand, 
the formula of the oriented Reidemeister 2nd move in Definition \ref{def1} needed an explicit potential function, which depends on the orientation.
However, we can formulate the Reidemeister 2nd move without orientations using the following lemma. At first, we define
the weight of the corner as in Figure \ref{pic16}.

\begin{figure}[h]\centering
\setlength{\unitlength}{0.6cm}
\subfigure[]{
  \begin{picture}(20,4)\thicklines
    \put(6,4){\line(-1,-1){4}}
    \put(2,4){\line(1,-1){1.8}}
    \put(4.2,1.8){\line(1,-1){1.8}}
    \put(4,2){\arc(-0.6,-0.6){90}}
    \put(3.5,0){$w_a$}
    \put(5.5,2){$w_b$}
    \put(3.5,3.5){$w_c$}
    \put(1.5,2){$w_d$}
    \put(7,2){$\longrightarrow$}
    \put(9,2){$\displaystyle x_a^j=\frac{(w_b-w_a)(w_d-w_a)}{w_b w_d-w_c w_a}$}
    \put(4,0.6){$x_a^j$}
    \put(4.5,2){$j$}
  \end{picture}}
  \subfigure[]{
  \begin{picture}(20,4)\thicklines
    \put(2,4){\line(1,-1){4}}
    \put(2,0){\line(1,1){1.8}}
    \put(6,4){\line(-1,-1){1.8}}
    \put(4,2){\arc(-0.6,-0.6){90}}
    \put(3.5,0){$w_a$}
    \put(5.5,2){$w_b$}
    \put(3.5,3.5){$w_c$}
    \put(1.5,2){$w_d$}
    \put(7,2){$\longrightarrow$}
    \put(9,2){$\displaystyle x_a^j=\frac{w_b w_d-w_c w_a}{(w_b-w_a)(w_d-w_a)}$}
    \put(4,0.6){$x_a^j$}
    \put(4.5,2){$j$}
      \end{picture}}  \caption{Weight $x_a^j$ assigned to the corner of the crossing $j$}\label{pic16}
\end{figure}

\begin{lemma}\label{lem53}
For an oriented link diagram $D$, let $W$ be the potential function of $D$ and choose a region R assigned with variable $w_k$.
Then,
$$\exp(w_k\frac{\partial W}{\partial w_k})=\prod_{j}x_k^j,$$
where $j$ is over all the crossings adjacent to the region R.
\end{lemma}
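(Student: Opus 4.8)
The plan is to exploit the additive structure of the potential function together with the fact that exponentiation turns a sum of logarithms into a product of rational functions, thereby reducing the global statement to a purely local computation at each corner. First I would write $W=\sum_j W^j$ as in (\ref{defW}) and observe that the variable $w_k$ occurs in the summand $W^j$ precisely when the region R is one of the four regions surrounding the crossing $j$, that is, exactly when $j$ is adjacent to R. Hence
$$w_k\frac{\partial W}{\partial w_k}=\sum_{j\text{ adjacent to R}}w_k\frac{\partial W^j}{\partial w_k},$$
and applying $\exp$ converts this into
$$\exp\left(w_k\frac{\partial W}{\partial w_k}\right)=\prod_{j\text{ adjacent to R}}\exp\left(w_k\frac{\partial W^j}{\partial w_k}\right).$$
This already has the shape of the claimed product, so it remains only to identify each local factor $\exp(w_k\partial W^j/\partial w_k)$ with the corner weight $x_k^j$ of Figure \ref{pic16}.

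For the local identity I would differentiate the explicit formula for $W^j$ in Figure \ref{pic01}, using $\li'(z)=-\log(1-z)/z$ so that each term of the form $w_k\frac{\partial}{\partial w_k}\li(\cdots)$ becomes $\pm\log\bigl(1-(\cdots)\bigr)$, while each term coming from the $\log\cdot\log$ summand contributes a single $\log$. Collecting these logarithms and exponentiating, the $\log(1-z)$ factors recombine into the edge differences $(w_b-w_a)$, $(w_d-w_a)$ and the diagonal factor $w_bw_d-w_cw_a$, producing exactly the rational expression of Figure \ref{pic16}. The prototype of this computation is already recorded in Lemma \ref{lem40}, where $\exp(w_a\partial W^j/\partial w_a)=\frac{(w_b-w_a)(w_d-w_a)}{w_bw_d-w_cw_a}=x_a^j$ for the bottom corner of a positive crossing; the three remaining corners are handled by the identical calculation. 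A key structural point is that, although $\li$ and $\log$ are multivalued, only their exponentials enter, so all branch ambiguities (integer multiples of $2\pi i$) disappear and the resulting rational functions are unambiguous.

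The only genuinely case-dependent feature is that the four corners of a crossing split into two diagonal pairs: for one pair the local factor comes out as in Figure \ref{pic16}(a), and for the other pair it comes out as its reciprocal, Figure \ref{pic16}(b). I would verify this by computing $\exp(w_b\partial W^j/\partial w_b)$ and checking that it equals $\frac{w_aw_c-w_dw_b}{(w_a-w_b)(w_c-w_b)}$, the reciprocal of the expression obtained by placing $w_b$ in the corner slot; the two other corners follow from the evident symmetry of $W^j$. To cover negative crossings and the various orientations without repeating the dilogarithm bookkeeping, I would invoke Lemma \ref{lem40}, which states that $\exp(w_k\partial W^j/\partial w_k)$ is insensitive to the orientation of the strands, so the negative-crossing weights of Figure \ref{pic16} are obtained from the positive-crossing ones at no extra cost.

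The main obstacle I anticipate is organizational rather than conceptual: keeping track of which corner of which crossing produces the formula of Figure \ref{pic16}(a) versus (b), and confirming that a region meeting a crossing at two opposite corners contributes both corner weights as separate factors, so that the product over ``crossings adjacent to R'' is correctly read as a product over the corners of R lying on those crossings. Once this corner-by-corner dictionary is fixed, the remaining verification is the routine differentiation sketched above.
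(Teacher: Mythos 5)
Your proposal is correct and takes essentially the same approach as the paper: decompose $W=\sum_j W^j$, exponentiate so the sum becomes a product over the crossings adjacent to R, verify the four corner identities $\exp(w_k\frac{\partial W^j}{\partial w_k})=x_k^j$ by direct differentiation at a positive crossing, and invoke Lemma \ref{lem40} to dispose of the negative-crossing case. Your added remarks on the disappearance of branch ambiguities under exponentiation and on regions meeting a crossing at two opposite corners are consistent with, and slightly more explicit than, the paper's own computation.
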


\begin{proof}
The proof can be easily obtained by direct calculations. In the case of Figure \ref{pic01}(a),
\begin{align*}
&\exp(w_a\frac{\partial W^j}{\partial w_a})=\frac{(w_b-w_a)(w_c-w_a)}{w_bw_d-w_cw_a}=x_a^j,
&\exp(w_b\frac{\partial W^j}{\partial w_b})=\frac{w_aw_c-w_dw_b}{(w_a-w_b)(w_c-w_b)}=x_b^j,\\
&\exp(w_c\frac{\partial W^j}{\partial w_c})=\frac{(w_b-w_c)(w_d-w_c)}{w_bw_d-w_aw_c}=x_c^j,
&\exp(w_d\frac{\partial W^j}{\partial w_d})=\frac{w_aw_c-w_bw_d}{(w_a-w_d)(w_c-w_d)}=x_d^j,
\end{align*}
and the case of Figure \ref{pic01}(b) can be obtained from Lemma \ref{lem40}.
The main equation is obtained by
$$\exp(w_k\frac{\partial W}{\partial w_k})=\exp(w_k\frac{\partial (\sum_j W^j)}{\partial w_k})
=\prod_{j}\exp(w_k\frac{\partial W^j}{\partial w_k})=\prod_{j}x_k^j,$$
where $j$ is over all the crossings adjacent to the region R. (Note that if $j$ is not adjacent to the region R, 
then $w_k\frac{\partial W^j}{\partial w_k}=0$.)

\end{proof}

The weight does not depends on the orientation, so we can describe the equation $\exp(w_b\frac{\partial W}{\partial w_b})=1$ 
in the right-hand side of Figure \ref{pic15}(b) without orientation.
This equation determines the value $w_e^{(0)}$ uniquely and it defines {\it the un-oriented Reidemeister 2nd move of the solution}.

Now consider a link diagram $D$ and its mirror image\footnote{The mirror image is defined as follows:
assume the link is in the $(x,y,z)$-space and the diagram $D$ is obtained by the projection along $z$-axis to the $(x,y)$-plane. Then $\overline{D}$
is the mirror image of $D$ by the reflection on the $(y,z)$-plane. For an example, see
Figure \ref{pic15}(c) and Figure \ref{pic17}.} $\overline{D}$. 
When the variables $w_a,w_b,\ldots$
are assigned to $D$, we always assume the same variables are assigned to the same mirrored regions. 
Let $W(w_a,w_b,\ldots)$ be the potential function of $D$. Then, from the definition of the potential functions
in Figure \ref{pic01}, the potential function of $\overline{D}$ becomes $-W(w_a,w_b,\ldots)$.
(Note that we are using here the invariance of the potential functions in Figure \ref{pic01} under $w_b\leftrightarrow w_d$.)
This suggests that the hyperbolicity equations in $\mathcal{I}$ are invariant under the mirroring. 

\begin{lemma}\label{R3mirror}
Consider the diagrams in Figure \ref{pic17} and let
$(\ldots,w_a^{(0)},\ldots,w_f^{(0)},w_g^{(0)},\ldots)$ and 
$(\ldots,w_a^{(0)},\ldots,w_f^{(0)},w_h^{(0)},\ldots)$
be the solutions of the hyperbolicity equations obtained by the shadow-coloring induced by $\rho$.
The the values satisfy the equation (\ref{eq21}).
\end{lemma}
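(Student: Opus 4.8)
The plan is to reduce the claim to the determinant identity already established in the proof of Lemma \ref{LemR3}. The starting point is the remark preceding the statement: the potential function of $\overline{D}$ is $-W$, so the set of hyperbolicity equations $\mathcal{I}$ is unchanged by mirroring, and the solution attached to each side of Figure \ref{pic17} is again produced from the shadow-coloring induced by $\rho$ via formula (\ref{main}), that is $w_k^{(0)}=\det(p,s_k)$. Hence it suffices to read off the region-colors $s_k$ of the two diagrams in Figure \ref{pic17} and verify that the resulting values satisfy (\ref{eq21}). As in the proof of Proposition \ref{unR}, I would first apply local orientation changes to equip Figure \ref{pic17} with a genuine orientation; since the region-colors are invariant under these changes, this does not alter the values $w_k^{(0)}$.

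The key step is to trace the region-coloring rule of Figure \ref{fig03} through the mirrored crossings. Because mirroring interchanges the over- and under-strands at every crossing, each application of the quandle operation $*$ that occurred in the oriented third move of Lemma \ref{LemR3} is replaced by its inverse $*^{-1}$. Under the matrix identification (\ref{matrixcc}) this amounts to replacing the matrices $A,B,C$ associated with the arc-colors $\alpha,\beta,\gamma$ by $A^{-1},B^{-1},C^{-1}$; recall from Section \ref{sec5} that these inverses are again parabolic, corresponding to the arc-colors $\pm i\,\alpha$, $\pm i\,\beta$, $\pm i\,\gamma$. Writing the eight region-colors of Figure \ref{pic17} in this form and applying (\ref{main}) should express $w_a^{(0)},\ldots,w_h^{(0)}$ as exactly the entries appearing in the identity of Lemma \ref{LemR3}, but with $A,B,C$ replaced by $A^{-1},B^{-1},C^{-1}$.

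Finally, I would invoke the universality of that identity. The equality proved in Lemma \ref{LemR3} relating $\det(p,sABC)\det(p,sB)-\det(p,sBC)\det(p,sAB)$ to $\det(p,s)\det(p,sAC)-\det(p,sC)\det(p,sA)$ is a polynomial identity in the entries of $A,B,C,p,s$, valid for all parabolic matrices; in particular it survives the substitution $A\mapsto A^{-1}$, $B\mapsto B^{-1}$, $C\mapsto C^{-1}$, so no fresh computation is needed. Substituting the mirrored region-colors then gives (\ref{eq21}) at once. I expect the main obstacle to be the bookkeeping of the middle step: confirming that the mirrored region-coloring really produces the $*^{-1}$-analog of the arrangement in (\ref{eq43}), with the eight labels landing on the same slots of the determinant identity. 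Should mirroring instead permute the labels, I would check that the identity is symmetric under that permutation; otherwise the direct-calculation route of Lemma \ref{LemR3}, now performed with the inverse matrices, remains available as a fallback.
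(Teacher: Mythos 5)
Your proposal is correct in substance but takes a genuinely different route from the paper. The paper's proof is computation-free: mirroring negates the potential function, so the set $\mathcal{I}$ of hyperbolicity equations is unchanged under mirroring; hence the pair of solutions related by (\ref{eq21}) on Figure \ref{pic15}(c) (already established in Proposition \ref{unR}) is also a pair of solutions on the diagrams of Figure \ref{pic17}, and the uniqueness statement of Lemma \ref{uniquelemma} then forces the shadow-coloring-induced value $w_h^{(0)}$ (resp.\ $w_g^{(0)}$) to satisfy that same relation --- no coloring needs to be traced at all. You instead re-derive the relation directly: trace the mirrored shadow-coloring, observe that each quandle operation is replaced by its inverse, and re-use the determinant identity from the proof of Lemma \ref{LemR3} with $A,B,C$ replaced by $A^{-1},B^{-1},C^{-1}$; since these inverses are again parabolic (Section \ref{sec5}: $A^{-1}$ corresponds to $\pm i\,\alpha$ under (\ref{matrixcc})), the identity, which holds for arbitrary parabolic triples, applies verbatim. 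This is workable and more explicit, but it carries exactly the bookkeeping burden the paper's argument sidesteps --- the step you yourself flag as the main obstacle. One correction to your middle step: with the paper's definition of the mirror (reflection in the $(y,z)$-plane, see the footnote in Section \ref{sec5}), the over- and under-strands are \emph{not} interchanged; the reflection preserves the over/under data and flips the diagram left--right, reversing the handedness at each crossing. The net effect on the coloring rules is nevertheless the one you claim: crossing an arc in the mirrored picture applies $*^{-1}$ where the original applied $*$, and $a_m=a_l*a_k$ becomes the $*^{-1}$-relation, so your substitution and the slot-matching against (\ref{eq43}) go through and yield (\ref{eq21}).
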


\begin{proof} For Figure \ref{pic15}(c), the relation (\ref{eq21}) holds. 
The hyperbolicity equations are invariant under the mirroring,
{so a solution on the diagram $D$ is also a solution on $\overline{D}$. 
Therefore, a pair of solutions related by the relation (\ref{eq21}) on Figure \ref{pic15}(c) is 
also a pair of solution on Figure \ref{pic17} related by the same relation.
From the uniqueness of the solution in Lemma \ref{uniquelemma}, this is the only relation 
induced by a shadow-coloring.}

\end{proof}

\begin{figure}
\centering
\includegraphics[scale=1.2]{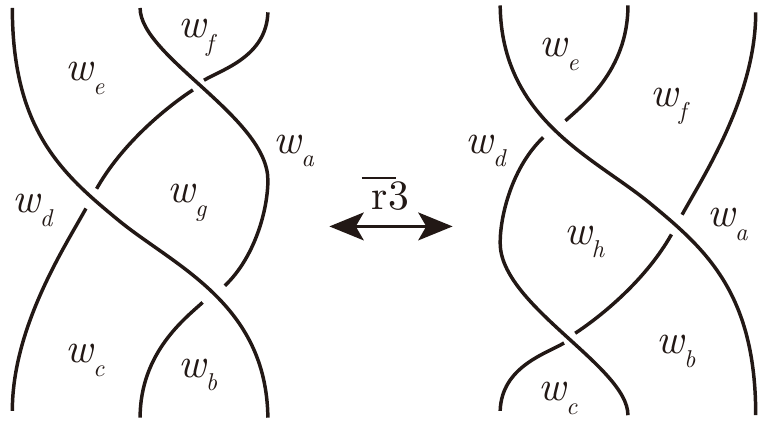}
 \caption{Mirror image of Figure \ref{pic15}(c)}\label{pic17}
\end{figure}

\begin{proposition} 
Consider a diagram $D'$ is obtained from $D$ by applying one un-oriented Reidemeister move,
assume variables $w_1,\ldots,w_n$ are assigned to regions of $D$
and $w_{n+1}$ is assigned to a newly appeared region of $D'$. Then Lemma \ref{uniquelemma} showed that
the value $w_{n+1}^{(0)}$ is uniquely determined from $w_1^{(0)},\ldots, w_n^{(0)}$ by a certain equation.
If we consider the mirror image $\overline{D}'$ obtained from $\overline{D}$, then the value $w_{n+1}^{(0)}$
of the mirror image is uniquely determined by the same equation.
\end{proposition}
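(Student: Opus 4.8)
The plan is to reduce the whole statement to the single algebraic fact already recorded just before Lemma \ref{R3mirror}: if $W(w_a,w_b,\ldots)$ is the potential function of $D'$, then the potential function of the mirror image $\overline{D}'$, with the same variables attached to the mirrored regions, is $-W(w_a,w_b,\ldots)$. Combined with the explicit form of the equations appearing in the proof of Lemma \ref{uniquelemma}, this forces the defining equation for the new variable to be unchanged under mirroring.

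First I would recall from the proof of Lemma \ref{uniquelemma} the precise shape of the equation that pins down each newly appeared variable. In every case (R1, R1$'$, R2, R2$'$, R3 and (R3)$^{-1}$) the value $w_{n+1}^{(0)}$ is determined by an equation of the form $\exp\!\left(w_k\frac{\partial T(W)}{\partial w_k}\right)=1$ for a suitable index $k$ of a region adjacent to the newly created one, where $T(W)$ denotes the potential function of $D'$. (For R2 and R2$'$ there are two such equations, one for each of the two new regions; the argument below applies verbatim to each.) Since the potential function of $\overline{D}'$ is $-T(W)$, the equation determining $w_{n+1}^{(0)}$ on $\overline{D}'$ is
\[
\exp\!\left(w_k\,\frac{\partial(-T(W))}{\partial w_k}\right)=1,
\]
and because $\exp\!\left(w_k \frac{\partial(-T(W))}{\partial w_k}\right)=\left(\exp\!\left(w_k \frac{\partial T(W)}{\partial w_k}\right)\right)^{-1}$, this reads $X^{-1}=1$, which has exactly the same solution set in $w_{n+1}$ as $X=1$, the equation on $D'$. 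By Lemma \ref{uniquelemma} each equation admits a unique solution, so the value produced on $\overline{D}'$ coincides with the one produced on $D'$.

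More transparently, I would phrase the same computation through Lemma \ref{lem53}: the defining equation is $\prod_j x_k^j=1$, taken over the crossings $j$ adjacent to the region carrying $w_k$, and mirroring interchanges positive and negative crossings, which by Figure \ref{pic16} replaces each corner weight $x_k^j$ by its reciprocal. Hence $\prod_j x_k^j$ is replaced by $\left(\prod_j x_k^j\right)^{-1}$, and the condition $=1$ is unaffected. The main obstacle will not be any computation but the bookkeeping: I would have to state explicitly that the reflection preserves the region-adjacency combinatorics and the convention assigning variables to mirrored regions, so that the index $k$ and the set of crossings adjacent to that region on $\overline{D}'$ match those on $D'$. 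Granting this correspondence, uniqueness from Lemma \ref{uniquelemma} closes the argument and shows $w_{n+1}^{(0)}$ is determined by the same equation on both diagrams.
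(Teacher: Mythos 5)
Your proof is correct, but it follows a genuinely different (and more uniform) route than the paper's. The paper's proof is a short case split: for the un-oriented first and second moves it observes that the mirror images are just $\pi$-rotations of the original local pictures, so there is literally nothing to check; only the third move needs an argument, and there it cites Lemma \ref{R3mirror}, whose proof combines the mirror-invariance of the hyperbolicity equations with the uniqueness statement of Lemma \ref{uniquelemma}. You instead run the invariance-plus-uniqueness mechanism uniformly over all moves: since the potential function of $\overline{D}'$ is $-T(W)$ (with the same variables on mirrored regions, which is legitimate thanks to the $w_b\leftrightarrow w_d$ symmetry of the crossing potentials in Figure \ref{pic01}), each determining equation
\begin{equation*}
\exp\left(w_k\frac{\partial T(W)}{\partial w_k}\right)=1
\end{equation*}
is replaced by its reciprocal and so has the same solution set, and uniqueness transfers; this is essentially the paper's R3 argument promoted to a blanket argument that makes the rotation trick unnecessary. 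Your second formulation via Lemma \ref{lem53} is a nice touch: since mirroring swaps the two corner-weight patterns of Figure \ref{pic16}, each $x_k^j$ is inverted, and the weights are orientation-free, which addresses head-on the fact that the proposition is an un-oriented statement while Lemma \ref{uniquelemma} is proved for the oriented moves of Figure \ref{pic02} (the paper routes this through Lemma \ref{lem40} and Proposition \ref{unR}). The trade-off: the paper's rotation observation for R1 and R2 costs zero computation but is case-specific and silently leaves R3 as the hard case, whereas your argument is a single mechanism covering all six moves, at the price of the bookkeeping you correctly flag --- that reflection preserves region adjacency and the variable-assignment convention, which the paper fixes by decree just before Lemma \ref{R3mirror}. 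Your parenthetical handling of the R2 case (two new regions, two equations, noting that one determining equation has its index $k$ at a new region) is accurate and matches the structure of the paper's Lemma \ref{uniquelemma}.
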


\begin{proof} The mirror images of the un-oriented Reidemeister 1st and 2nd moves are just $\pi$-rotations of the original moves,
so the same equation holds for each cases. The mirror image of the un-oriented Reidemeister 3rd move 
was already proved in Lemma \ref{R3mirror}.

\end{proof}

\begin{exa}\label{exa1}
Consider Figure \ref{pic17} and let
$(\ldots,w_a^{(0)},\ldots,w_f^{(0)},w_g^{(0)},\ldots)$ and 
$(\ldots,w_a^{(0)},\ldots,w_f^{(0)},w_h^{(0)},\ldots)$
be the solutions of the hyperbolicity equations of each diagrams obtained by the shadow-colorings induced by $\rho$.
The the variables satisfy
\begin{equation*}
w_d^{(0)}w_g^{(0)}-w_c^{(0)}w_e^{(0)}=w_a^{(0)}w_h^{(0)}-w_b^{(0)}w_f^{(0)},
\end{equation*}
which is the same equation with (\ref{eq21}).
\end{exa}

\section{Examples}\label{sec6}

\begin{exa}\label{exa2}
Consider the twisting move in Figure \ref{pic18} and let
$(\ldots,w_a^{(0)},w_b^{(0)},w_c^{(0)},w_d^{(0)},w_e^{(0)},\ldots)$ and 
$(\ldots,w_a^{(0)},w_c^{(0)},w_d^{(0)},w_e^{(0)},w_f^{(0)},w_g^{(0)},\ldots)$
be the solutions of the hyperbolicity equations of each diagrams obtained by the shadow-colorings induced by $\rho$.

    \begin{figure}[h]\centering
    \includegraphics[scale=0.8]{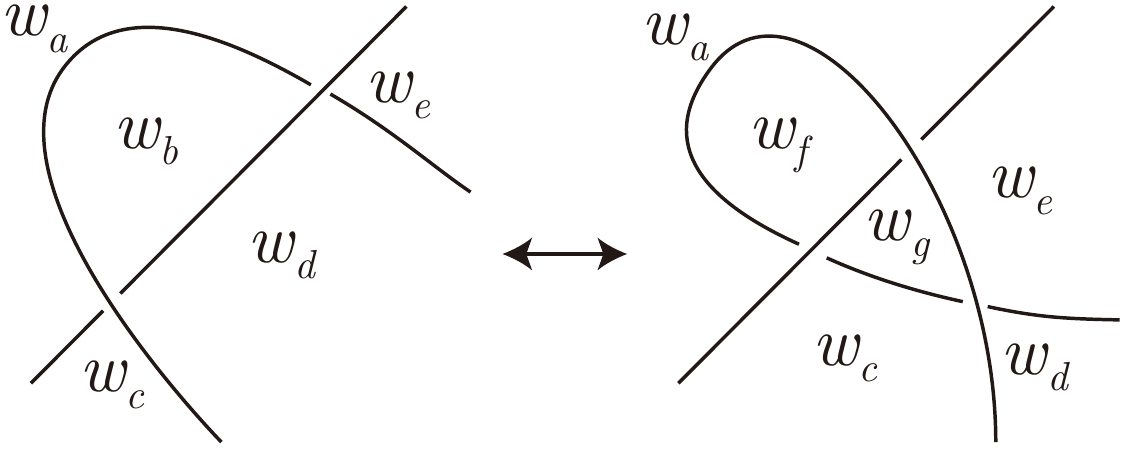}
    \caption{Twisting move}\label{pic18}
   \end{figure}

If we apply the twisting move from the left to the right, then $w_f^{(0)}$ and $w_g^{(0)}$ are uniquely determined by
$$w_f^{(0)}=2w_a^{(0)}-w_b^{(0)}~\text{ and }~w_d^{(0)} w_g^{(0)}-w_c^{(0)} w_e^{(0)}=w_f^{(0)} w_b^{(0)}-(w_a^{(0)})^2,$$
and if we apply it from the right to the left, then $w_b^{(0)}$ is uniquely determined by
$$w_b^{(0)}=2w_a^{(0)}-w_f^{(0)}.$$
\end{exa}

\begin{proof} By adding a kink from the left-hand side of Figure \ref{pic18}, we obtain Figure \ref{pic19} and the equation
$$w_f^{(0)}=2w_a^{(0)}-w_b^{(0)}.$$

    \begin{figure}[h]\centering
    \includegraphics[scale=0.8]{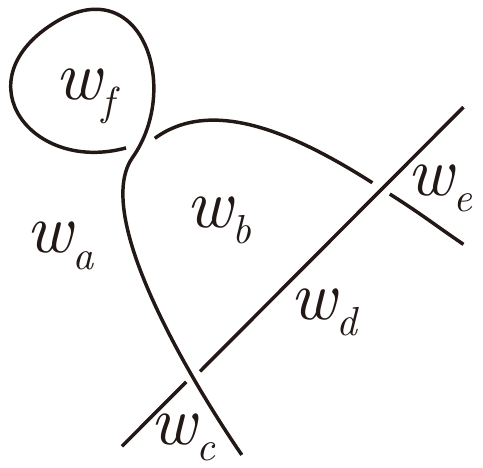}
    \caption{Adding a kink}\label{pic19}
   \end{figure}
   
Applying Example \ref{exa1} to Figure \ref{pic19}, we obtain another equation
$$w_d^{(0)} w_g^{(0)}-w_c^{(0)} w_e^{(0)}=w_f^{(0)} w_b^{(0)}-(w_a^{(0)})^2.$$
\end{proof}

Now we will show the changes of the solution from one figure-eight knot diagram to its mirror image.
At first, consider Figure \ref{example1}.

\begin{figure}[h]\centering
\includegraphics[scale=0.5]{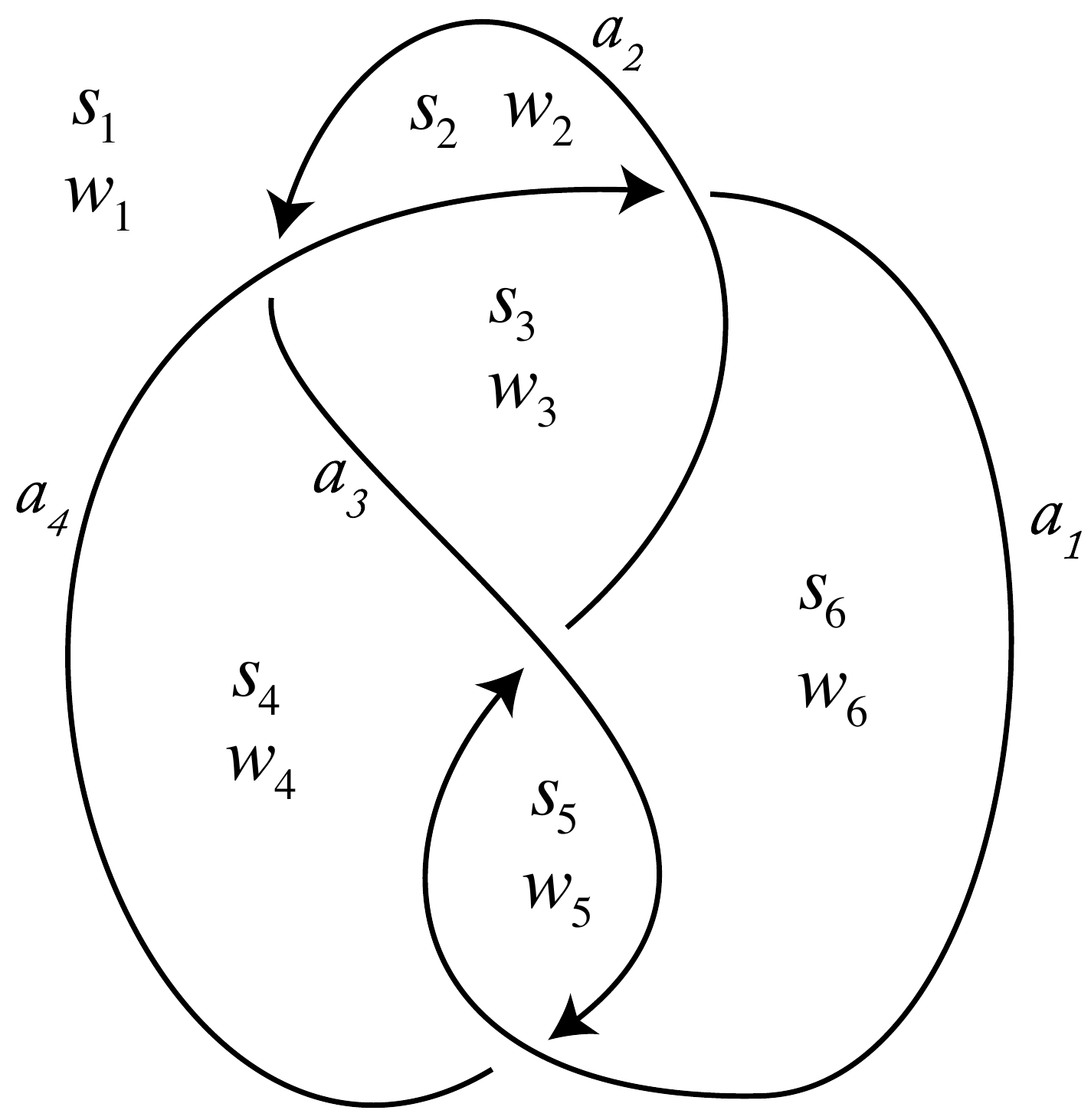}
\caption{Figure-eight knot $4_1$ with parameters}\label{example1}
\end{figure}

Let $\rho:\pi_1(4_1)\rightarrow{\rm PSL}(2,\mathbb{C})$ be the boundary-parabolic representation defined by
the arc-colors
$$a_1=\left(\begin{array}{cc}0 &t\end{array}\right),~a_2=\left(\begin{array}{cc}1 &0\end{array}\right),~
a_3=\left(\begin{array}{cc}-t &1+t\end{array}\right),~a_4=\left(\begin{array}{cc}-t &t\end{array}\right),$$
where $t$ is a solution of $t^2+t+1=0$, and let one region-color $s_1=\left(\begin{array}{cc}1 &1\end{array}\right)$.
Then the other region-colors become
\begin{align*}
s_2=\left(\begin{array}{cc}0 &1\end{array}\right),~
s_3=\left(\begin{array}{cc}-t-1 &t+2\end{array}\right),~s_4=\left(\begin{array}{cc}-2t-1 &2t+3\end{array}\right),\\
s_5=\left(\begin{array}{cc}-2t-1 &t+4\end{array}\right),~s_6=\left(\begin{array}{cc}1 &t+2\end{array}\right).
\end{align*}

The potential function $W(w_1,\ldots,w_6)$ of Figure \ref{example1} is
\begin{align*}
W=\left\{\li(\frac{w_1}{w_2})+\li(\frac{w_1}{w_4})-\li(\frac{w_1 w_3}{w_2 w_4})
-\li(\frac{w_2}{w_3})-\li(\frac{w_4}{w_3})+\frac{\pi^2}{6}-\log\frac{w_2}{w_3}\log\frac{w_4}{w_3}\right\}\\
+\left\{\li(\frac{w_3}{w_2})+\li(\frac{w_3}{w_6})-\li(\frac{w_1 w_3}{w_2 w_6})
-\li(\frac{w_2}{w_1})-\li(\frac{w_6}{w_1})+\frac{\pi^2}{6}-\log\frac{w_2}{w_1}\log\frac{w_6}{w_1}\right\}\\
+\left\{-\li(\frac{w_4}{w_3})-\li(\frac{w_4}{w_5})+\li(\frac{w_4 w_6}{w_3 w_5})
+\li(\frac{w_3}{w_6})+\li(\frac{w_5}{w_6})-\frac{\pi^2}{6}+\log\frac{w_3}{w_6}\log\frac{w_5}{w_6}\right\}\\
+\left\{-\li(\frac{w_6}{w_1})-\li(\frac{w_6}{w_5})+\li(\frac{w_4 w_6}{w_1 w_5})
+\li(\frac{w_1}{w_4})+\li(\frac{w_5}{w_4})-\frac{\pi^2}{6}+\log\frac{w_1}{w_4}\log\frac{w_5}{w_4}\right\}.
\end{align*} 

By putting $p=\left(\begin{array}{cc}2 &1\end{array}\right)$, we obtain
\begin{align}
w_1^{(0)}=\det(p,s_1)=1,~w_2^{(0)}=\det(p,s_2)=2,~w_3^{(0)}=\det(p,s_3)=3t+5,\label{sol1}\\
w_4^{(0)}=\det(p,s_4)=6t+7,~w_5^{(0)}=\det(p,s_5)=4t+9,~w_6^{(0)}=\det(p,s_6)=2t+3,\nonumber
\end{align}
and $(w_1^{(0)},\ldots,w_6^{(0)})$ becomes a solution of 
$\mathcal{I}=\{\exp(w_k\frac{\partial W}{\partial w_k})=1~|~k=1,\ldots,6\}$.
Furthermore, we obtain
\begin{equation*}
W_0(w_1^{(0)},\ldots,w_6^{(0)})\equiv i(\vol(\rho)+i\,\cs(\rho))\modulo,
\end{equation*}
and numerical calculation verifies it by
\begin{equation*}
W_0(w_1^{(0)},\ldots,w_6^{(0)})=
      \left\{\begin{array}{ll}i(2.0299...+0\,i)=i(\vol(4_1)+i\,\cs(4_1))&\text{ if }t=\frac{-1-\sqrt{3} \,i}{2}, \\
                i(-2.0299...+0\,i)=i(-\vol(4_1)+i\,\cs(4_1))&\text{ if }t=\frac{-1+\sqrt{3}\,i}{2}. \end{array}\right.
\end{equation*}

Note that the above example was already appeared in Section 3.1.\;of \cite{Cho14c}. 
From Theorem \ref{thm1}, we can easily specify the discrete faithful representation by
Figure \ref{example1} together with the solution (\ref{sol1}). (The explicit construction of the representation
can be done by applying Yoshida's construction of \cite{Tillmann13} to the five-term triangulation defined in Figure \ref{fig7}.)

Now we will apply (un-oriented) Reidemeister moves to the solution in (\ref{sol1}).
Consider the changes of the figure-eight knot diagrams in Figure \ref{mirror4_1}.

\begin{figure}[ht]
\centering
\subfigure[]{\includegraphics[scale=0.65]{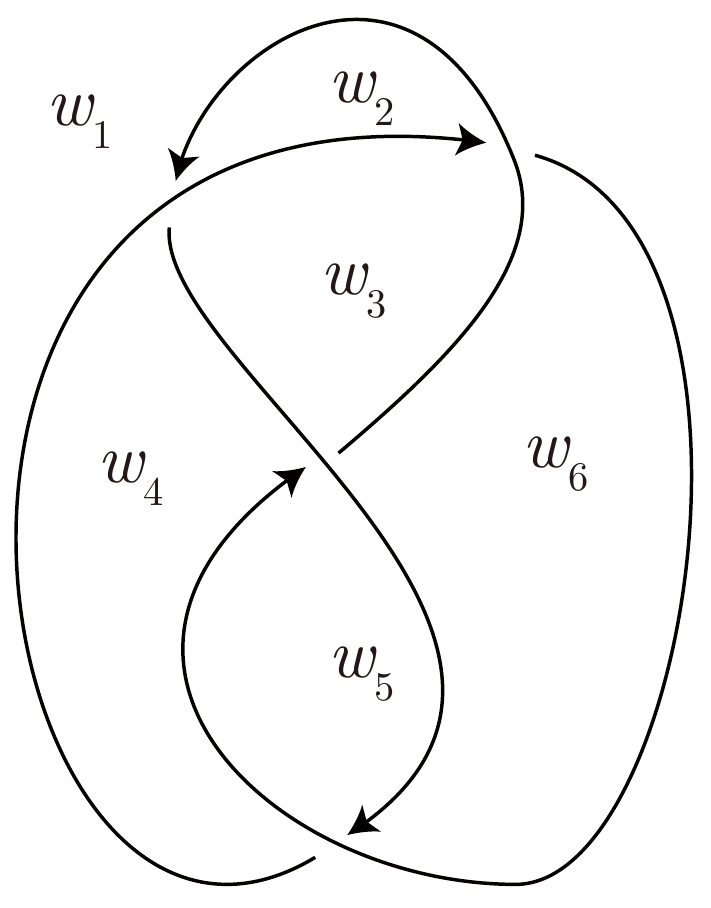}}
\subfigure[]{\includegraphics[scale=0.65]{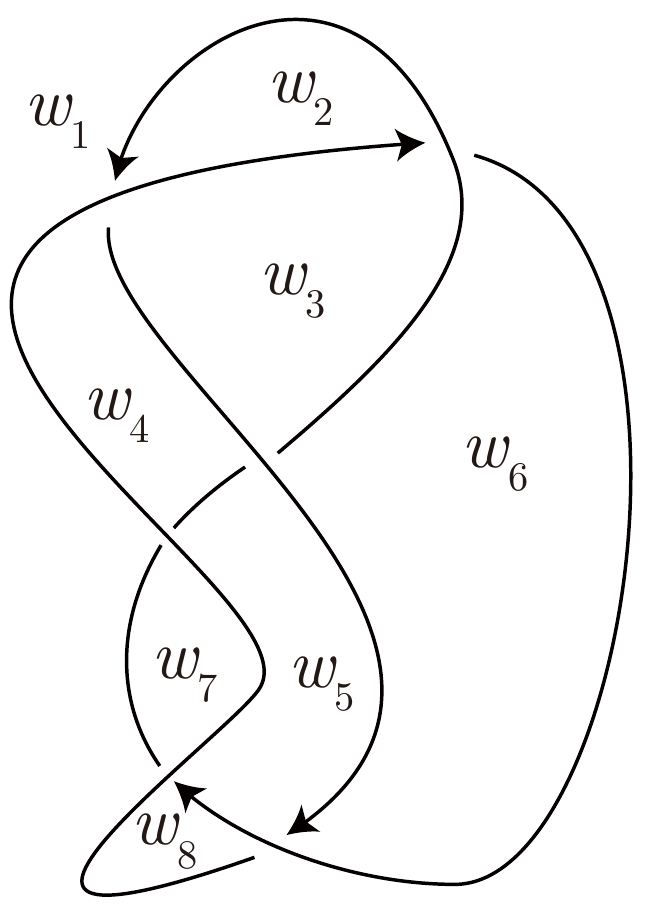}}
\subfigure[]{\includegraphics[scale=0.65]{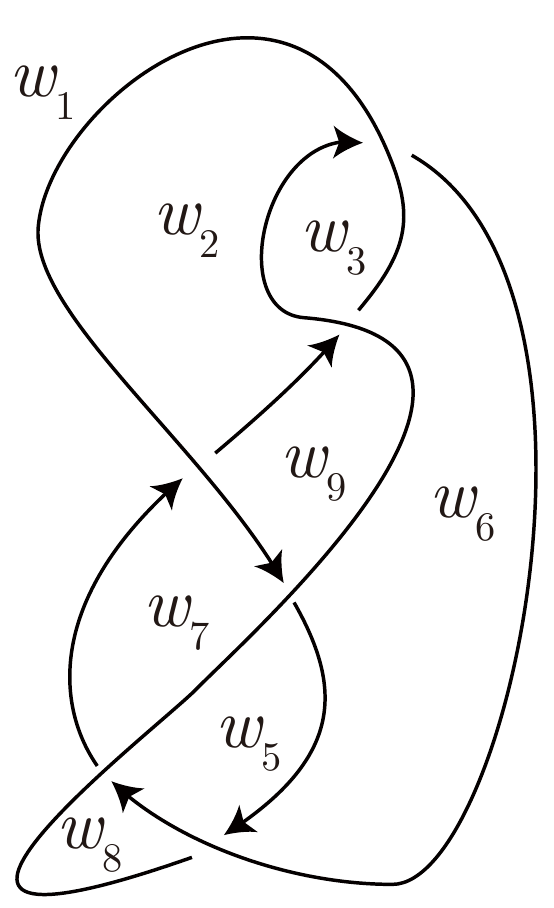}}
\subfigure[]{\includegraphics[scale=0.65]{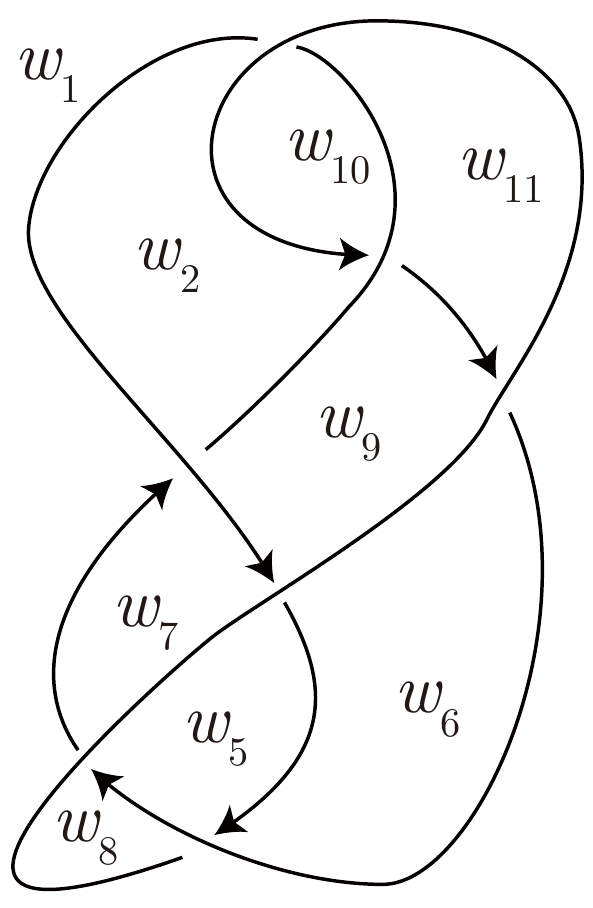}}
\subfigure[]{\includegraphics[scale=0.65]{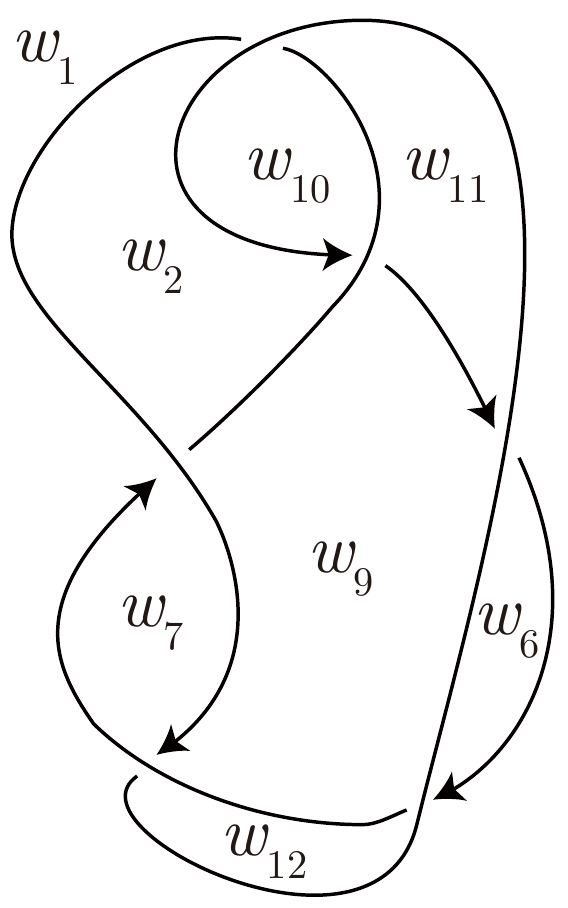}}
\subfigure[]{\includegraphics[scale=0.65]{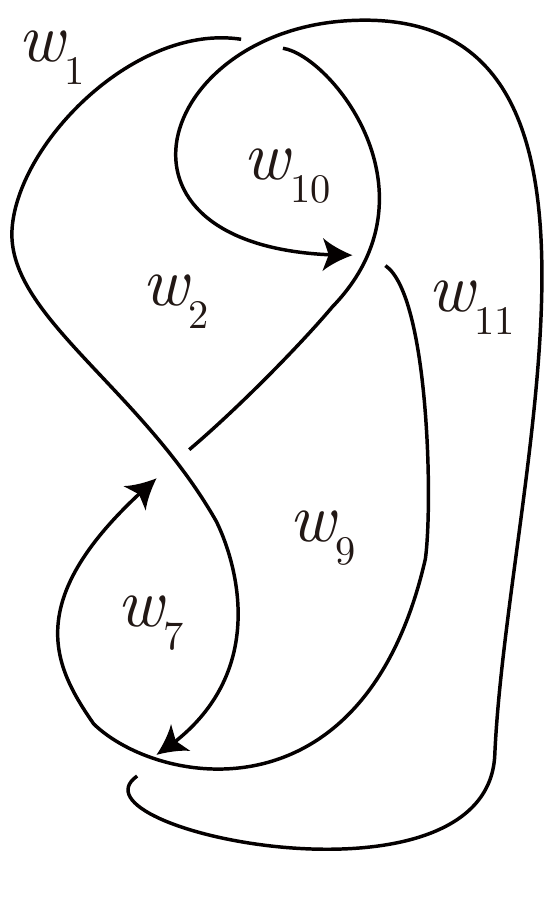}}
\subfigure[]{\includegraphics[scale=0.65]{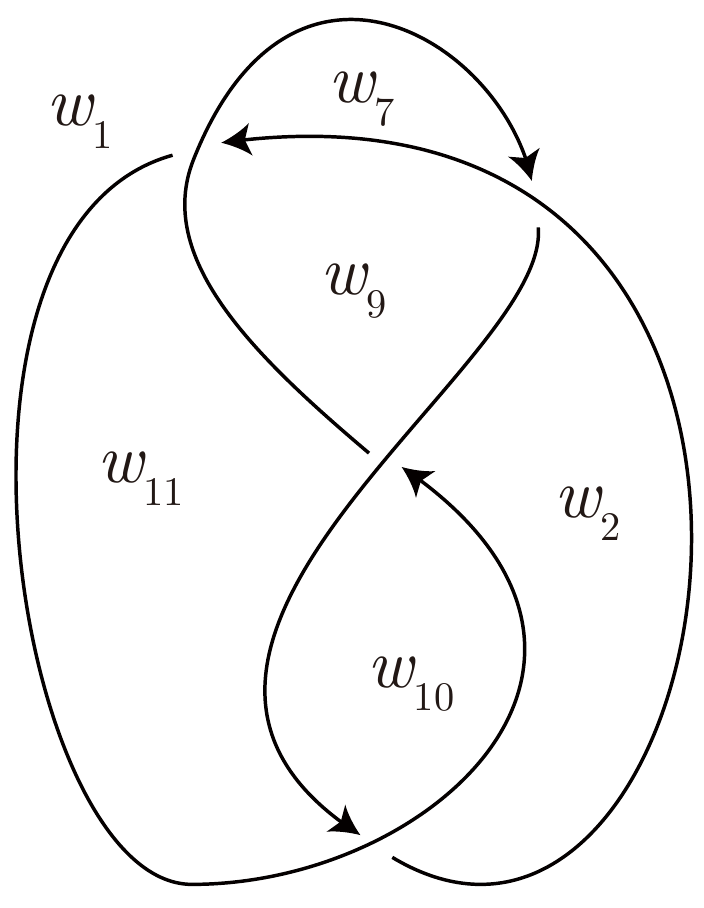}}
 \caption{Changing the figure-eight knot diagram to its mirror image}\label{mirror4_1}
\end{figure}

Note that Figure \ref{mirror4_1}(c) is obtained by the Reidemeister 3rd move,
Figures \ref{mirror4_1}(d)-(e) are obtained by the twist move defined in Figure \ref{pic18}
and Figure \ref{mirror4_1}(g) is obtained by the rotation.
Then the values of the variables are determined by
\begin{align*}
w_7^{(0)}&=\frac{w_1^{(0)}w_5^{(0)}(w_3^{(0)}-w_4^{(0)})^2-(w_1^{(0)}w_3^{(0)}-w_2^{(0)}w_4^{(0)})(w_3^{(0)}w_5^{(0)}-w_4^{(0)}w_6^{(0)})}{w_4^{(0)}(w_3^{(0)}-w_4^{(0)})^2}\\
&=\frac{6t^2+37t+36}{(3t+2)^2}=-5t-3,\\
w_8^{(0)}&=w_4^{(0)}=6t+7,\\
w_9^{(0)}&=\frac{w_4^{(0)}w_7^{(0)}-w_1^{(0)}w_5^{(0)}+w_2^{(0)}w_6^{(0)}}{w_3^{(0)}}=\frac{-30t^2-53t-24}{3t+5}=-7t-3,\\
w_{10}^{(0)}&=2w_2^{(0)}-w_3^{(0)}=-3t-1,\\
w_{11}^{(0)}&=\frac{w_3^{(0)}w_{10}^{(0)}-(w_2^{(0)})^2+w_1^{(0)}w_9^{(0)}}{w_6^{(0)}}=\frac{-9t^2-25t-12}{2t+3}=-6t-5,\\
w_{12}^{(0)}&=2w_1^{(0)}-w_8^{(0)}=-6t-5=w_{11}^{(0)}.
\end{align*}
(Here, $w_7^{(0)}$ is calculated by the partial derivative of the potential function with respect to $w_4$.)

The potential function of Figure \ref{mirror4_1}(g) becomes $-W(w_1,w_7,w_9,w_{2},w_{10},w_{11})$ and 
the following numerical calculation
\begin{align*}
-W_0(w_1^{(0)},w_7^{(0)},w_9^{(0)},w_{2}^{(0)},w_{10}^{(0)},w_{11}^{(0)})&=
      \left\{\begin{array}{ll}i(2.0299...+0\,i)&\text{ if }t=\frac{-1-\sqrt{3} \,i}{2} \\
                i(-2.0299...+0\,i)&\text{ if }t=\frac{-1+\sqrt{3}\,i}{2} \end{array}\right.\\
&=W_0(w_1^{(0)},w_2^{(0)},w_3^{(0)},w_4^{(0)},w_5^{(0)},w_6^{(0)})
\end{align*}
confirms Theorem \ref{thm1}.

\appendix
\section{Shadow-coloring induced by a solution}\label{app}

In \cite{Cho14c} and this article, we always start from a given boundary-parabolic representation $\rho:\pi_1(L)\rightarrow{\rm PSL}(2,\mathbb{C})$
and construct a solution $(w_1^{(0)},\ldots,w_n^{(0)})$ of the hyperbolicity equations $\mathcal{I}$ using (\ref{main}). 
In other words, for any representation $\rho$, we can always construct a solution that induces $\rho$. Therefore natural question arises that 
whether any {essential} solution of $\mathcal{I}$ can be constructed by the formula (\ref{main}) of certain shadow-coloring.
This question is important because, if it is true, then any essential solution of $\mathcal{I}$ is governed by the Reidemeister transformations.
Furthermore, we can characterize the solutions of $\mathcal{I}$ by the choices of certain shadow-coloring. 

\begin{theorem} For any {essential} solution 
$\bold{w}^{(0)}=(w_1^{(0)},\ldots,w_n^{(0)})$ of $\mathcal{I}$, 
there exist an arc-coloring $\{a_1,\ldots,a_r\}$,
a region coloring $\{s_1,\ldots,s_n\}$ and an element $p\in\mathbb{C}^2\backslash\{0\}$ satisfying 
\begin{equation*}
w_k^{(0)}=\det(p,s_k),
\end{equation*}
for $k=1,\ldots,n$.
\end{theorem}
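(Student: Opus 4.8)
The plan is to run the construction of Proposition \ref{pro2} \emph{in reverse}: rather than starting from a representation and producing a solution, I start from the given essential solution $\bold{w}^{(0)}$, reconstruct the geometric data it already determines, and read off an arc-coloring, a region-coloring and an element $p$ from that data. The point I would keep in mind throughout is that one cannot shortcut this by a uniqueness argument: distinct region-colorings induced by the same representation yield genuinely distinct solutions $(\det(p,s_k))_k$, so $\bold{w}^{(0)}$ is \emph{not} determined up to scaling by $\rho_{\bold{w}^{(0)}}$. Consequently I must work with the developing map of the given solution itself, not merely with its holonomy.

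First I would exploit that $\bold{w}^{(0)}$ solves $\mathcal{I}$. By Proposition \ref{pro21} the ratios of the values $w_k^{(0)}$ are exactly the shape parameters of the five-term triangulation of Figure \ref{fig7}, so they satisfy all the gluing equations, and by Lemma \ref{ntri} the associated structure is boundary-parabolic. Applying Yoshida's construction (\cite{Tillmann13}, as in Section \ref{sec2}) then produces a developing map with holonomy $\rho:=\rho_{\bold{w}^{(0)}}$, while Proposition \ref{pro} (essentialness) keeps all tetrahedra non-degenerate so that the relevant ideal points are distinct. From $\rho$ I obtain the arc-coloring $\{a_1,\ldots,a_r\}$ as the meridian images, fixing representatives in $\mathbb{C}^2\backslash\{0\}$. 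The developing map assigns an ideal point $z_k\in\mathbb{CP}^1$ to each region $k$, together with a distinguished point $z_p$ serving as the image of the $\infty$-vertex; the face-pairing structure forces the region-coloring compatibility $z_{k'}=\widehat{a}(z_k)$ across an arc colored $a$, with $z_{a_k}=h(a_k)$. I would then pick any lift $s_1\in\mathbb{C}^2\backslash\{0\}$ of $z_1$ and any lift $p\in\mathbb{C}^2\backslash\{0\}$ of $z_p$, and \emph{define} $s_k:=s_1 M_k$, where $M_k$ is the word in the arc-color matrices carrying region $1$ to region $k$. This is well defined because an arc-coloring determines the region-coloring (Section \ref{sec3}), it satisfies the exact quandle relations, and $h(s_k)=z_k$; hence $\{a_k\},\{s_k\}$ is a bona fide shadow-coloring.

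The final step matches the determinants to the given values. For a pair of regions adjacent across an arc colored $a_k$, the shape parameter assigned to the corresponding edge in Section \ref{sec33} equals both the ratio of the two region-variables of $\bold{w}^{(0)}$ and the cross-ratio of the four developing images of that tetrahedron. Substituting the lifts $s_k,p,a_k$ into the cross-ratio–determinant identity of Section \ref{sec3} and simplifying with $\det(a*c,b*c)=\det(a,b)$ and the quandle identity $a_k*a_k=a_k$ (so that $a_k A_{a_k}=a_k$) reproduces verbatim the computation in the proof of Proposition \ref{pro2}, and yields $\det(p,s_k)/\det(p,s_{k'})=w_k^{(0)}/w_{k'}^{(0)}$ for every adjacent pair. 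Therefore $\det(p,s_k)/w_k^{(0)}$ is constant across adjacencies, and since the region-adjacency graph of a connected diagram is connected it equals a single nonzero constant $c$. Replacing $p$ by $p/c$, still a nonzero element of $\mathbb{C}^2$, gives $w_k^{(0)}=\det(p,s_k)$ for all $k$.

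I expect the main obstacle to lie in the middle step: rigorously identifying the ideal points of the developing map of the \emph{given} solution with the Hopf images of a region-coloring, i.e.\ verifying $z_{k'}=\widehat{a}(z_k)$ and $z_{a_k}=h(a_k)$. This amounts to reversing the logical direction of Section \ref{sec22}, where the developing map was \emph{built} from a shadow-coloring rather than \emph{recovered} from abstract shape parameters, and it is exactly here that the solvability of an otherwise over-determined system is forced by $\bold{w}^{(0)}$ lying in $\mathcal{I}$. Once the developing images are known to assemble into a shadow-coloring, the determinant computation is the one already performed for Proposition \ref{pro2}, and the only genuinely new ingredient is the global rescaling of $p$ by $c$.
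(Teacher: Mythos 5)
Your overall strategy---run the construction of Proposition \ref{pro2} in reverse, compare cross-ratios with determinants, and fix the global scale at the end---is the same as the paper's, and your endgame is sound: the cross-ratio--determinant identity does give $\det(p,s_k)/\det(p,s_{k'})=w_k^{(0)}/w_{k'}^{(0)}$ across adjacencies once the developing images are known to form a shadow-coloring pattern, connectivity of the region-adjacency graph then makes $\det(p,s_k)/w_k^{(0)}$ a single constant $c$, and replacing $p$ by $p/c$ is legitimate by linearity of $\det$ in its first slot (a harmless variant of the paper's up-front normalization). But the step you yourself flag as ``the main obstacle'' is precisely the mathematical content of the theorem, and your proposal does not close it. You assert that the developing map $D_1$ of the given solution assigns to each region an ideal point $z_k$ with $z_{k'}=\widehat{a}(z_k)$ across an arc colored $a$, together with a single distinguished point $z_p$ for the other end. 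The points $z_k$ and $z_p$, however, are developing images of lifts of the two \emph{trivial} ends $\pm\infty$ of $\mathbb{S}^3\backslash(L\cup\{\pm\infty\})$, and at a trivial end nothing is forced by the holonomy: the peripheral holonomy there is trivial, so equivariance of $D_1$ under face-pairings does not pin the images down, and there is no a priori reason that all lifts sitting over one region develop coherently into a single $\widehat{a}$-orbit, nor that the second trivial end develops to the $\rho$-orbit of one point $p$. (For the link vertices the identification with $h(a_k)$ does follow from parabolicity, as you note---but those are the nontrivial ends, and the whole difficulty lives at the trivial ones.) As written, the existence of the region-coloring pattern inside $D_1$ is equivalent to the statement being proved, so the argument is circular at this point.

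The paper closes exactly this gap by an ingredient absent from your proposal. It first \emph{constructs} a candidate shadow-coloring, choosing $h(s_1)=\left(\alpha_l\beta_l-1+\frac{w_2^{(0)}}{w_1^{(0)}}\right)/\beta_l^2$ specifically so that the induced scalars satisfy $x_1=w_1^{(0)}$ and $x_2=w_2^{(0)}$; this makes the shape parameters of the two tetrahedra singled out in Figures \ref{fig7} and \ref{fig23} agree between $D_1$ and the developing map $D_2$ of the candidate coloring, so the two maps can be normalized to agree at lifts of the trivial-end vertices of that one octahedron, as in (\ref{app3}) and (\ref{app4}). It then invokes the uniqueness theorem for developing maps with the same holonomy (THEOREM 4.11 of \cite{Zickert09}), which forces $D_1=D_2$ at all points corresponding to \emph{nontrivial} ends (these are parabolic fixed points determined by $\rho$); combined with the arranged agreement at the trivial ends this yields $D_1=D_2$ everywhere, hence equality of all shape parameters, hence $x_k/x_m=w_k^{(0)}/w_m^{(0)}$ for adjacent regions, and $x_k=w_k^{(0)}$ by connectivity. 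This rigidity input (or an equivalent resolution of the ``over-determined system'' you mention) is the missing idea: without it your middle step is an assertion, not a proof.
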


\begin{proof}
From the discussion above of Proposition \ref{pro21}, the solution $\bold{w}^{(0)}$ induces the boundary-parabolic representation $\rho$.
After fixing an oriented diagram $D$ of the link $L$, the representation $\rho$ induces unique arc-coloring $\{a_1,\ldots,a_r\}$ up to conjugation.
By using proper conjugation, we may assume 
$$\infty\notin\{h(a_1),\ldots,h(a_r)\}.$$ Then we define $p=\left(\begin{array}{cc}1 &0\end{array}\right)$.

The main idea of this proof is to show the following region-coloring
$$s_k=w_k^{(0)}\left(\begin{array}{cc}h(s_k) &1\end{array}\right)$$
for $k=1,\ldots,n$, is what we want. To prove rigorously, assume the regions with $s_1$ and $s_2$ are adjacent, as in Figure \ref{fig22}.

\begin{figure}[h]
\centering  \setlength{\unitlength}{0.6cm}\thicklines
\begin{picture}(6,5)  
    \put(6,4){\vector(-1,-1){4}}
    \put(1.5,2.2){$s_1$}
    \put(5,1.5){$s_2$}
    \put(6.2,4.2){$a_l=\left(\begin{array}{cc}\alpha_l &\beta_l\end{array}\right)$}
  \end{picture}
  \caption{Defining the region-color $s_1$}\label{fig22}
\end{figure}
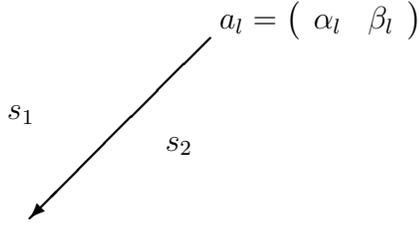

Define $s_1$ by
$$s_1=w_1^{(0)}\left(\begin{array}{cc}\left(\alpha_l \beta_l-1+\frac{w_2^{(0)}}{w_1^{(0)}}\right)/\beta_l^2 & 1 \end{array}\right).$$
Then $h(s_1)=\left(\alpha_l \beta_l-1+\frac{w_2^{(0)}}{w_1^{(0)}}\right)/\beta_l^2$. 
We can decide $h(s_2),\ldots,h(s_n)$ from $h(s_1)$ using the arc-coloring $\{a_1,\ldots,a_r\}$ together with (\ref{app1}), 
and we denote the region-colorings by
\begin{equation}\label{app2}
s_k=x_k\left(\begin{array}{cc}h(s_k) &1\end{array}\right)
\end{equation}
for $k=1,\ldots,n$. (Therefore, $x_1=w_1^{(0)}$ and $x_2,\ldots,x_n$ are uniquely determined.)
Then, from the second row of the following matrices
$$s_2=x_2\left(\begin{array}{cc}h(s_2) &1 \end{array}\right)=s_1*\alpha_l
=x_1\left(\begin{array}{cc}h(s_1) &1 \end{array}\right) \left(\begin{array}{cc}1+\alpha_l\beta_l &\beta_l^2   \\-\alpha_l^2& 1-\alpha_l\beta_l\end{array}\right),$$
we obtain 
$$x_2=x_1(\beta_l^2 h(s_1)+1-\alpha_l\beta_l)=x_1\frac{w_2^{(0)}}{w_1^{(0)}}=w_2^{(0)}.$$

Now let the developing map induced by the solution $\bold{w}^{(0)}$ be $D_1$, and the one induced by
the shadow-coloring $\{a_1,\ldots,a_r,s_1,\ldots,s_n,p\}$ together with (\ref{app2}) be $D_2$.
(The definition of the developing map we are using here is {\it Definition 4.10} from Section 4 of \cite{Zickert09}.)
Note that $D_1$ can be constructed by gluing tetrahedra with the shape parameters determined by $\bold{w}^{(0)}$, 
and $D_2$ is constructed explicitly at Figure \ref{fig06}.
(The developing map $D_2$ satisfies the condition (4.2) in {\it Proof} of THEOREM 4.11 of \cite{Zickert09}.) 
To make $D_1=D_2$,
consider the five-term triangulation defined in Section \ref{sec2} and see Figure \ref{fig23}.
\begin{figure}[H]
\centering
  \subfigure[Positive crossing $j$]
  {\begin{picture}(6,4)  
  \setlength{\unitlength}{0.8cm}\thicklines
        \put(4,2){\arc[5](-1,1){180}}
    \put(6,4){\vector(-1,-1){4}}
    \put(2,4){\line(1,-1){1.8}}
    \put(4.2,1.8){\line(1,-1){1.8}}
    \put(3.7,0.2){$w_2$}
    \put(1.7,2){$w_1$}
    \put(2.2,0.9){${\rm A}_j$}
    \put(5.3,0.9){${\rm B}_j$}
    \put(2.2,2.9){${\rm D}_j$}
    \put(4.3,2){$j$}
  \end{picture}}
  \subfigure[Negative crossing $j$]
  {\begin{picture}(6,4)
  \setlength{\unitlength}{0.8cm}\thicklines
        \put(4,2){\arc[5](-1,1){180}}
    \put(2,4){\line(1,-1){4}}
   \put(6,4){\line(-1,-1){1.8}}
   \put(3.8,1.8){\vector(-1,-1){1.8}}
    \put(3.7,0.2){$w_2$}
    \put(1.7,2){$w_1$}
    \put(2.2,0.9){${\rm D}_j$}
    \put(5.3,0.9){${\rm A}_j$}
    \put(2.2,2.9){${\rm C}_j$}
    \put(4.3,2){$j$}
  \end{picture}}
  \caption{Figure \ref{fig22} together with the crossing $j$}\label{fig23}
\end{figure}
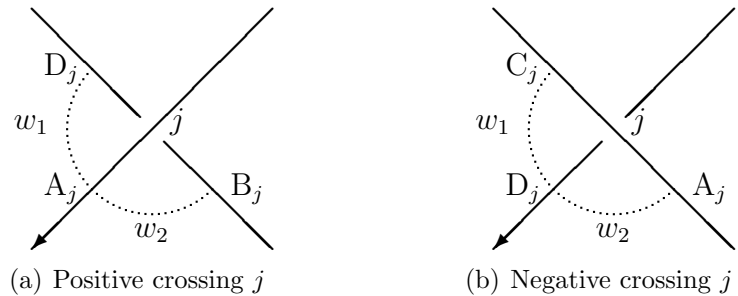

We also consider the octahedron ${\rm A}_j{\rm B}_j{\rm C}_j{\rm D}_j{\rm E}_j{\rm F}_j$ at the crossing $j$ as in Figure \ref{fig7}. 
(Note that Figure \ref{fig23} is the left-hand side of Figure \ref{fig7}.)
The property $x_1=w_1^{(0)}$ and $x_2=w_2^{(0)}$ imply that, in case of Figures \ref{fig7}(a) and \ref{fig23}(a), 
the shape parameter of $\Delta{\rm D}_j{\rm B}_j{\rm F}_j{\rm A}_j$ assigned to ${\rm F}_j{\rm A}_j$ is $\frac{w_1^{(0)}}{w_2^{(0)}}$ and
the shape parameter of $\Delta{\rm D}_j{\rm E}_j{\rm A}_j{\rm C}_j$ assigned to ${\rm D}_j{\rm E}_j$ is $\frac{w_2^{(0)}}{w_1^{(0)}}$.
These shape parameters coincide with the shape parameters determined by the solution $\bold{w}^{(0)}$.
Hence, for the lifts $\widetilde{{\rm A}}_j$, $\widetilde{{\rm B}}_j$, ..., $\widetilde{{\rm F}}_j$ of the vertices,
we can put 
\begin{equation}\label{app3}
D_1(\widetilde{{\rm D}}_j)=D_2(\widetilde{{\rm D}}_j),~D_1(\widetilde{{\rm B}}_j)=D_2(\widetilde{{\rm B}}_j),~
D_1(\widetilde{{\rm F}}_j)=D_2(\widetilde{{\rm F}}_j), D_1(\widetilde{{\rm A}}_j)=D_2(\widetilde{{\rm A}}_j)
\end{equation}
in the case of Figures \ref{fig7}(a) and \ref{fig23}(a), and put 
\begin{equation}\label{app4}
D_1(\widetilde{{\rm D}}_j)=D_2(\widetilde{{\rm D}}_j),~ D_1(\widetilde{{\rm E}}_j)=D_2(\widetilde{{\rm E}}_j),~
D_1(\widetilde{{\rm A}}_j)=D_2(\widetilde{{\rm A}}_j),~ D_1(\widetilde{{\rm C}}_j)=D_2(\widetilde{{\rm C}}_j)
\end{equation}
in the case of Figures \ref{fig7}(b) and \ref{fig23}(b).

Note that the developing maps $D_1$ and $D_2$ are defined from the same representation $\rho$. Therefore,
from the uniqueness theorem of the developing map in THEOREM 4.11 of \cite{Zickert09},
$D_1$ and $D_2$ agree on the ideal points corresponding to the {\it nontrivial} ends. 
(See {\it Definition 4.3} of \cite{Zickert09} for the definitions of the nontrivial and trivial ends.)
Furthermore, the five-term triangulation we are using has two trivial ends 
and we denoted the corresponding points by $\pm\infty$ in Section \ref{sec2}.
At the octahedron in Figure \ref{fig7}, the ideal points $\widetilde{{\rm A}}_j$ and $\widetilde{{\rm C}}_j$
corresponds to $\infty$ and $\widetilde{{\rm B}}_j$ and $\widetilde{{\rm D}}_j$
corresponds to $-\infty$. From (\ref{app3}) and (\ref{app4}), the two developing maps $D_1$ and $D_2$ coincide
not only at the points corresponding to the nontrivial ends, but also at the points corresponding to the trivial ends.
Therefore, we obtain $D_1=D_2$. 

The coincidence of the two developing maps implies the shape parameters of the tetrahedra in the five-term triangulation coincide everywhere. Therefore, from (\ref{main}), we obtain
$$\frac{x_k}{x_m}=\frac{w_k^{(0)}}{w_m^{(0)}}$$
for any adjacent regions with $w_k$ and $w_m$. (Note that $\left(\frac{x_k}{x_m}\right)^{\pm}$ is the shape parameter
determined by the formula (\ref{main}) and the developing map $D_2$.) We already know $x_1=w_1^{(0)}$ and $x_2=w_2^{(0)}$,
so we obtain $x_k=w_k^{(0)}$ for $k=1,\ldots,n$, and the shadow-coloring $\{a_1,\ldots,a_r,s_1,\ldots,s_n,p\}$ is what we want.

\end{proof}

\vspace{5mm}
\begin{ack}
The first author is supported by Basic Science Research Program through the National Research Foundation of Korea funded by the Ministry of Education (NRF-2015R1C1A1A02037540). The appendix is motivated by the discussion with Christian Zickert and the authors appreciate him. {The authors also show gratitude to the anonymous reviewer 
who suggested better proof of Lemma \ref{lem} than the original.}
\end{ack}

{
\begin{flushleft}
  Busan National University of Education\\ Busan 47503, Republic of Korea\\
E-mail: dol0425@bnue.ac.kr\\
  \vspace{0.4cm}
Department of Mathematics,
Faculty of Science and Engineering,
Waseda University\\
3-4-1 Ohkumo, Shinjuku-ku, Tokyo
169-855, Japan\\
E-mail: murakami@waseda.jp\\
\end{flushleft}}

\end{document}